\def\R{\mathbb{R}}
\providecommand{\abs}[1]{\left\lvert#1\right\rvert}
\providecommand{\set}[1]{\left\{#1\right\}}
\def\cover{{\cal N}}
\def\pack{{\cal P}}
\def\N{{\mathbb N}}
\def\R{{\mathbb R}}
\def\I{{\cal I}}
\def\J{{\cal J}}
\def\S{{\cal S}}
\def\T{{\cal T}}
\def\d{{\rm d}}
\def\define{:=}
\DeclareMathOperator\diameter{diam}
\DeclareMathOperator\gen{gen}
\newtheorem{definition}{Definition}[section]
\newtheorem{lemma}[definition]{Lemma}
\newtheorem{theorem}[definition]{Theorem}
\newtheorem{corollary}[definition]{Corollary}
\newtheorem{proposition}[definition]{Proposition}
\def\words#1{\quad\hbox{#1}\quad}
\def\wwords#1{\qquad\hbox{#1}\qquad}
\begin{document}

  \title{Equi-homogeneity, Assouad Dimension and Non-autonomous Dynamics}

\author{
	Alexander M. Henderson\\
		University of California, Riverside, CA 92521, USA\\
		\texttt{ahend006@ucr.edu}
	\and
	Eric J. Olson\thanks{EJO was partially supported by EPSRC grant EP/G007470/1 while at Warwick on sabbatical leave from University of Nevada Reno.}\\
		University of Nevada, Reno, NV, 89507, USA\\
		\texttt{ejolson@unr.edu}
  	\and
  	James C. Robinson\thanks{JCR was partially supported by an EPSRC Leadership Fellowship, grant EP/G007470/1.}\\
  		University of Warwick, Coventry, UK, CV4 7AL\\
  		\texttt{j.c.robinson@warwick.ac.uk}
  	\and
  	Nicholas Sharples\thanks{NS was partially supported by an EPSRC Career Acceleration Fellowship, grant EP/I004165/1, awarded to Martin Rasmussen, whose support is gratefully acknowledged.}\\
  		Middlesex University, London, UK, NW4 4BT\\
  		\texttt{nicholas.sharples@gmail.com}
}

\maketitle

\begin{abstract}
We show that self-similar sets arising from iterated function systems that satisfy the Moran open-set condition, a canonical class of fractal sets, are `equi-homogeneous'. This is a regularity property that, roughly speaking, means that at each fixed length-scale any two neighbourhoods of the set have covers of approximately equal cardinality.

Self-similar sets are notable in that they are Ahlfors-David regular, which implies that their Assouad and box-counting dimensions coincide. More generally, attractors of non-autonomous iterated functions systems (where maps are allowed to vary between iterations) can have distinct Assouad and box-counting dimensions. Consequently the familiar notion of Ahlfors-David regularity is too strong to be useful in the analysis of this important class of sets, which include generalised Cantor sets and possess different dimensional behaviour at different length-scales.

We further develop the theory of equi-homogeneity showing that it is a weaker property than Ahlfors-David regularity and distinct from any previously defined notion of dimensional equivalence. However, we show that if the upper and lower box-counting dimensions of an equi-homogeneous set are equal and `attained' in a sense we make precise then the lower Assouad, Hausdorff, packing, lower box-counting, upper box-counting and Assouad dimensions coincide.

Our main results provide conditions under which the attractor of a non-autonomous iterated function system is equi-homogeneous and we use this to compute the Assouad dimension of a certain class of these highly non-trivial sets.
\end{abstract}

\section{Introduction}
In this paper we examine in detail the notion of `equi-homogeneity', which is a regularity property of sets introduced by Olson, Robinson \& Sharples \cite{ORS} to study the Assouad dimension of products of `generalised Cantor sets' i.e. Cantor sets in which we allow the portion removed to vary at each stage of the construction. Generalised Cantor sets provide simple examples of equi-homogeneous sets $C\subset \R$ whose lower box-counting, upper box-counting, and Assouad dimensions can take arbitrary values satisfying
\[
\dim_{\rm LB}C\leq \dim_{\rm B}C\leq \dim_{\rm A}C.
\]

In our previous paper \cite{ORS} we demonstrated that a large class of `homogeneous' Moran sets, which include the generalised Cantor sets, are equi-homogeneous. Roughly, the equi-homogeneity property means that at each length-scale the number of balls required in each `local cover' of the set is equal up to some constant factor that is uniform across all length-scales.
For example, it can be shown that the largest local cover of a generalised Cantor set has cardinality at most 6 times that of the smallest local cover at the same length scale (see Olson, Robinson \& Sharples \cite{ORS}).

Generalised Cantor sets are also examples of attractors of non-autonomous iterated function systems. In this paper we extend the regularity results of Olson, Robinson \& Sharples \cite{ORS} to a natural class of attractors of both autonomous and non-autonomous iterated functions systems of contracting similarities. These regularity results are useful as pullback attractors can exhibit dimensionally different behaviour at different length scales and are therefore not, in general, Ahlfors-David regular (discussed in Section 3. See also Heinonen \cite{Heinonen} or Mackay \& Tyson \cite{MikeTyson}).

We discuss how equi-homogeneity relates to other notions of regularity of sets. In particular, we demonstrate that equi-homogeneity is weaker than Ahlfors-David regularity (recalled in Definition \ref{definition - Ahlfors-David}, below), which is the content of Theorem \ref{theorem - ahlfors implies equih}. In this theorem we also prove that the lower Assouad, Hausdorff, packing, lower box-counting, upper box-counting, and Assouad dimensions coincide for Ahlfors-David regular sets. A weaker version of this result (omitting the lower Assouad dimension equality) is regarded as mathematical ``folklore'' (this result is stated in Corollary 3.2 of Farkas and Fraser \cite{FarkasFraser14} and Proposition 2.1 of Tyson \cite{Tyson08}, and essentially follows from (6.6) of Luukkainen \cite{Luuk}).

We further demonstrate that the equi-homogeneity property is distinct from any previously defined notion of dimensional equivalence: the generalised Cantor sets provide examples of equi-homogeneous sets with unequal dimensions (see Olson, Robinson and Sharples \cite{ORS}). Conversely, in Proposition \ref{proposition - equal dimension not equih} we give an example of a set that is not equi-homogeneous yet has coinciding dimensions.

These results establish equi-homogeneity as a widely applicable and useful notion of regularity for fractals and particularly for attractors of iterated functions systems. In contrast, other notions of regularity such as Ahlfors-David regularity or dimensional equality are too restrictive in these contexts as even simple examples of generalised Cantor sets may not have these regularity properties.
 
This paper is organised as follows: in the remainder of the introduction we recall the definitions and properties of pullback attractors and generalised Cantor sets required for our main results, which we summarise at the end of this section.
In Section 2 we recall the notions of dimension and regularity that we will use in the remainder. In Section 3 we define equi-homogeneity and discuss the relationship between various notions of dimension, equi-homogeneity, and other regularity properties of sets. In Section 4 we establish existence and uniqueness results for pullback attractors of non-autonomous iterated function systems before proving our main results that a large class of these pullback attractors are equi-homogeneous.

\subsection{Pullback attractors}
Pullback attractors (see Carvalho, Langa \& Robinson~\cite{Carvalho2013}, Cheban et al. \cite{Cheban2002}, Kloeden \cite{Kloedendifference}, \cite{Kloedensemidynamical}, Kloeden \& Rasmussen \cite{KR}, Kloeden \& Stonier \cite{KloedenStonier1998}, Schamlfu\ss\ \cite{Schmalfuss1992}, for example) were introduced to characterise the possible states at time $t$ of a non-autonomous dissipative continuous dynamical system once all previous initial conditions have been forgotten infinitely far in the past.
Given an initial value problem of the form
\[
	\frac{\d u}{\d t} = f(t,u),\qquad
	u(t_0)=u_0,
\]
define a semi-process $S(t,t_0)$ for $t\ge t_0$ that maps initial values $u_0$ to their subsequent time evolution by
\[
S(t,t_0)(u_0)=u(t)\wwords{for} t\ge t_0.
\]
The pullback attractor is the unique collection of uniformly bounded compact sets $A^t$ for $t\in\R$ such that $A^t=S(t,t_0)A^{t_0}$ for all $t\ge t_0$ and
\[
\rho_H(S(t,t_0)(B),A^t)\to 0\wwords{as} t_0\to-\infty
\]
for all bounded sets $B$. Here $\rho_H(X,Y)$ is the Hausdorff semi-distance
\[
	\rho_H(X,Y)=\sup\limits_{x\in X} \inf\limits_{y\in Y} |x-y|.
\]

It is straightforward to adapt the idea of a pullback attractor to study iterated function systems whose maps change at each step in the iteration. A further generalisation, in which the compositions of maps in a iterated function system are indexed by an infinite tree, arises, for example, from the use of the squeezing property to estimate the upper box-counting dimension of the global attractor of autonomous dissipative continuous dynamical system system (see Eden et al.\ \cite{EFNT}).
In order to keep our notation simple and our presentation self-contained we do not consider iterated function systems indexed by trees here.

To set notation for the rest of this paper we now describe our non-autonomous iterated function systems.
For each $i\in\N$ let $f_i\colon\R^d\to\R^d$ be a contraction with ratio $\sigma_i\in(0,1)$. Thus
\begin{equation}\label{contract}
	|f_i(x)-f_i(y)|\le \sigma_i |x-y|
\wwords{for all} i\in\N.
\end{equation}
We say $f_i$ is a similarity when the above inequality is, in fact, an equality.
For each $k\in\N$ let $\I_k\subset\N$ be an index set with ${\rm card}\left(\I_{k}\right)<\infty$.

Given $B\subseteq\R^d$ and $k<l$ define
\[
	\S^{k,l}(B)=\S^k\circ\ldots\circ\S^{l-1}(B)
\wwords{and}
	\S^{k,k}(B)=B
\]
where $\S^k(B)=\bigcup_{i\in\I_{k+1}} f_i(B).$
We note that $\S$ has the process structure
\[
	\S^{k,l}\circ\S^{l,m}=\S^{k,m}
\wwords{for}
	k\le l\le m.
\]
In particular, $\S^{k,l}$ is the discrete-time analogue of the continuous process operator $S(t,t_0)$, with the identification $t=-k$ and $t_0=-l$.

Since pullback attractors are obtained in the limit $t_0\to-\infty$, the switching of signs between $l$ and $t_0$ allows us the convenience of working with positive indices throughout.
This change of sign also results in a notation that is more consistent with 
the natural notation for autonomous iterated function systems, where the functions are the same at every step.

\begin{definition}\label{definition - pullback attractor}
A pullback attractor of a non-autonomous iterated function system is a collection of sets $\set{F^k}$ for $k\in\N_0=\N\cup\{0\}$ such that
\begin{enumerate}
\item each $F^k$ is compact and uniformly bounded, \label{pullback attractor property 1}
\item the collection $\set{F^k}$ is invariant, in the sense that $ F^k=\S^k( F^{k+1})$ \label{pullback attractor property 2}
holds, and
\item $\rho_H(\S^{k,l}(B), F^k)\to 0$ as $l\to\infty$ for every
bounded set $B\subset\R^d$. \label{pullback attractor property 3}
\end{enumerate}
\end{definition}
It immediately follows from the definition that a pullback attractor, if it exists, is unique (Theorem \ref{unique}).

We note that the requirement that the sets $\{F_k\}$ are uniformly bounded is not normally part of the definition of the continuous-time pullback attractor (see Carvalho, Langa \& Robinson \cite{Carvalho2013}), but in our setting this restriction is both natural and convenient. Without such an assumption the pullback attractor need not attract itself, and then an additional property (such as minimality) is required to ensure uniqueness. Even within the continuous-time setting attractors that are uniformly bounded `in the past' (for all $t\le t_0$ for each $t_0\in\R$) are convenient to avoid various possible pathologies, and this corresponds to uniform boundedness for our iterated function systems which are only defined for indices that correspond to $t\le0$.

Reasonable pullback attractors result when we impose some separation property on the iterated function system (for the autonomous case see, for example, Falconer \cite{BkFalconer14} pp. 139, or Hutchinson \cite{Hutch}). The following condition is a natural generalisation of the familiar Moran open-set condition (see Section \ref{IFS1}).

\begin{definition}\label{definition - generalised MOSC}
A non-autonomous iterated function system satisfies the \emph{generalised Moran open-set condition} if there exists a uniformly bounded sequence of non-empty open sets $U^k\subset\R^d$ for $k\in\N_0$ such
that 
\begin{enumerate}
\item[(i)] $S^k(U^{k+1})\subseteq U^k$;
\item[(ii)] $f_i(U^k)\cap f_j(U^k)=\emptyset$ for $i,j\in\I_k$ such that $i\neq j$; and
\item[(iii)] $\lambda(U^k)\ge \epsilon_0>0$ for all $k\in\N_0$, where $\lambda$ is the $d$-dimensional Lebesgue measure.
\end{enumerate}
\end{definition}
Our main results establish conditions under which pullback attractors satisfying the generalised Moran open-set condition are equi-homogeneous.

\subsection{Generalised Cantor sets}\label{GCS}

The generalised Cantor sets studied in Robinson \& Sharples \cite{RobinsonSharples13RAEX} and Olson, Robinson \& Sharples \cite{ORS} are illustrative examples of pullback attractors.
These sets are defined as follows.
For $\lambda \in (0, 1/2)$ let the application of ${\rm gen}_\lambda$ to a disjoint set of compact intervals be the procedure in which the open middle $1-2\lambda$ proportion of each interval is removed.
Given $c_n\in (0,1/2)$ for all $n\in\N$, the generalised Cantor set $C$ generated by $\set{c_{n}}$ is given by
\[
C={\textstyle \bigcap_{n=1}^\infty} C_n
\words{where}
	C_{n+1}={\rm gen}_{c_{n+1}} C_n
\words{and}
	C_0=[0,1].
\]

By repeatedly taking the left shift of the sequence $\set{c_{n}}$ we produce a countable family of generalised Cantor sets: for each $k\in\mathbb{N}_{0}$ the set $C^{k}$ is the generalised Cantor set generated by the sequence $\set{c_{n+k}}_{n\in\mathbb{N}}$. This family of Cantor sets is a straightforward example of a pullback attractor.

\begin{lemma}
Let $\set{c_{n}}_{n\in\mathbb{N}}$ be a sequence with $c_{n}\in\left(0,1/2\right)$. The collection of generalised Cantor sets $\set{C^{k}}$ for $k\in\mathbb{N}_{0}$ is the pullback attractor of the non-autonomous iterated function system given by 
\begin{equation}\label{gencant}
	f_{2k-1}(x)=c_k x,\quad f_{2k}(x)=c_k x + 1-c_k
\end{equation}
with $\I_k=\{2k-1,2k\}$ for each $k\in\N$.
\end{lemma}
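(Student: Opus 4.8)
The plan is to verify directly that, setting $F^k=C^k$, the collection $\set{C^k}$ satisfies the three defining properties of a pullback attractor in Definition \ref{definition - pullback attractor}. Property (\ref{pullback attractor property 1}) is immediate: each $C^k$ is an intersection of finite unions of closed intervals, hence compact, and every $C^k$ lies in $[0,1]$, so the family is uniformly bounded.

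For the invariance property (\ref{pullback attractor property 2}) the key observation is that a single step $\S^k$ of the iterated function system reproduces exactly one generation of the Cantor construction. Writing out $\S^k$ using \eqref{gencant} and $\I_{k+1}=\set{2k+1,2k+2}$ gives
\[
\S^k(B)=f_{2k+1}(B)\cup f_{2k+2}(B)=c_{k+1}B\;\cup\;\bigl(c_{k+1}B+1-c_{k+1}\bigr),
\]
which, when $B\subseteq[0,1]$, places a scaled copy of $B$ in each of the two end-intervals $[0,c_{k+1}]$ and $[1-c_{k+1},1]$ of $\gen_{c_{k+1}}[0,1]$. Since $C^k$ is generated by $\set{c_{n+k}}_{n\in\N}$, its first generation removes the open middle $(1-2c_{k+1})$-proportion of $[0,1]$, and the part of $C^k$ lying in the left end-interval is precisely $c_{k+1}C^{k+1}$ (the subsequent removals on $[0,c_{k+1}]$ use the proportions $c_{k+2},c_{k+3},\dots$, which are exactly those generating $C^{k+1}$), with the analogous statement on the right. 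Hence $C^k=c_{k+1}C^{k+1}\cup(c_{k+1}C^{k+1}+1-c_{k+1})=\S^k(C^{k+1})$, as required.

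For the attraction property (\ref{pullback attractor property 3}) I would first establish by induction on $l-k$ that $\S^{k,l}([0,1])=C^k_{l-k}$, the $(l-k)$-th generation in the construction of $C^k$; the base case $l=k+1$ is the computation above, and the inductive step uses $\S^{k,l}=\S^k\circ\S^{k+1,l}$ together with the same self-similar decomposition. Because the longest interval in $C^k_{n}$ has length $\prod_{j=1}^{n}c_{k+j}\le 2^{-n}$, the generations converge to $C^k$, giving $\rho_H(\S^{k,l}([0,1]),C^k)\to0$ as $l\to\infty$. To pass from $[0,1]$ to an arbitrary bounded $B\subseteq[-R,R]$, I would use the triangle inequality for the Hausdorff semi-distance,
\[
\rho_H(\S^{k,l}(B),C^k)\le\rho_H\bigl(\S^{k,l}(B),\S^{k,l}([0,1])\bigr)+\rho_H\bigl(\S^{k,l}([0,1]),C^k\bigr),
\]
and bound the first term by contraction: every point of $\S^{k,l}(B)$ is of the form $g(x)$ with $x\in B$ for a composition $g=f_{i_1}\circ\dots\circ f_{i_{l-k}}$ of contraction ratio $\prod_{j=k+1}^{l}c_j\le 2^{-(l-k)}$, and comparing $g(x)$ with $g(x_0)\in\S^{k,l}([0,1])$ for a fixed $x_0\in[0,1]$ shows $\rho_H(\S^{k,l}(B),\S^{k,l}([0,1]))\le 2^{-(l-k)}(R+1)\to0$.

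The routine parts are the compactness and the contraction estimate; the step requiring the most care is the index bookkeeping in the self-similar decomposition---specifically, checking that the shift $\set{c_{n+k}}\mapsto\set{c_{n+k+1}}$ relating $C^k$ to $C^{k+1}$ matches the increment in the iterated function system's level, so that $\S^k$ applied to $C^{k+1}$ rebuilds exactly the first generation of $C^k$. Once this correspondence is pinned down, uniqueness of the pullback attractor (Theorem \ref{unique}) identifies $\set{C^k}$ as \emph{the} attractor.
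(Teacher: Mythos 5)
Your proposal is correct and follows essentially the same route as the paper: verify the three properties of Definition \ref{definition - pullback attractor} directly, with attraction obtained from the triangle inequality for $\rho_H$ together with the contraction factor $\prod c_j\le 2^{-(l-k)}$, and then invoke Theorem \ref{unique} for uniqueness. The only cosmetic difference is in the invariance step, where you decompose $C^k$ directly into the two affine copies $c_{k+1}C^{k+1}\cup(c_{k+1}C^{k+1}+1-c_{k+1})$, whereas the paper first establishes $C^k_n=\S^{k,k+n}([0,1])$ and passes $\S^k$ through the infinite intersection; both rest on the same generation-by-generation bookkeeping.
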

\begin{proof}
Clearly each $C^k$ is compact and uniformly bounded so property \ref{pullback attractor property 1} is satisfied.

Next, writing $C^{k}_{1}=\left[0,1\right]$ and $C^k_{n+1}={\rm gen}_{c_{n+k}} C^k_n$ it is not difficult to see that
\begin{align}\label{prepend}
C^{k}_{n}=\S^{k,k+n}\left(\left[0,1\right]\right)=\S^{k}\circ\ldots\circ \S^{k+n-1}\left(\left[0,1\right]\right).
\end{align}

Consequently,
\begin{align*}
	\S^k C^{k+1}
	=\bigcap_{n=1}^\infty \S^k \circ \S^{k+1,k+n}([0,1])
	=\bigcap_{n=1}^\infty \S^{k,k+n}([0,1])
	=\bigcap_{n=1}^\infty C_{n+1}^{k} = C^{k}
\end{align*}
shows that $C^k$ is invariant, hence property \ref{pullback attractor property 2} is satisfied.
Finally, let $B$ be any bounded set. An argument from Section \ref{non-auto-sec} shows that
\[
\rho_H(S^{k,l}(B),S^{k,l}([0,1]))\le 2^{k-l}\rho_H(B,[0,1])
\]
(this is a consequence of \eqref{lcontract} with $\sigma^*=1/2$) which, coupled with the fact that
\[
\rho_H(C^k_n,C^k)\to 0 \quad \text{as}\quad n\to\infty
\]
yields
\begin{align*}
	\rho_H\big(S^{k,l}(B),C^k\big)
	&\le
	\rho_H\big(S^{k,l}(B),S^{k,l}([0,1])\big)
	+\rho_H\big(S^{k,l}([0,1]),C^k\big)\\
	&\le
	2^{k-l}\rho_H(B,[0,1])
	+\rho_H(C^k_{l-k},C^k)\to 0
\end{align*}%
as $l\to\infty$.  Therefore, property \ref{pullback attractor property 3} is satisfied. Consequently $\set{C^{k}}$ is a pullback attractor of the above iterated function system, which by Theorem \ref{unique} is unique.
\end{proof}

We remark that from \eqref{prepend} we see that applying the procedure $\gen_{c_{n}}$ to the intervals $C_{n}$ is equivalent to replacing the chain of maps $\S^{0,n}$ acting on $\left[0,1\right]$ by $\S^{0,n}\circ \S^{n} = \S^{0,n+1}$, i.e. subsequent levels of the Cantor set construction are obtained by prepending maps in the iterated function system.

\subsection{Summary of main results: Equi-homogeneity of attractors for iterated function systems}
It is well known that self-similar sets (i.e. attractors of autonomous iterated function systems) that satisfy the Moran open-set condition are Ahlfors-David regular (see, for example, Theorem 1(i) Section 5.3 of Hutchinson \cite{Hutch}). By Theorem \ref{theorem - ahlfors implies equih} it follows that such sets are equi-homogeneous.

\begin{restatable}{theorem}{mainzero}
\label{theorem - self-similar}
Let $F$ be the attractor of an autonomous iterated function system of similarities. If $F$ satisfies the Moran open-set condition then $F$ is equi-homogeneous.
\end{restatable}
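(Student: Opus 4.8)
The plan is to route the argument through Ahlfors-David regularity rather than to verify equi-homogeneity directly from its definition. The proof then reduces to two links in a chain: first establish that $F$ is Ahlfors-David regular, then invoke Theorem \ref{theorem - ahlfors implies equih} to conclude that $F$ is equi-homogeneous.

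For the first link I would appeal to the classical construction of the natural self-similar measure on $F$. Writing $\sigma_i$ for the contraction ratios and letting $s$ be the similarity dimension, defined as the unique solution of $\sum_i \sigma_i^s = 1$, one assigns to each finite word the product of the corresponding ratios raised to the power $s$; this defines a Borel probability measure $\mu$ supported on $F$. The Moran open-set condition controls the overlap of the cylinder sets $f_{i_1}\circ\cdots\circ f_{i_n}(U)$, and a standard packing argument using the disjointness condition shows that any ball $B(x,r)$ with $x\in F$ and $0<r\le\diameter F$ meets only boundedly many cylinders of diameter comparable to $r$. This yields the two-sided estimate $c_1 r^s \le \mu(B(x,r)) \le c_2 r^s$ that is the definition of Ahlfors-David regularity. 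Since this is exactly Hutchinson's theorem cited above, I would state it and refer to \cite{Hutch} rather than reproduce the estimate. For the second link, $F$ is compact and bounded as the attractor of a finite system of contractions, so the hypotheses of Theorem \ref{theorem - ahlfors implies equih} are satisfied and equi-homogeneity follows at once.

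The main obstacle, to the extent there is one, lies entirely in the first link and specifically in the lower bound $\mu(B(x,r)) \ge c_1 r^s$. The upper bound follows readily from the self-similarity of $\mu$, but the lower bound requires the open-set condition to prevent mass from being concentrated arbitrarily, which is where the separation property does the real work. As this estimate is classical and available in \cite{Hutch}, the genuinely new content has already been absorbed into Theorem \ref{theorem - ahlfors implies equih}, and the present result is essentially a corollary assembled from these two standard ingredients.
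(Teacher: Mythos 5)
Your proposal is correct and follows exactly the paper's own route: cite Hutchinson's theorem that self-similar sets satisfying the Moran open-set condition are Ahlfors-David regular, then apply Theorem \ref{theorem - ahlfors implies equih} to conclude equi-homogeneity. The additional sketch of the self-similar measure argument is accurate but, as you note, already absorbed into the cited result.
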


In Section \ref{non-auto-sec} we extend this analysis to pullback attractors of non-autonomous iterated function systems. Unlike the autonomous case these attractors are typically not Ahflors-David regular; however, they are equi-homogeneous under some mild assumptions. After proving the existence and uniqueness of pullback attractors for iterated function systems of arbitrary contractions (essentially following an argument of Hutchinson \cite{Hutch}) we restrict our attention to iterated function systems of contracting similarities.

Our first result concerning such non-autonomous iterated function systems requires the contraction ratios to coincide at each stage of the iteration. 

\begin{restatable}{theorem}{mainone}
\label{theorem - homogeneous case}
If $\set{F^{k}}$ is the pullback attractor of a non-autonomous iterated function system of similarities satisfying the Moran open-set condition, $\sigma_{i}=c_{k}$ for all $i\in\I_{k}$ and all $k\in\mathbb{N}$, and there exists an $N>0$ such that ${\rm card}(\I_k) \leq N$ for $k\in\N$ then each set $F^{k}$ is equi-homogeneous.
\end{restatable}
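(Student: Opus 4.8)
The plan is to exploit the exact self-similar structure that the homogeneity hypothesis $\sigma_i=c_k$ forces on each $F^k$, estimating every local cover directly; unlike the autonomous case underlying Theorem \ref{theorem - self-similar}, these attractors are not Ahlfors--David regular, so Theorem \ref{theorem - ahlfors implies equih} is unavailable. Fix $k$ and iterate the invariance $F^k=\S^k(F^{k+1})$ to obtain, for every $n$, the decomposition $F^k=\bigcup_{w}f_w(F^{k+n})$, where $w$ ranges over the words selecting one index from each of $\I_{k+1},\dots,\I_{k+n}$ and $f_w$ is the corresponding composition. Because all ratios at level $j$ equal $c_j$, every $f_w$ is a similarity of the \emph{same} ratio $\gamma_n\define\prod_{m=k+1}^{k+n}c_m$; this common scale is precisely what lets a single combinatorial generation be matched to a single length-scale. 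Iterating the generalised Moran open-set condition shows that the sets $f_w(U^{k+n})$ are pairwise disjoint and that $F^k\subseteq\overline{U^k}$, so by uniform boundedness $\set{U^k}$ and hence $\set{F^k}$ have diameters bounded by a constant $D$ independent of $k$. I would record these facts before the main estimate.

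The heart of the argument is to produce, for each pair of scales $0<r\le R$, quantities $a(r,R)\le b(r,R)$ depending on $r,R,k$ but \emph{not} on the centre, with $a(r,R)\le \cover(F^k\cap B(x,R),r)\le b(r,R)$ for all $x\in F^k$ and $b\le\similarity\, a$ for a uniform constant $\similarity$; this says exactly that the largest and smallest local covers at scale $R$ agree up to the factor $\similarity$, i.e.\ equi-homogeneity. Given $R<D$, choose the generation $n$ by $\gamma_n D\le R<\gamma_{n-1}D$. For the lower bound, any $x\in F^k$ lies in some generation-$n$ piece $f_w(F^{k+n})$, whose diameter is at most $\gamma_n D\le R$ and which therefore lies inside $B(x,R)$; as $f_w$ is a similarity of ratio $\gamma_n$ this gives $\cover(F^k\cap B(x,R),r)\ge\cover(F^{k+n},r/\gamma_n)=:a$. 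For the upper bound I would run a packing (bounded-overlap) estimate at the coarser generation $n-1$: if a generation-$(n-1)$ piece meets $B(x,R)$ then its disjoint open set $f_{w'}(U^{k+n-1})$, of volume at least $\gamma_{n-1}^d\epsilon_0$ and diameter at most $\gamma_{n-1}D$, lies in $B(x,R+\gamma_{n-1}D)\subseteq B(x,2\gamma_{n-1}D)$, so comparing Lebesgue volumes bounds the number of such pieces by $K\define 2^d\omega_d D^d/\epsilon_0$, independent of $x,r,R$. Since the pieces meeting $B(x,R)$ cover $F^k\cap B(x,R)$, covering each in full yields $\cover(F^k\cap B(x,R),r)\le K\,\cover(F^{k+n-1},r/\gamma_{n-1})=:b$.

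It remains to reconcile the two generations, and this is where the cardinality hypothesis enters. Writing $F^{k+n-1}=\bigcup_{i\in\I_{k+n}}f_i(F^{k+n})$ as a union of $\cardinality(\I_{k+n})\le N$ copies of $F^{k+n}$ scaled by $c_{k+n}=\gamma_n/\gamma_{n-1}$ gives $\cover(F^{k+n-1},r/\gamma_{n-1})\le N\,\cover(F^{k+n},r/\gamma_n)=Na$, whence $b\le KNa$ and equi-homogeneity holds with $\similarity=KN$. I expect the main obstacle to be exactly this reconciliation: because the ratios $c_k$ may be arbitrarily small, the first generation that fits inside a ball of radius $R$ can consist of pieces vastly smaller than $R$, so no single generation simultaneously supports both the ``one piece inside the ball'' lower bound and the bounded-overlap upper bound. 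Sandwiching between the consecutive generations $n-1$ and $n$ and paying the factor $\cardinality(\I_{k+n})\le N$ to pass between them is what keeps the estimate uniform, and it is precisely this step that requires both the homogeneity of the ratios and the uniform bound on $\cardinality(\I_k)$, the two hypotheses separating this theorem from the general non-autonomous setting.
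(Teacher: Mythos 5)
Your argument is correct and is essentially the paper's own proof: the same choice of generation $n$ via $\pi_n\eta<\delta\le\pi_{n-1}\eta$, the same one-piece-inside-the-ball lower bound $\cover(B_\delta(x)\cap F^0,\rho)\ge \cover(F^n,\rho/\pi_n)$, and the same Lebesgue-volume count of generation-$(n-1)$ pieces using the disjoint Moran open sets and the bound $\lambda(U^k)\ge\epsilon_0$, followed by paying the factor $\mathrm{card}(\I_{k+n})\le N$ to pass to generation $n$. The only cosmetic difference is that you perform that last step by subdividing each generation-$(n-1)$ piece at the level of covering numbers, whereas the paper bounds $\mathrm{card}(A_n)\le N\,\mathrm{card}(A_{n-1})$ directly; the estimates are identical.
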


We also note (and prove in Section \ref{section - equi-homogeneity and assouad}) that while the resulting pullback attractors are equi-homogeneous, under certain choices for the sequence $c_k$, their Assouad and upper box-counting dimensions are not equal.

Next we turn to the case where the contracting ratios $\sigma_i$ may be different for indices within each index set $\I_k$.
In this case we require some uniformity in the contraction ratios:

\begin{restatable}{theorem}{mainthree}
\label{theorem - general}
Let $\set{F^{k}}$ be the pullback attractor of a non-autonomous iterated function system of similarities that satisfies the Moran open-set condition.
If
\[
\inf\{\,\sigma_i:i\in\N\,\}=\sigma_*>0
\]
and there exists an $s>0$ such that
\begin{equation}\label{hippo}
\sum_{i\in\I_k} \sigma_i^s=1 \words{for all} i\in\I_k
\end{equation}
then each set $F^k$ is equi-homogeneous with $\dim_{\rm A} F^k = s$.
\end{restatable}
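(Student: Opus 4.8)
The plan is to use the scaling identity forced by \eqref{hippo} to build a natural measure on each $F^k$, to prove this measure is Ahlfors--David regular with exponent $s$ and with constants uniform in both the base point and the index $k$, and to read off equi-homogeneity and the Assouad dimension from the resulting two-sided mass bounds. First I would record the uniform bounds the hypotheses force. Since every summand in \eqref{hippo} is at least $\sigma_*^s$, the identity $\sum_{i\in\I_k}\sigma_i^s=1$ gives $\cardinality(\I_k)\le\sigma_*^{-s}=:N$, and since $\cardinality(\I_k)\ge2$ it also gives $\sigma_i\le(1-\sigma_*^s)^{1/s}=:\sigma^*<1$ for every $i$; thus all ratios lie in $[\sigma_*,\sigma^*]$ and the branching is uniformly bounded. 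Uniform boundedness of the attractor (property \ref{pullback attractor property 1}) furnishes a single $D$ with $\diameter(F^k)\le D$ for all $k$, and Definition \ref{definition - generalised MOSC} gives $D_U\define\sup_k\diameter(U^k)<\infty$. For a finite word $\mathbf i=(i_1,\dots,i_m)$ with $i_j\in\I_{k+j}$ write $f_{\mathbf i}=f_{i_1}\circ\cdots\circ f_{i_m}$ and $\sigma_{\mathbf i}=\sigma_{i_1}\cdots\sigma_{i_m}$, so the cylinder $f_{\mathbf i}(F^{k+\abs{\mathbf i}})$ has diameter $\sigma_{\mathbf i}\diameter(F^{k+\abs{\mathbf i}})\le\sigma_{\mathbf i}D$. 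The key consequence of \eqref{hippo} is the factorised identity $\sum_{\mathbf i}\sigma_{\mathbf i}^s=\prod_{j}\bigl(\sum_{i\in\I_{k+j}}\sigma_i^s\bigr)=1$, valid over any antichain of cylinders meeting every infinite branch (by repeatedly collapsing a full set of siblings to its parent using \eqref{hippo}). Setting $\mu(f_{\mathbf i}(F^{k+\abs{\mathbf i}}))\define\sigma_{\mathbf i}^s$ is therefore consistent, and the open-set condition guarantees that distinct cylinders overlap in a $\mu$-null set, so $\mu=\mu^k$ extends to a Borel probability measure on $F^k$.

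The heart of the argument is the two-sided bound $c\,r^s\le\mu(B(x,r))\le C\,r^s$ for all $x\in F^k$ and $0<r\le\diameter(F^k)$, with $c,C$ depending only on $s,\sigma_*,d,\epsilon_0,D,D_U$. For the upper bound I would fix the ratio-based antichain $\Sigma_r=\set{\mathbf i:\sigma_{\mathbf i}\le r<\sigma_{\mathbf i'}}$, where $\mathbf i'$ denotes the parent of $\mathbf i$, so that $\sigma_{\mathbf i}\in[\sigma_* r,r]$ on $\Sigma_r$ (the crossing exists because $\sigma_{\mathbf i}\le(\sigma^*)^{\abs{\mathbf i}}\to0$). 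The open sets $f_{\mathbf i}(U^{k+\abs{\mathbf i}})$ for $\mathbf i\in\Sigma_r$ are pairwise disjoint by condition (ii), each has Lebesgue measure $\sigma_{\mathbf i}^d\lambda(U^{k+\abs{\mathbf i}})\ge(\sigma_* r)^d\epsilon_0$ by condition (iii), and each has diameter at most $rD_U$; as each cylinder lies in the closure of the corresponding open set, only boundedly many of these disjoint sets can meet $B(x,r)$, by comparing volumes inside a ball of radius comparable to $r$. Since $\sigma_{\mathbf i}^s\le r^s$ on $\Sigma_r$, summing over the meeting cylinders gives $\mu(B(x,r))\le C r^s$. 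For the lower bound, with $x$ lying on the branch $\omega$, I would take the first cylinder $f_{\omega|n}(F^{k+n})$ along $\omega$ of diameter at most $r$; it then lies in $B(x,r)$, while its parent has diameter exceeding $r$, so $\sigma_{\omega|(n-1)}>r/D$ and hence $\sigma_{\omega|n}\ge\sigma_* r/D$. Consequently $\mu(B(x,r))\ge\mu\bigl(f_{\omega|n}(F^{k+n})\bigr)=\sigma_{\omega|n}^s\ge(\sigma_*/D)^s r^s$.

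These estimates say precisely that each $F^k$ is Ahlfors--David regular with exponent $s$ in the form recalled in Definition \ref{definition - Ahlfors-David}, whence Theorem \ref{theorem - ahlfors implies equih} immediately yields that each $F^k$ is equi-homogeneous and that its lower Assouad, Hausdorff, packing, lower box-counting, upper box-counting and Assouad dimensions coincide; since an Ahlfors--David $s$-regular set has Hausdorff dimension $s$, this common value is $s$, so in particular $\dim_{\rm A}F^k=s$. Alternatively one can argue directly without citing Theorem \ref{theorem - ahlfors implies equih}: refining $\Sigma_r$ inside the boundedly many cylinders of $\Sigma_R$ that meet $B(x,R)$ and using the antichain identity inside each gives $\cover(F^k\cap B(x,R),r)\le C(R/r)^s$ for every $x$, while $\cover(F^k\cap B(x,R),r)\ge\mu(F^k\cap B(x,R))/\sup_z\mu(B(z,r))\ge c(R/r)^s$ for every $x$ by the mass bounds. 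Comparability of these local covering numbers across base points is exactly equi-homogeneity, and taking $R=\diameter(F^k)$ in the lower bound gives $\dim_{\rm A}F^k\ge s$, matching the upper estimate.

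I expect the main obstacle to be the lower mass bound. One must produce a cylinder sitting inside $B(x,r)$ whose weight $\sigma_{\omega|n}^s$ is comparable to $r^s$ using only the uniform \emph{upper} diameter bound $\diameter(F^{k+n})\le D$, since no lower bound on $\diameter(F^{k+n})$ is available; the diameter-based stopping rule above is what circumvents this. The second point requiring care is that every constant produced by the volume-comparison and stopping arguments must be shown independent of both $x$ and $k$, as the uniformity in $k$ is what lets the conclusion hold simultaneously for the whole family $\set{F^k}$.
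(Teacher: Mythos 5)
Your argument is correct, but it is organised quite differently from the paper's. The paper never passes through Ahlfors--David regularity: it fixes the stopping-time antichain $\J^k_\delta=\set{\alpha : \sigma_\alpha\eta<\delta\le\sigma_{\alpha'}\eta}$ (with $\eta=\sup_k\diameter(\overline{U^k})$ rather than your attractor-diameter rule), proves a bounded-overlap estimate $\cardinality(A^k_\delta)\le\kappa_0$ by the same volume-versus-$\epsilon_0$ comparison you use (Lemma \ref{Alemma}), and then uses the natural measure $\mu^k$ only as a counting device to show $\kappa_1\delta^{-s}\le\cardinality(\J^k_\delta)\le\kappa_2\delta^{-s}$ (Lemma \ref{Jlemma}); the theorem itself is then a direct two-sided estimate $\kappa_3(\delta/\rho)^s\le\inf_x\cover(B_\delta(x)\cap F^0,\rho)\le\sup_x\cover(B_\delta(x)\cap F^0,\rho)\le\kappa_4(\delta/\rho)^s$, obtained by covering $B_\delta(x)\cap F^0$ with the $\le\kappa_0$ cylinders of $A^0_\delta$ and refining each, and by a greedy separation argument producing a packing inside a single cylinder $f_\alpha(F^{n_\alpha})\subseteq B_\delta(x)$. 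You instead promote the measure to the main object, prove the uniform two-sided mass bound $c\,r^s\le\mu^k(B(x,r))\le C\,r^s$, and invoke the paper's own Theorem \ref{theorem - ahlfors implies equih} together with the cited fact (Heinonen, Chapter 8) that such a bound for any Borel measure yields Ahlfors--David $s$-regularity. Your route proves the strictly stronger statement that each $F^k$ is uniformly Ahlfors--David $s$-regular under \eqref{hippo} --- a fact the paper does not record, and one consistent with its remark that under this hypothesis the box-counting dimensions are attained and coincide with $\dim_{\rm A}$ --- and it reuses existing machinery; the cost is reliance on the external Heinonen result (though your sketched direct covering argument, which is essentially the paper's proof reorganised, removes that dependence). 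Your diameter-based stopping rule for the lower mass bound, and your observation that only the uniform upper bound $\diameter(F^{k+n})\le D$ is needed there, correctly handle the one genuinely delicate point; the remaining gaps (that $\cardinality(\I_k)\ge 2$ forces $\sigma^*<1$, that $F^k\subseteq\overline{U^k}$ via Lemma \ref{finubar}, and that $\mu^k(f_{\mathbf i}(F^{k+\abs{\mathbf i}}))$ equals $\sigma_{\mathbf i}^s$ exactly under the open-set condition) are all either justified by you or glossed over to the same extent as in the paper.
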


Finally, we note that the hypothesis \eqref{hippo} can be weakened a little, for which we introduce the notation 
\begin{align*}
\J^{k,l}&=\I_{k+1}\times\cdots\times\I_l &\text{for}\ k<l.
\end{align*}

\begin{restatable}{theorem}{mainfour}\label{theorem - general averaged}
Let $\set{F_{k}}$ be the pullback attractor of a non-autonomous iterated function system of similarities that satisfies the Moran open-set condition. If
\[
\inf\{\,\sigma_i:i\in\N\,\}=\sigma_*>0
\]
and there exist constants $s>0$, $n_0\in\N$, and $L>0$ such that
\begin{equation}\label{ahippo}
L^{-1}\le {\sum_{\alpha\in\J^{k,k+n}}} \sigma_\alpha^s\le L \words{for all} k\in\mathbb{N}\words{and} n\ge n_0
\end{equation}
then each set $F^k$ is equi-homogeneous with $\dim_{\rm A} F^k=s$.
\end{restatable}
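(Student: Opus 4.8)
The plan is to re-run the covering argument of Theorem \ref{theorem - general}, replacing the exact identity available there (where \eqref{hippo} forces each level to carry total weight $1$) by the two-sided estimate \eqref{ahippo}, and to carry the constant $L$ through the estimates. Writing $p_j=\sum_{i\in\I_j}\sigma_i^s$ and noting that $\sigma_\alpha=\prod_{j=k+1}^{l}\sigma_{i_j}$ for a word $\alpha=(i_{k+1},\dots,i_l)\in\J^{k,l}$, the level sum factorises as
\[
\sum_{\alpha\in\J^{k,k+n}}\sigma_\alpha^s=\prod_{j=k+1}^{k+n}p_j,
\]
so that \eqref{ahippo} is exactly the statement $\prod_{j=k+1}^{k+n}p_j\in[L^{-1},L]$ for all $k$ and all $n\ge n_0$.

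First I would record two elementary consequences. Since every $p_j\ge\sigma_*^s$, dividing a length-$n_0$ window product by its remaining factors gives $\sigma_*^s\le p_j\le L\sigma_*^{-s(n_0-1)}=:P$ for each individual factor; combining this with the window bound for $n\ge n_0$ then yields a uniform two-sided control
\[
A^{-1}\le\prod_{j=k+1}^{k+m}p_j\le A
\]
for every $k$ and every $m\ge1$, with $A$ depending only on $L,\sigma_*,s,n_0$. Interpreted through the natural self-similar weighting $\mu^k\big(f_\alpha(F^{k+m})\big)=\sigma_\alpha^s/\prod_{j=k+1}^{k+m}p_j$, this says that the mass of a cylinder is comparable to $\sigma_\alpha^s$, uniformly in $k$ and $m$; this is exactly the feature that drives the exact case, now recovered up to the multiplicative constant $A$.

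The core step is to transfer this from the fixed-depth level cuts $\J^{k,k+n}$ to the single-scale stopping cuts
\[
\J_\rho^k=\set{\alpha:\sigma_\alpha\le\rho<\sigma_{\alpha'}},
\]
where $\alpha'$ denotes $\alpha$ with its last letter deleted; these are the covers of diameter comparable to $\rho$, since $\sigma_*\rho<\sigma_\alpha\le\rho$ for $\alpha\in\J_\rho^k$. I would show that there exist uniform constants $c_1,c_2>0$ with $c_1\le\sum_{\alpha\in\J_\rho^k}\sigma_\alpha^s\le c_2$ for all $k$ and all small $\rho$. This is the main obstacle, because a stopping cut mixes words of different lengths and so is not itself a clean product of the $p_j$. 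I expect to handle it in one of two ways: if $\sup_i\sigma_i<1$ the lengths occurring in $\J_\rho^k$ lie in a window $[m_{\min}(\rho),m_{\max}(\rho)]$ whose width is bounded independently of $\rho$, so the stopping sum is sandwiched between two level sums differing only by a bounded product of the uniformly-bounded factors $p_j$; otherwise I would argue directly from the renewal relation $\sum_{\alpha\in\J_\rho^k}\sigma_\alpha^s=\sum_{i\in\I_{k+1}}\sigma_i^s\sum_{\beta\in\J_{\rho/\sigma_i}^{k+1}}\sigma_\beta^s$ (valid for small $\rho$) together with the uniform bound on the $p_j$.

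Granting this stopping-cut estimate, I would finish as in the proof of Theorem \ref{theorem - general}. Since $\sigma_\alpha\approx\rho$ on $\J_\rho^k$, the bound on the weighted sum gives $\cardinality(\J_\rho^k)\approx\rho^{-s}$, and refining an $R$-scale cylinder $f_\beta(F^{k+|\beta|})$ (with $\sigma_\beta\approx R$) down to scale $\rho$ contributes $\cardinality\big(\J_{\rho/\sigma_\beta}^{k+|\beta|}\big)\approx(R/\rho)^s$ pieces, uniformly in $k$ and in the choice of cylinder. The Moran open-set condition bounds the number of cylinders meeting any ball and keeps the pieces separated, so every local cover satisfies $N\big(F^k\cap B(x,R),\rho\big)\approx(R/\rho)^s$ with constants independent of $x$, $R$, $\rho$ and $k$. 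Comparability of all local covers at a given pair of scales is precisely equi-homogeneity, while the two-sided bound $N\big(F^k\cap B(x,R),\rho\big)\approx(R/\rho)^s$ identifies $\dim_{\rm A}F^k=s$.
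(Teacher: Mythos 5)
Your proposal takes essentially the same route as the paper: the paper offers no separate argument for Theorem \ref{theorem - general averaged} beyond the remark that the proof of Theorem \ref{theorem - general} goes through ``without changing the details'', and the substance of that remark is exactly what you supply --- the factorisation $\sum_{\alpha\in\J^{k,k+n}}\sigma_\alpha^s=\prod_{j=k+1}^{k+n}p_j$, the uniform two-sided bound $A^{-1}\le\prod_{j=k+1}^{k+m}p_j\le A$ for \emph{all} $m\ge1$ obtained from \eqref{ahippo} together with $p_j\ge\sigma_*^s$, and the consequence that the measures $\mu^k$ now satisfy $\mu^k(f_\alpha(F^{n_\alpha}))\asymp\sigma_\alpha^s$ up to the factor $A$, so that Lemmas \ref{Alemma} and \ref{Jlemma} and the covering argument survive with $A$ absorbed into the constants $\kappa_1,\kappa_2$.

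One caveat on your ``core step'': the first of your two alternatives is based on a false claim. For $\alpha\in\J^k_\rho$ the word length satisfies roughly $\log(1/\rho)/\log(1/\sigma_*)\le|\alpha|\le 1+\log(1/\rho)/\log(1/\sigma^*)$, so the window of admissible lengths has width growing like $\log(1/\rho)$ (unless all the $\sigma_i$ coincide); it is not bounded independently of $\rho$, and sandwiching the stopping-cut sum between two level sums in this way does not work. This does not damage the proof, because your second alternative --- arguing directly through the measures $\mu^k$ with uniformly bounded normalisers, i.e.\ $1\le\sum_{\alpha\in\J^k_\rho}\mu^k(f_\alpha(F^{n_\alpha}))\le A\sum_{\alpha\in\J^k_\rho}\sigma_\alpha^s$ for the lower bound and the disjoint subfamily extracted via Lemma \ref{Alemma} for the upper bound --- is exactly the mechanism of Lemma \ref{Jlemma} and is the route you should commit to.
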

Intuitively \eqref{ahippo} implies that \eqref{hippo} holds when averaged over long enough sequences of iterations.

Note that the existence of generalised Cantor sets with unequal upper box-counting and Assouad dimensions (see Proposition \ref{proposition - equihom distinct dimension}) indicates a problem with the proof in Li \cite{Li2013} as these sets are examples of Moran sets (see Wen \cite{Wen2001}) with contraction ratios bounded below by some positive number. The addition of the assumption \eqref{hippo} (rewritten in the framework of Moran sets) is sufficient to overcome these problems and conclude that the upper box-counting and Assouad dimensions are equal. Further, with this assumption it can be shown that the Moran sets considered in Li \cite{Li2013} are equi-homogeneous (see Olson, Robinson \& Sharples \cite{ORSMoransets}).

\section{Dimension and regularity}\label{section - dimension and regularity}
This section recalls the definitions and some facts about various notions of dimension and regularity of sets in an arbitrary metric space $\left(X,d_{X}\right)$.

We denote by $B_{\delta}(x)=\set{y\in X : d_X(x,y) \leq \delta}$ the closed ball of radius $\delta$ with centre $x\in X$ and
for brevity we refer to closed balls of radius $\delta$ as $\delta$-balls.
For a set $F\subset X$ and each $\delta>0$ we denote  by $\cover(F,\delta)$ the minimum number of $\delta$-balls with centres in $F$ such that $F$ is contained in their union.
A set $F\subset X$ is said to be \textit{totally bounded} if for all $\delta>0$ the quantity $\cover(F,\delta)<\infty$, which is to say that $F$ can be covered by finitely many balls of any radius. We say that a metric space $X$ is \textit{locally totally bounded} if every ball in $X$ is totally bounded.

\subsection{Box-counting and Assouad dimensions}
We begin by recalling the definitions of the box-counting and Assouad dimensions.
\begin{definition}\label{definition - box-counting dimensions}
  For a totally bounded set $F\subset X$ the upper and lower
  box-counting dimensions are defined by
  \begin{align}
	\dim_{\rm B}F&=\limsup_{\delta\to 0+}
    \frac{\log \cover(F,\delta)}{-\log \delta}\\
    \text{and}\qquad
    \dim_{\rm LB}F&=\liminf_{\delta\to 0+}
		\frac{\log \cover(F,\delta)}{-\log \delta},
  \end{align}
  respectively.
\end{definition}

The box-counting dimensions essentially capture the exponent $s\in\R^{+}$ for which $\cover(F,\delta)\sim \delta^{-s}$. More precisely, it follows from Definition \ref{definition - box-counting dimensions} that for all $\varepsilon>0$ and all $\delta_{0}>0$ there exists a constant $C\geq 1$ such that
\begin{align}
  C^{-1}\delta^{-\dim_{\rm LB}F+\varepsilon} &\leq \cover(F,\delta)\leq
  C\delta^{-\dim_{\rm B}F-\varepsilon} & \forall\
  0<\delta\leq\delta_0.\label{box-counting growth bounds}
\end{align}

For some bounded sets $F$ the bounds \eqref{box-counting growth bounds} also hold for $\varepsilon=0$ giving precise control of the growth of $\cover(F,\delta)$. In Olson, Robinson and Sharples \cite{ORS} we distinguish this class of sets and say that they `attain' their box-counting dimensions.

A useful quantity for proving lower bounds is $\pack(F,\delta)$, the maximum number of disjoint $\delta$-balls with centres in $F$, which is related to the minimum cover by centred balls by
\begin{align}\label{geometric inequalities}
\cover(F,2\delta)\leq \pack(F,2\delta)\leq \cover(F,\delta)
\end{align}
(see, for example, `Equivalent definitions' 2.1 in Falconer \cite{BkFalconer14} or Lemma 2.1 in Robinson \& Sharples \cite{RobinsonSharples13RAEX}). In light of the inequalities \eqref{geometric inequalities}, replacing $\cover(F,\delta)$ with $\pack(F,\delta)$ in Definition \ref{definition - box-counting dimensions} gives an equivalent formulation of the box-counting dimensions.

The Assouad dimension is a less familiar notion of dimension, in which we are concerned with `local' coverings of a set $F$: for more details see Assouad \cite{Assouad}, Bouligand \cite{Bouligand}, Luukkainen \cite{Luuk} Olson \cite{EJO}, or Robinson \cite{JCR}.

\begin{definition}\label{definition - Assouad dimension}
The Assouad dimension of a set $F\subset X$ is the infimum over all $s\in \R^{+}$ such that for all $\delta_{0}>0$ there exists a constant $C>0$ for which
\begin{align}
\sup_{x\in F} \cover\left(B_{\delta}\left(x\right)\cap F,\rho\right) \leq C\left(\delta/\rho\right)^{s}\quad \forall\ \delta,\rho \quad \text{with} \quad 0<\rho<\delta\leq\delta_{0}.\label{Assouad scale}
\end{align}
\end{definition}

The lower Assouad dimension, also called the minimal dimension, complements the Assouad dimension with a lower bound on the scaling of local covers (see, for example, Larman \cite{Larman} or Fraser \cite{Fraser2013}).
\begin{definition}\label{definition - lower Assouad dimension}
The lower Assouad dimension of a set $F\subset X$ is the supremum over all $s\in\R^{+}$ such that for all $\delta_{0}>0$ there exists a constant $C>0$ for which
\begin{align*}
\inf_{x\in F} \cover\left(B_{\delta}\left(x\right)\cap F,\rho\right) \geq C\left(\delta/\rho\right)^{s}\quad \forall\ \delta,\rho \quad \text{with} \quad 0<\rho<\delta\leq\delta_{0}
\end{align*}
\end{definition}

Analogous to the box-counting dimensions, the Assouad and lower Assouad dimensions essentially capture the exponent $s\in \R^{+}$ for which $\cover\left(B_{\delta}\left(x\right)\cap F,\rho\right) \sim \left(\delta/r\right)^{s}$. More precisely, it follows from Definitions \ref{definition - Assouad dimension} and \ref{definition - lower Assouad dimension} that for all $\varepsilon >0$ and all $\delta_{0}>0$ there exists a constant $C\geq 1$ such that
\begin{equation}
C^{-1}\left(\delta/\rho\right)^{\dim_{\rm LA}F -\varepsilon} \leq \cover\left(B_{\delta}\left(x\right)\cap F,\rho\right) \leq C\left(\delta/\rho\right)^{\dim_{\rm A}F + \varepsilon}
\end{equation}
for all $x\in F$ and all $\delta,\rho$ with $0<\rho<\delta\leq \delta_{0}$

Minimally for the Assouad and lower Assouad dimensions to be defined we require that each intersection $B_{\delta}(x)\cap F$ is totally bounded. This trivially holds if $X$ is a locally totally bounded space, which is to say that every ball $B_{\delta}(x)\subset X$ is totally bounded (for example, in Euclidean space $X=\R^{n}$).

In this case Definition \ref{definition - Assouad dimension} is equivalent to the simpler formulation of taking the infimum over all $s\in\R^{+}$ for which there exist constants $\delta_{0}>0$ and $C>0$ such that \eqref{Assouad scale} holds, which is easier to check. A further useful formulation can be made if $F$ is itself totally bounded. In this case Definition \ref{definition - Assouad dimension} is equivalent to taking the infimum over $s\in \R^{+}$ for which there exists a constant $C>0$ such that \eqref{Assouad scale} holds for all $\delta,\rho$ with $0<\rho<\delta<\infty$. The lower Assouad dimension has similar equivalent formulations in these cases.
In this paper we study the Assouad dimensions in arbitrary metric spaces, requiring the full generality of Definition \ref{definition - Assouad dimension}, before focusing on attractors of iterated function systems in Euclidean space, which is locally totally bounded.

The following technical lemma gives a relationship between the minimal size of covers of the set $B_{\delta}(x)\cap F$ for different length-scales, which we will use in many of the subsequent proofs.

\begin{lemma}\label{lemma - refinement}
  Let $F\subset X$. For all $\delta,\rho,r>0$ and each $x\in F$
  \begin{align}\label{refinement}
    \cover(B_{\delta}(x)\cap F,\rho)&\leq
    \cover(B_{\delta}(x)\cap F,r) \sup_{x\in F}
    \cover(B_{r}(x)\cap F,\rho)
  \end{align}
\end{lemma}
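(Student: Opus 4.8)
The plan is to prove \eqref{refinement} by a two-scale chaining (submultiplicativity) argument: I would cover $B_{\delta}(x)\cap F$ first at the intermediate scale $r$, and then subdivide each piece of that cover at the finer scale $\rho$, multiplying the two counts. The supremum on the right-hand side is there precisely to absorb the fact that the intermediate cover centres are not under our control beyond knowing that they lie in $F$.

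Concretely, first I would set $N_{1}=\cover(B_{\delta}(x)\cap F,r)$ and fix an optimal cover of $B_{\delta}(x)\cap F$ by $r$-balls $B_{r}(x_{1}),\dots,B_{r}(x_{N_{1}})$ whose centres $x_{i}$ lie in $B_{\delta}(x)\cap F$. Intersecting the inclusion $B_{\delta}(x)\cap F\subseteq\bigcup_{i}B_{r}(x_{i})$ with $F$ yields
\[
B_{\delta}(x)\cap F\subseteq\bigcup_{i=1}^{N_{1}}\bigl(B_{r}(x_{i})\cap F\bigr),
\]
so it is enough to cover each piece $B_{r}(x_{i})\cap F$ at scale $\rho$. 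For each $i$ an optimal $\rho$-cover of $B_{r}(x_{i})\cap F$ uses $\cover(B_{r}(x_{i})\cap F,\rho)$ balls, and since $x_{i}\in F$ this number is at most $\sup_{y\in F}\cover(B_{r}(y)\cap F,\rho)$. Taking the union of these $N_{1}$ local covers produces a $\rho$-cover of $B_{\delta}(x)\cap F$ of total cardinality at most $N_{1}\,\sup_{y\in F}\cover(B_{r}(y)\cap F,\rho)$, which is exactly the right-hand side of \eqref{refinement}. I note that the argument imposes no ordering on $\delta$, $\rho$, $r$, since the chaining is valid for arbitrary positive radii (in particular, covers are monotone in the radius because enlarging a ball about a fixed centre preserves coverage).

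The one delicate point — and the main, if modest, obstacle — is the bookkeeping of centres. The balls assembled above are centred in $\bigcup_{i}(B_{r}(x_{i})\cap F)\subseteq F$ rather than necessarily in $B_{\delta}(x)\cap F$; the supremum over $y\in F$ is what licenses this at the intermediate step, and for the outer covering number one must verify that the assembled family is an admissible competitor, i.e.\ that these local covering quantities are read with centres ranging over $F$. With that understanding the chaining is exact and delivers the inequality with constant one, and one may further discard any ball that does not meet $B_{\delta}(x)\cap F$, which only decreases the count.
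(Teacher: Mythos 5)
Your proof is correct and follows essentially the same two-scale chaining argument as the paper's: cover $B_{\delta}(x)\cap F$ at the intermediate scale $r$, refine each piece $B_{r}(x_{i})\cap F$ at scale $\rho$, and bound each refinement by $\sup_{y\in F}\cover(B_{r}(y)\cap F,\rho)$. The centre-bookkeeping subtlety you flag (the assembled $\rho$-balls are centred in $F$ rather than in $B_{\delta}(x)\cap F$) is equally present, and left implicit, in the paper's own proof, so your version is if anything slightly more careful.
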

\begin{proof}
  The only non-trivial case occurs when $\rho<r<\delta$.
 
 If $M:=\cover(B_{\delta}(x)\cap F,r)=\infty$ then there is nothing to prove. Assume that $M<\infty$ and let $x_{1},\ldots, x_{M}\in F$ be the centres of the $r$-balls $B_{r}(x_{j})$ that cover $B_{\delta}(x)\cap F$. Clearly
  \begin{align*}
    B_{\delta}(x)\cap F &\subset \bigcup_{j=1}^{M}
    B_{r}(x_{j})\cap F\\
    \text{so}\qquad \cover(B_{\delta}(x)\cap F,\rho)
	&\leq \sum_{j=1}^{M} \cover(B_{r}(x_{j})\cap F,\rho)\\
    &\leq M \sup_{x\in F} \cover(B_{r}(x)\cap F,\rho)
  \end{align*}
  which is precisely \eqref{refinement}.
\end{proof}

It is known that for a totally bounded set $F\subset X$ the four notions of dimension that we have now introduced satisfy
\begin{align}\label{dimension inequalities 1}
  \dim_{\rm LA}F\leq \dim_{\rm LB}F\leq\dim_{\rm B}F\leq\dim_{\rm A}F
\end{align}
(see, for example, Lemma 9.6 in Robinson \cite{JCR}, Fraser \cite{Fraser2013} or Theorem A.5 in Luukkainen \cite{Luuk}).
Further, if $F$ is compact then
\begin{align}\label{dimension inequalities 2}
\dim_{\rm LA}F \leq \dim_{\rm H} F \leq \dim_{\rm P} F \leq \dim_{\rm B} F \leq \dim_{\rm A} F
\end{align}
(see Larman \cite{Larman}) where $\dim_{\rm H}$ and $\dim_{\rm P}$ are the familiar Hausdorff and packing dimensions (see, for example, Falconer \cite{BkFalconer14}).

The following simple example of a compact countable subset of the real line illustrates that the inequalities involving the Assouad dimensions in \eqref{dimension inequalities 1} can be strict:

\begin{proposition}\label{proposition - different scaling}
 For each $\alpha>0$ the set $F_{\alpha}\define \set{n^{-\alpha}}_{n\in\mathbb{N}}\cup\set{0}$ satisfies
\begin{align*}
\dim_{\rm LA}F_{\alpha} &=0\\
\dim_{\rm LB}F_{\alpha}=\dim_{\rm B}F_{\alpha}&= (1+\alpha)^{-1}
\intertext{and}
\dim_{\rm A} F_{\alpha} &= 1.
\end{align*}
\end{proposition}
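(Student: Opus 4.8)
The plan is to treat the three dimension computations separately, all resting on one elementary geometric estimate: by the mean value theorem the gap between consecutive points satisfies $n^{-\alpha}-(n+1)^{-\alpha}=\alpha\xi_n^{-\alpha-1}$ for some $\xi_n\in(n,n+1)$, so this gap is comparable to $n^{-\alpha-1}$ with constants depending only on $\alpha$. Thus near the $n$-th point the set $F_\alpha$ is locally spaced at scale $n^{-\alpha-1}$ while sitting at height $n^{-\alpha}$, and the interplay of these two exponents produces the different dimensional behaviours.

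For the box-counting dimensions I would fix $\delta>0$ and set the crossover index $N\define\lceil\delta^{-1/(1+\alpha)}\rceil$. For the upper bound I would cover each of the points $n^{-\alpha}$ with $n\le N$ by its own $\delta$-ball, and cover the tail $\{n^{-\alpha}:n>N\}\subset[0,N^{-\alpha}]$ by the roughly $N^{-\alpha}/\delta$ balls needed for an interval of length $N^{-\alpha}\approx\delta^{\alpha/(1+\alpha)}$; both contributions are of order $\delta^{-1/(1+\alpha)}$, giving $\cover(F_\alpha,\delta)\le C\delta^{-1/(1+\alpha)}$. For the matching lower bound I would use the gap estimate to note that for $n\le N$ (after adjusting $N$ by a fixed constant so that consecutive gaps exceed $2\delta$) the corresponding points are pairwise more than $2\delta$ apart, so each requires a distinct ball, whence $\cover(F_\alpha,\delta)\ge cN\ge c\delta^{-1/(1+\alpha)}$. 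Since both bounds hold for all small $\delta$, the ratio $\log\cover(F_\alpha,\delta)/(-\log\delta)$ converges to $(1+\alpha)^{-1}$, so $\dim_{\rm LB}F_\alpha=\dim_{\rm B}F_\alpha=(1+\alpha)^{-1}$.

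The lower Assouad dimension is the easiest: every point $n^{-\alpha}$ is isolated in $F_\alpha$, so for each fixed $n$ there is a radius $r_n>0$ with $B_\delta(n^{-\alpha})\cap F_\alpha=\{n^{-\alpha}\}$ whenever $\delta<r_n$, giving $\cover(B_\delta(n^{-\alpha})\cap F_\alpha,\rho)=1$ for all $\rho<\delta<r_n$. Hence $\inf_{x\in F_\alpha}\cover(B_\delta(x)\cap F_\alpha,\rho)\le1$, and the defining lower bound $C(\delta/\rho)^s$ with any $s>0$ is violated by letting $\rho\to0$. Taking the supremum over admissible $s$ then forces $\dim_{\rm LA}F_\alpha=0$.

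For the Assouad dimension, the upper bound $\dim_{\rm A}F_\alpha\le1$ is immediate from the ambient space, since covering the interval $B_\delta(x)\subset\R$ by $\rho$-balls needs at most $C\delta/\rho$ of them, so $\cover(B_\delta(x)\cap F_\alpha,\rho)\le C(\delta/\rho)$. The substance is the lower bound $\dim_{\rm A}F_\alpha\ge1$, and I expect this to be the main obstacle, because unlike the box-counting case the correct windows are \emph{not} centred at $0$ (that choice only recovers the box-counting exponent); one must locate scales at which $F_\alpha$ looks like a uniformly spaced arithmetic progression. To this end I would fix a large $N$, take the centre $x=N^{-\alpha}\in F_\alpha$, the outer scale $\delta=(1-2^{-\alpha})N^{-\alpha}$ so that $B_\delta(x)$ captures the whole cluster $\{n^{-\alpha}:N\le n\le 2N\}$, and the inner scale $\rho$ a fixed small multiple of $N^{-\alpha-1}$ (half the minimal gap in the cluster). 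The gap estimate shows these $\approx N$ points are pairwise more than $2\rho$ apart, so any $\rho$-ball meets at most one of them and $\cover(B_\delta(x)\cap F_\alpha,\rho)\ge cN$; since $\delta/\rho\approx c'N$, this yields $\cover(B_\delta(x)\cap F_\alpha,\rho)\ge c''(\delta/\rho)$ along a sequence with $\delta\to0$ and $\rho/\delta\to0$. Consequently, for every $s<1$ the quantity $\cover/(\delta/\rho)^s\approx(\delta/\rho)^{1-s}\to\infty$, so no admissible constant exists and the Assouad condition fails for all $s<1$, giving $\dim_{\rm A}F_\alpha=1$. Care is needed only in tracking the $\alpha$-dependent constants in the gap estimate and in verifying the inclusion of the cluster in $B_\delta(x)$.
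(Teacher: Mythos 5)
Your proposal is correct and follows essentially the same route as the paper: the Assouad lower bound comes from the same dyadic cluster $\set{n^{-\alpha}: k\le n\le 2k}$ viewed at outer scale $\delta\sim k^{-\alpha}$ and inner scale $\rho\sim k^{-\alpha-1}$, the lower Assouad dimension from an isolated point, and the box-counting exponent (which the paper simply cites from Falconer/Robinson) from the crossover index $\delta^{-1/(1+\alpha)}$. One small correction: your parenthetical claim that windows centred at $0$ ``only recover the box-counting exponent'' is false --- the paper's own proof centres at $0$, taking $\delta_k=k^{-\alpha}$ and $\rho_k=(2k)^{-\alpha}-(2k+1)^{-\alpha}$, and still extracts the full bound $\cover(B_{\delta_k}(0)\cap F_\alpha,\rho_k)\ge c\,\delta_k/\rho_k$, because the $\approx k$ pairwise $\rho_k$-separated points of the block $k+1\le n\le 2k-1$ dominate the count whether or not the tail near $0$ is also inside the ball.
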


\begin{proof}
See Robinson \cite{JCR} Example 13.4 or Falconer \cite{BkFalconer14} Example 2.7 for the derivation of the box-counting dimensions of $F_{\alpha}$.

For the Assouad dimension consider for each $k\in\mathbb{N}$ the lengths $\delta_{k}=k^{-\alpha}$ and $\rho_{k}=\left(2k\right)^{-\alpha}-\left(2k+1\right)^{-\alpha}$. As $k^{\alpha} < \left(2k\right)^{\alpha}$ it is clear that $0<\rho_{k}<\delta_{k}$. Applying the mean value theorem to the function $f\left(x\right)=x^{-\alpha}$ we obtain
\begin{align}
\alpha \left(2k+1\right)^{-\alpha-1} \leq \rho_{k} &\leq \alpha \left(2k\right)^{-\alpha-1}\notag
\intertext{from which it follows that}
\frac{1}{\alpha}2^{\alpha+1}k\leq \delta_{k}/\rho_{k} \leq \frac{k^{-\alpha}}{\alpha\left(2k+1\right)^{-\alpha-1}} &= \frac{1}{\alpha}\left(\frac{k}{2k+1}\right)^{-\alpha-1}k \leq \frac{1}{\alpha} 3^{\alpha+1} k \label{example ratio bound}
\end{align}
as $\left(2k+1\right)/k \leq 3$ for all $k\in\mathbb{N}$.

Next, observe that $B_{\delta_{k}}\left(0\right)\cap F_{\alpha} = \set{n^{-\alpha}}_{n \geq k+1}$ and that the distance from any of the $k-1$ points in $F_{\alpha,k}=\set{n^{-\alpha} \mid k+1 \leq n \leq 2k-1}$ to any other point of $F_{\alpha}$ is greater that $\rho_{k}$. Consequently, any covering of $B_{\delta_{k}}\left(0\right)\cap F_{\alpha}$ by $\rho_{k}$ balls with centres in $F_{\alpha}$ requires at least $k-1$ balls for the elements of $F_{\alpha}^{k}$ plus at least one ball for the remaining points, hence from \eqref{example ratio bound}
\begin{align}
N\left(B_{\delta_{k}}\left(0\right) \cap F_{\alpha}, \rho_k \right) &\geq k
\geq \alpha 3^{-\alpha -1} \left(\delta_{k}/\rho_{k}\right) & \forall k\in\mathbb{N}.\label{example Assouad lower bound}
\end{align}

Next, suppose for a contradiction that $\dim_{\rm A}F_{\alpha}<1$, so there exist positive constants $\varepsilon,\delta_{0}$ and $C$ such that
\begin{align*}
N\left(B_{\delta}\left(0\right)\cap F_{\alpha},\rho\right) &\leq C \left(\delta/\rho\right)^{1-\varepsilon} & 0<\rho<\delta\leq \delta_{0}.
\end{align*}
As $\delta_{k} < \delta_{0}$ for all $k\in\mathbb{N}$ sufficiently large it follows from \eqref{example Assouad lower bound} that 
\begin{align*}
\alpha 3^{-\alpha -1} \left(\delta_{k}/\rho_{k}\right) \leq N\left(B_{\delta_{k}}\left(0\right) \cap F_{\alpha}, \rho_k \right) \leq C \left(\delta_{k}/\rho_{k}\right)^{1-\varepsilon}
\end{align*}
hence $\left(\delta_{k}/\rho_{k}\right)^{\varepsilon} \leq C\alpha^{-1} 3^{\alpha+1}$ which is a contradiction as $\delta_{k}/\rho_{k}$ is unbounded from \eqref{example ratio bound}.
We conclude that $\dim_{\rm A}F_{\alpha}=1$, as subsets of the real line have Assouad dimension at most $1$ (see Luukkainen \cite{Luuk}).

For the lower Assouad dimension observe that $1\in F_{\alpha}$ is an isolated point so
  \[
  \inf_{x\in F_{\alpha}} \cover(B_{\delta}(x)\cap
    F_{\alpha},\rho) = 1
  \]
  for all $\delta,\rho$ with $0<\rho<\delta<1-2^{-\alpha}$ as $B_{\delta}(1)\cap F_{\alpha}=\set{1}$ for such $\delta$ and this isolated point can be covered by a single ball of any radius.
\end{proof}

In fact, the above argument demonstrates that any set with an isolated point must have lower Assouad dimension equal to $0$. This could be viewed as an undesirable property for a dimension to have, as adding an isolated point to a set $F$ with $\dim_{\rm LA}F > 0$ has the effect of \emph{reducing} the lower Assouad dimension to zero.

Equi-homogeneous sets, which we define in Section \ref{section - equi-homogeneity}, are those for which the quantity $N\left(B_{\delta}\left(x\right)\cap F,\rho\right)$ scales identically at every point $x\in F$. We will see that the equi-homogeneity property essentially removes the local dependence in the definitions of the Assouad and lower Assouad dimensions. Before we introduce equi-homogeneity we finish this section by recalling some familiar notions of regularity.

\subsection{Regularity of sets} 
Dimensional equality is a common notion of regularity of sets, which is enjoyed by all smooth manifolds. Indeed, Mandelbrot \cite{Mandelbrot75} first defined ``fractal'' sets as those with unequal topological and Hausdorff dimension, although this definition has fallen out of favour in recent years. Nevertheless, equality of dimensions is sufficient for some good properties of sets. For example if the Hausdorff and upper box-counting dimensions of $F\subset \R^{n}$ coincide, then the product set inequality
\[
\dim_{\rm H}\left(F\times E\right) \geq \dim_{\rm H} F + \dim_{\rm H} E
\]
is actually an equality for arbitrary $E\subset \R^{m}$ (see Corollary 7.4 of Falconer \cite{BkFalconer14}). Equality in the Assouad and lower Assoaud dimensions is particularly powerful: it follows from the inequalities \eqref{dimension inequalities 1} and \eqref{dimension inequalities 2} that if the Assouad and lower Assouad dimensions coincide then all of these dimensions agree.

We now recall the definition of Ahlfors-David regularity.
\begin{definition}\label{definition - Ahlfors-David}
A bounded set $F\subset X$ is \emph{Ahlfors-David $s$-regular} if there exists a constant $C>0$ such that
\begin{align}\label{Ahlfors-David inequality}
C^{-1} \delta^{s} \leq \mathcal{H}^{s}\left(B_{\delta}\left(x\right)\cap F\right) \leq C \delta^{s}
\end{align}
for all $x\in F$ and all $0<\delta <\diameter F$, where $\mathcal{H}^{s}$ is the usual $s$-dimensional Hausdorff measure.
\end{definition}
If $F$ is Ahlfors-David $s$-regular then by taking $\delta=\diameter F$ it immediately follows that $0<\mathcal{H}^{s}\left(F\right)<\infty$, which is precisely that $F$ is an `$s$-set' (see Falconer \cite{BkFalconer14} pp.48) so in particular $\dim_{\rm H}F=s$.

Remarkably, if \eqref{Ahlfors-David inequality} holds with $\mathcal{H}^{s}$ replaced by \emph{any} Borel measure then it follows that $F$ is Ahflors-David $s$-regular (see Chapter 8 of Heinonen \cite{Heinonen}).

Ahlfors-David regularity is a strong notion of regularity in the sense that it guarantees the equality of all the dimensions in \eqref{dimension inequalities 1}. We prove this in part of Theorem \ref{theorem - ahlfors implies equih} in the next section.

\section{Equi-homogeneity}\label{section - equi-homogeneity}

From Definitions \ref{definition - Assouad dimension} and \ref{definition - lower Assouad dimension} we see that the Assouad and lower Assouad dimensions respectively capture the maximum and minimum cardinalities of local covers. The distinct values for the dimensions in the example of Proposition \ref{proposition - different scaling} reflect the different cardinalities of covers at the `high detail' limit point $0\in F$ and the `low detail' isolated point $1\in F$.

We introduce the equi-homogeneity property to examine sets where there is no such `local dependence' on the cardinalities of local covers. Roughly this means that at each fixed length-scale an equi-homogeneous set exhibit identical dimensional detail near every point. 

\begin{definition}\label{equihom}
  We say that a set $F\subset X$ is \emph{equi-homogeneous} if for all $\delta_{0}>0$ there exist constants $M\geq 1$ and $c_{1},c_{2}>0$ such that
  \begin{align}\label{equihom inequality}
    \sup_{x\in F} \cover(B_{\delta}(x)\cap F,\rho)&\leq M
    \inf_{x\in F} \cover(B_{c_{1}\delta}(x)\cap F,c_{2}\rho)
  \end{align}
  for all $\delta,\rho$ with $0<\rho<\delta\leq \delta_{0}$.
\end{definition}

Note that as $\cover(B_{\delta}(x)\cap F,\rho)$ increases with $\delta$ and decreases with $\rho$, by replacing the $c_{i}$ with $1$ if necessary we can assume without loss of generality that $c_{2}\leq 1 \leq c_{1}$ in \eqref{equihom inequality}.

\subsection{Equivalent definitions}
As with the definitions of the Assouad dimensions, for a large class of sets it is sufficient that \eqref{equihom inequality} holds only for \textit{some} $\delta_{0}$.

\begin{lemma}\label{lemma - equiextension}
  If $F\subset X$ is totally bounded or $X$ is a locally totally bounded space then $F$ is equi-homogeneous if and only if there exist constants $M\geq 1$ and $c_{1},c_{2},\delta_{1}>0$ such that
  \begin{align}
    \sup_{x\in F} \cover(B_{\delta}(x)\cap
      F,\rho)&\leq M \inf_{x\in F}
    \cover(B_{c_{1}\delta}(x)\cap F,c_{2}\rho)\label{equihom equivalent}
  \end{align}
  for all $\rho,\delta$ satisfying $0<\rho<\delta\leq\delta_{1}$.
\end{lemma}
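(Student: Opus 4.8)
The reverse implication is immediate: if $F$ is equi-homogeneous then Definition~\ref{equihom} applied with $\delta_0=1$ yields constants $M,c_1,c_2$ for which \eqref{equihom equivalent} holds with $\delta_1=1$. All of the content therefore lies in the converse, so suppose \eqref{equihom equivalent} holds for fixed $M,c_1,c_2$ on $0<\rho<\delta\le\delta_1$; by the remark following Definition~\ref{equihom} I may assume $c_2\le1\le c_1$. Given a prescribed $\delta_0>0$ I must produce constants validating \eqref{equihom inequality} on $0<\rho<\delta\le\delta_0$. If $\delta_0\le\delta_1$ this is just a restriction of the hypothesis, so I may assume $\delta_0>\delta_1$; the only scales requiring new work are $\delta\in(\delta_1,\delta_0]$.

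The plan is to push such a large $\delta$ back down to the hypothesis by splitting a cover at the fixed intermediate scale $\delta_1$ via Lemma~\ref{lemma - refinement}. Fix $\delta\in(\delta_1,\delta_0]$ and $0<\rho<\delta$. When $\rho\ge\delta_1$ we have $\cover(B_\delta(x)\cap F,\rho)\le\cover(B_\delta(x)\cap F,\delta_1)$, so this case is subsumed by the coarse estimate below; when $\rho<\delta_1<\delta$, Lemma~\ref{lemma - refinement} with $r=\delta_1$ gives
\[
\cover(B_\delta(x)\cap F,\rho)\le\cover(B_\delta(x)\cap F,\delta_1)\,\sup_{y\in F}\cover(B_{\delta_1}(y)\cap F,\rho).
\]
The second factor is a cover at scale $\delta_1$, so the hypothesis bounds it by $M\inf_w\cover(B_{c_1\delta_1}(w)\cap F,c_2\rho)$, and since $c_1\delta_1\le c_1\delta$ while $\cover$ is monotone this is at most $M\inf_w\cover(B_{c_1\delta}(w)\cap F,c_2\rho)$, exactly the quantity I want on the right of \eqref{equihom inequality}. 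Everything thus reduces to bounding the coarse factor $\cover(B_\delta(x)\cap F,\delta_1)$ uniformly in $x\in F$; as this increases with $\delta$, it suffices to control $P\define\sup_{x\in F}\cover(B_{\delta_0}(x)\cap F,\delta_1)$.

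Establishing $P<\infty$ is the crux, and it is where the two standing hypotheses enter. If $F$ is totally bounded then $P\le\cover(F,\delta_1)<\infty$ trivially; indeed in this case one can bypass the refinement step altogether, since $\diameter F<\infty$ forces $B_{c_1\delta}(x)\supseteq F$ once $c_1\ge\diameter F/\delta_1$, whence $\cover(B_\delta(x)\cap F,\rho)\le\cover(F,\rho)=\inf_{y\in F}\cover(B_{c_1\delta}(y)\cap F,\rho)$ with $c_2=1$. If instead $X$ is locally totally bounded then each ball $B_{\delta_0}(x)$ is totally bounded, so each coarse cardinality $\cover(B_{\delta_0}(x)\cap F,\delta_1)$ is finite; the delicate point---and the main obstacle of the proof---is to upgrade this to the uniform finiteness $P<\infty$ as the centre $x$ ranges over $F$. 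Once $P<\infty$ is in hand the proof concludes by bookkeeping: taking $\tilde M=MP$, $\tilde c_1=\max(c_1,\diameter F/\delta_1)$ (the second term only in the totally bounded case) and $\tilde c_2=c_2$, one verifies \eqref{equihom inequality} on all of $0<\rho<\delta\le\delta_0$, the enlargement of $c_1$ and the unchanged $c_2$ being harmless because they only increase the infimum on the right-hand side.
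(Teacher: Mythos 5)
Your overall strategy is exactly the paper's: reduce to $\delta\in(\delta_1,\delta_0]$, split at the intermediate scale $r=\delta_1$ via Lemma~\ref{lemma - refinement}, apply the hypothesis \eqref{equihom equivalent} to the factor $\sup_{y\in F}\cover(B_{\delta_1}(y)\cap F,\rho)$, use monotonicity to replace $c_1\delta_1$ by $c_1\delta$, and absorb the remaining coarse factor into the constant $M$. The totally bounded case is also handled as in the paper (via $\cover(F,\delta_1)$). The problem is that you stop at the one step that carries the real content: you define $P\define\sup_{x\in F}\cover(B_{\delta_0}(x)\cap F,\delta_1)$, correctly observe that local total boundedness only gives finiteness of each individual term and not of the supremum over $x\in F$, call this ``the crux'' and ``the main obstacle'' --- and then never resolve it. As written, the locally totally bounded case of the lemma is not proved: if $F$ is unbounded, nothing you have said rules out $P=\infty$, and then $\tilde M=MP$ is not a constant.

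For comparison, the paper closes this step by discarding the intersection with $F$ altogether: it bounds $\cover(B_{\delta}(x)\cap F,\delta_1)\leq\cover(B_{\delta_0}(x),\delta_1)$ and then replaces the latter by the covering number $\cover(B_{\delta_0}(0),\delta_1)$ of a single reference ball, which is one fixed finite constant independent of $x$ (and similarly, in the trivial regime $\delta_1\leq\rho$, it bounds $\cover(B_{\delta}(x),\rho)$ by the same quantity). This is where the hypothesis on the ambient space is actually consumed: the bound by a single ball's covering number implicitly uses the homogeneity of the space (all $\delta_0$-balls having the same covering data, as in a normed space), rather than only the pointwise finiteness you extracted from local total boundedness. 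If you want to complete your argument you must either invoke such a uniform bound $\sup_{x}\cover(B_{\delta_0}(x),\delta_1)<\infty$ explicitly, or derive uniformity in $x$ from the hypothesis \eqref{equihom equivalent} itself; merely noting that each term is finite does not suffice.
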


\begin{proof}
It is sufficient to prove that for each $\delta_{0}>0$ the inequality \eqref{equihom equivalent} can be extended to hold for all $\delta,\rho$ with $0<\rho<\delta\leq \delta_{0}$ up to a change in constant $M$. Let $\delta_{0}>0$ be arbitrary. If $\delta_{0}\leq \delta_{1}$ then there is nothing to prove, so we assume that $\delta_{0}>\delta_{1}$.
Suppose that $\delta,\rho$ lie in the range $0<\rho<\delta_{1}<\delta\leq \delta_{0}$ and let $x\in F$ be arbitrary. From Lemma \ref{lemma - refinement} with $r=\delta_{1}$ we obtain
\begin{align}
    \cover(B_{\delta}(x)\cap F,\rho)
	&\leq \cover(B_{\delta}(x)\cap F,\delta_{1})
		\sup_{x\in F} \cover(B_{\delta_{1}}
			(x)\cap F,\rho)\notag\\
    &\leq \cover(B_{\delta}(x)\cap F,\delta_{1})M \inf_{x\in F} \cover \left(B_{c_{1}\delta_{1}}\left(x\right)\cap F,c_{2}\rho\right)\notag\\
    &\leq \cover(B_{\delta}(x)\cap F,\delta_{1})M \inf_{x\in F} \cover \left(B_{c_{1}\delta}\left(x\right)\cap F,c_{2}\rho\right)\label{extension bound small rho}
  \end{align}
which follows from \eqref{equihom equivalent} and the fact that $\delta>\delta_{1}$.

Taking the first case assume that $X$ is a locally totally bounded space. It follows from \eqref{extension bound small rho} that for
  $0<\rho<\delta_{1}<\delta\leq \delta_{0}$
  \[
    \cover(B_{\delta}(x)\cap F,\rho)\leq
    \cover(B_{\delta_{0}}(0),\delta_{1}) M \inf_{x\in F} \cover \left(B_{c_{1}\delta}\left(x\right)\cap F,c_{2}\rho\right),
  \]
  and trivially for
      $\delta_{1}\leq\rho<\delta\leq\delta_{0}$ that
  \[
    \cover(B_{\delta}(x)\cap F,\rho)\leq
    \cover(B_{\delta}(x),\rho)\leq
    \cover(B_{\delta_{0}}(x),\delta_{1})\leq
    \cover(B_{\delta_{0}}(0),\delta_{1})
    M \inf_{x\in F} \cover \left(B_{c_{1}\delta}\left(x\right)\cap F,c_{2}\rho\right)
  \]
  as $M \inf_{x\in F} \cover \left(B_{c_{1}\delta}\left(x\right)\cap F,c_{2}\rho\right)\geq 1$.  Consequently, with $M_{\delta_{0}}=\cover(B_{\delta_{0}}(0),\delta_{1})M$ we obtain
  \begin{align*}
    \sup_{x\in F}\cover(B_{\delta}(x)\cap
      F,\rho)&\leq M_{\delta_{0}} \inf_{x\in F}\cover(B_{c_{1}\delta}(x)\cap
      F,c_{2}\rho)
    \qquad &\forall\
    \delta,\rho\quad\text{with}\quad0<\rho<\delta\le\delta_0,
  \end{align*}
  so the constant $M_{\delta_{0}}$ is sufficient to extend \eqref{equihom equivalent} to all $0<\rho<\delta\leq \delta_{0}$.

  Taking the second case assume that $F \subset X$ is totally bounded. It follows from \eqref{extension bound small rho} that for
  $0<\rho<\delta_{1}<\delta\leq \delta_{0}$
  \begin{align*}
    \cover(B_{\delta}(x)\cap F,\rho)&\leq
    \cover(F,\delta_{1}) M \inf_{x\in F} \cover \left(B_{c_{1}\delta}\left(x\right)\cap F,c_{2}\rho\right),
    \intertext{and again for $\delta_{1}\leq \rho<\delta\leq
      \delta_{0}$ that} \cover(B_{\delta}(x)\cap
      F,\rho)&\leq \cover(F,\delta_{1})\leq \cover\left(F,\delta_{1}\right) M \inf_{x\in F} \cover \left(B_{c_{1}\delta}\left(x\right)\cap F,c_{2}\rho\right).
  \end{align*}
  Consequently, the constant $M^{\prime}=\cover(F,\delta_{1})M$ is sufficient to extend \eqref{extension bound small rho} to all $0<\rho<\delta\leq \delta_{0}$.
  
The converse implication follows immediately from the definition of equi-homogeneity.
\end{proof}

We remark that for totally bounded sets $F\subset X$ the constant $M^{\prime}$ does not depend upon $\delta_{0}$, so the inequality \eqref{equihom equivalent} can be extended to hold for all $\delta,\rho$ with $0<\rho<\delta$.
If $F$ is totally bounded then we can find $M\geq 1$ such that \eqref{equihom inequality} holds for all $\rho,\delta$ with $0<\rho<\delta$.

In normed spaces that are locally totally bounded (such as Euclidean space) there is an even more elementary formulation that does not require the constants $c_{1},c_{2}$.
\begin{lemma}\label{lemma - euclideanequiextension}
  Let $X$ be a normed space that is locally totally bounded. A set
  $F\subset X$ is equi-homogeneous if and only if there exist
  constants $M\geq 1$, $\delta_{1}\geq 1$ such that
  \begin{align}
    \sup_{x\in F} \cover(B_{\delta}(x)\cap F,\rho)&\leq M
    \inf_{x\in F} \cover(B_{\delta}(x)\cap F,\rho)
  \end{align}
  for all $\rho,\delta$ with $0<\rho<\delta\leq \delta_{1}$.
\end{lemma}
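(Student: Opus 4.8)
The converse implication is immediate: taking $c_{1}=c_{2}=1$ in \eqref{equihom equivalent} shows that the displayed inequality is a special case of the hypothesis of Lemma \ref{lemma - equiextension}, so any $F$ satisfying it is equi-homogeneous. The substance of the lemma is therefore the forward implication, namely that in a normed, locally totally bounded space the auxiliary constants $c_{1}\ge 1\ge c_{2}$ appearing in the definition can be eliminated.

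The plan is to exploit the scaling structure of a normed space. First I would record the elementary fact that $\cover(B_{t\delta}(x),\delta)$ is independent of both $x$ and $\delta$: the translation $y\mapsto y-x$ and the dilation $y\mapsto y/\delta$ are covering-preserving bijections carrying $B_{t\delta}(x)$ onto $B_{t}(0)$ and $\delta$-balls onto $1$-balls, so this quantity equals a finite, nondecreasing constant $\kappa(t):=\cover(B_{t}(0),1)$ (finiteness from local total boundedness). Combining $\kappa$ with Lemma \ref{lemma - refinement} yields two bounded-distortion estimates, uniform in $x$: coarsening the covering radius from $\rho$ to $\rho/c_{2}$ costs at most a factor $\kappa(1/c_{2})$ (apply Lemma \ref{lemma - refinement} with $r=\rho/c_{2}$ and bound $\cover(B_{\rho/c_{2}}(z)\cap F,\rho)\le\kappa(1/c_{2})$), while enlarging the ball radius from $\delta/c_{1}$ to $\delta$ costs at most a factor $\kappa(2c_{1})$ (cover $B_{\delta}(x)$ by $\kappa(2c_{1})$ balls of radius $\delta/(2c_{1})$, replace each centre by a nearby point of $F$, and enlarge the radius to $\delta/c_{1}$). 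Crucially, both estimates bound a cover of a larger ball at a finer scale by covers of smaller balls at a coarser scale, which is exactly the direction that covering a big ball by small balls naturally provides.

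A direct comparison of the two sides of the definition at the pair $(\delta,\rho)$ fails: bounding $\inf_{x}\cover(B_{c_{1}\delta}(x)\cap F,c_{2}\rho)$ above by $\inf_{x}\cover(B_{\delta}(x)\cap F,\rho)$ would require exhibiting a single point at which the larger ball has a small cover, and covering a larger ball inevitably reintroduces the supremum. The key move that avoids this is to apply equi-homogeneity not at $(\delta,\rho)$ but at the rescaled pair $(\delta/c_{1},\rho/c_{2})$. Since $c_{1}\cdot(\delta/c_{1})=\delta$ and $c_{2}\cdot(\rho/c_{2})=\rho$, the right-hand side of \eqref{equihom equivalent} then becomes precisely $\inf_{x}\cover(B_{\delta}(x)\cap F,\rho)$, while its left-hand side is $\sup_{x}\cover(B_{\delta/c_{1}}(x)\cap F,\rho/c_{2})$. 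Applying the two distortion estimates of the previous paragraph at the level of suprema gives $\sup_{x}\cover(B_{\delta}(x)\cap F,\rho)\le \kappa(2c_{1})\kappa(1/c_{2})\sup_{x}\cover(B_{\delta/c_{1}}(x)\cap F,\rho/c_{2})$, and chaining this with the rescaled equi-homogeneity inequality produces the clean bound with $M'=\kappa(2c_{1})\kappa(1/c_{2})M$.

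The main obstacle is a range restriction hidden in this trick: the rescaled pair satisfies the admissibility condition $\rho/c_{2}<\delta/c_{1}$ only when $\rho/\delta<c_{2}/c_{1}$, so the argument above covers only sufficiently small ratios $\rho/\delta$. The complementary near-diagonal range $c_{2}/c_{1}\le \rho/\delta<1$ must be handled separately, but there it is trivial: the ratio $\delta/\rho$ is bounded by $c_{1}/c_{2}$, so $1\le \cover(B_{\delta}(x)\cap F,\rho)\le \kappa(c_{1}/c_{2})$ for every $x$, and the clean inequality holds with constant $\kappa(c_{1}/c_{2})$. Finally, applying equi-homogeneity at the smaller radius $\delta/c_{1}$ requires first extending it to a large enough range of scales, which is exactly what Lemma \ref{lemma - equiextension} provides and which allows $\delta_{1}\ge 1$ as required. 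Taking $M$ to be the larger of the two constants produced then completes the proof.
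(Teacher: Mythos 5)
Your proof is correct and follows essentially the same route as the paper: apply the equi-homogeneity inequality at rescaled parameters so that its right-hand side becomes $\inf_{x\in F}\cover(B_{\delta}(x)\cap F,\rho)$, then absorb the resulting discrepancy on the left-hand side using Lemma \ref{lemma - refinement} together with the normed-space scaling constants $\cover(B_{1}(0),\cdot)$. The only differences are cosmetic: the paper rescales only $\delta$ and converts $c_{2}\rho$ to $\rho$ on the infimum side, whereas you rescale both variables and perform both conversions on the supremum side, and you treat the near-diagonal range $\rho/\delta\ge c_{2}/c_{1}$ as an explicit separate case where the paper folds it into the triviality of its inequality \eqref{movec1}.
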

\begin{proof}
  The `if' direction follows immediately from Lemma \ref{lemma - equiextension}. To prove the converse fix $\delta_{0}>0$ and let $M\geq 1$ and $c_{1},c_{2}>0$ with $c_{2}\leq 1 \leq c_{1}$ be such that
  \begin{align*}
    \sup_{x\in F} \cover(B_{\delta}(x)\cap F,\rho)
    \leq M \inf_{x\in F} \cover(B_{c_{1}\delta}(x)\cap
      F,c_{2}\rho)
  \end{align*}
  for all $0<\rho<\delta\leq \delta_{0}$.

  First, observe that replacing $\delta$ by $\delta/c_1$ we can assume that
  \begin{equation}\label{movec1}
    \sup_{x\in F} \cover(B_{\delta/c_1}(x)\cap F,\rho)\leq M
    \inf_{x\in F} \cover(B_{\delta}(x)\cap F,c_{2}\rho)
  \end{equation}
  for all $\delta,\rho$ with $0<\rho<\delta/c_1$, $\delta\le c_1\delta_0$. Note that if $\rho\geq \delta/c_1$ then the above inequality holds trivially, since the left-hand side is $1$ and the right-hand side is at least $M\ge 1$; so in fact \eqref{movec1} holds for all $0<\rho<\delta\leq \delta_{1}:= c_1\delta_0$.

  Now, it follows from \eqref{refinement} with $r=\delta/c_{1}$ that
  \begin{align}
    \cover(B_{\delta}(x)\cap F,\rho)&\leq
    \cover(B_{\delta}(x),\delta/c_{1}) \sup_{x\in F}
    \cover(B_{\delta/c_{1}}(x)\cap F,\rho) \notag
    \intertext{for all $x\in F$, so setting $N_{1}\define
      \cover(B_{\delta}(x),\delta/c_{1}) =
      \cover(B_{1}(0),1/c_{1})$, which follows as $X$
      is a normed space, we obtain} \sup_{x\in F}
    \cover(B_{\delta}(x)\cap F,\rho) &\leq N_{1}
    \sup_{x\in F} \cover(B_{\delta/c_{1}}(x)\cap
      F,\rho).\label{equiextension 1}
  \end{align}
  It also follows from \eqref{refinement} that for any $r>0$
  \begin{align}
    \cover(B_{\delta}(x)\cap F,c_{2}\rho)
	&\leq \cover(B_{\delta}(x)\cap F,r)
	\sup_{x\in F}\cover(B_{r}(x)\cap F,c_{2}\rho)\notag\\
    &\leq \cover(B_{\delta}(x)\cap F,r)
		\sup_{x\in F}\cover(B_{r}(x),c_{2}\rho)\notag\\
    &=\cover(B_{\delta}(x)\cap
      F,r)\cover(B_{r}(0),c_{2}\rho)\notag
    \intertext{so taking $r=\rho$, setting
      $N_{2}=\cover(B_{\rho}(0),c_{2}\rho)
		=\cover(B_{1}(0),c_{2})$,
      which again follows as $X$ is a normed space, and taking the
      infimum over $x\in F$ we obtain} \inf_{x\in
      F}\cover(B_{\delta}(x)\cap F,c_{2}\rho) &\leq
    \inf_{x\in F} \cover(B_{\delta}(x)\cap F,\rho)
    N_{2}. \label{equiextension 2}
  \end{align}
  It follows from \eqref{movec1}, \eqref{equiextension 1} and
  \eqref{equiextension 2} that for all $\rho,\delta$ with
  $0<\rho<\delta\leq \delta_{1}$
  \begin{align*}
    \sup_{x\in F}\cover(B_{\delta}(x)\cap F,\rho)
    \leq M\frac{N_{1}}{N_{2}}\inf_{x\in F}
    \cover(B_{\delta}(x)\cap F,\rho)
  \end{align*}
  so we conclude from Lemma \ref{lemma - euclideanequiextension} that
  $F$ is equi-homogeneous.
\end{proof}

For reasonable choices of product metric the product of two equi-homogeneous sets is also equi-homogeneous  (see Olson, Robinson \& Sharples \cite{ORS}).

\subsection{Equi-homogeneity and the Assouad dimensions}\label{section - equi-homogeneity and assouad}
Equi-homogeneous sets have identical dimensional detail near every point.
The Assouad and lower Assouad dimensions of an equi-homogeneous set can be found by examining the cardinality of the local cover at an arbitrary point.

\begin{lemma}\label{lemma - Assouad def for equihom}
If $F\subset X$ is equi-homogeneous then for any $x\in F$
\begin{enumerate}
\item $\dim_{\rm A} F$ is the infimum over all $s\in \R^{+}$ such that for all $\delta_{0}>0$ there exists a constant $C>0$ for which \label{Assouad def for equihom}
\begin{align}\label{Assouad scale equihom}
\cover\left(B_{\delta}\left(x\right)\cap F,\rho\right) \leq C\left(\delta/\rho\right)^{s}\quad \forall\ \delta,\rho \quad \text{with} \quad 0<\rho<\delta\leq\delta_{0}.
\end{align}
and,
\item $\dim_{\rm LA} F$ is the supremum over all $s\in \R^{+}$ such that for all $\delta_{0}>0$ there exists a constant $C>0$ for which
\[
\cover\left(B_{\delta}\left(x\right)\cap F,\rho\right) \geq C\left(\delta/\rho\right)^{s}\quad \forall\ \delta,\rho \quad \text{with} \quad 0<\rho<\delta\leq\delta_{0}.
\]\label{lower Assouad def for equihom}
\end{enumerate}
\end{lemma}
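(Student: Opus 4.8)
The plan is to show that for an equi-homogeneous $F$ the single-point quantity $\cover(B_\delta(x)\cap F,\rho)$ is comparable, up to multiplicative constants and a bounded rescaling of the two length-scales, to both $\sup_{y\in F}\cover(B_\delta(y)\cap F,\rho)$ and $\inf_{y\in F}\cover(B_\delta(y)\cap F,\rho)$. Since the Assouad and lower Assouad dimensions are exactly the exponents of power-law growth of these supremum/infimum quantities (Definitions \ref{definition - Assouad dimension} and \ref{definition - lower Assouad dimension}), and since such exponents are insensitive both to multiplicative constants and to replacing $(\delta,\rho)$ by $(c_1\delta,c_2\rho)$, it follows that the single-point quantity captures the same exponents. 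I prove each of the two parts by a pair of inequalities: a trivial one coming from $\inf_y\le\cover(B_\delta(x)\cap F,\cdot)\le\sup_y$, and a reverse one that invokes \eqref{equihom inequality}. Throughout I use the normalisation $c_2\le 1\le c_1$ noted after Definition \ref{equihom}.

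For part 1, write $s_A$ for $\dim_{\rm A}F$ and $\tilde s(x)$ for the infimum described in \eqref{Assouad scale equihom}. The inequality $\tilde s(x)\le s_A$ is immediate, since $\cover(B_\delta(x)\cap F,\rho)\le\sup_{y\in F}\cover(B_\delta(y)\cap F,\rho)$ shows that every $s$ admissible for the supremum (that is, for $s_A$) is admissible for the single point. For the reverse inequality fix $s>\tilde s(x)$ and $\delta_0>0$; by definition there is $C$ with $\cover(B_\delta(x)\cap F,\rho)\le C(\delta/\rho)^s$ for $0<\rho<\delta\le c_1\delta_0$. Bounding the infimum in \eqref{equihom inequality} by its value at the chosen $x$ gives $\sup_{y\in F}\cover(B_\delta(y)\cap F,\rho)\le M\,\cover(B_{c_1\delta}(x)\cap F,c_2\rho)$, and since $c_2\le1\le c_1$ the scales satisfy $c_2\rho<c_1\delta\le c_1\delta_0$. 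Applying the single-point bound at these scales yields $\sup_{y\in F}\cover(B_\delta(y)\cap F,\rho)\le MC(c_1/c_2)^s(\delta/\rho)^s$ for all $0<\rho<\delta\le\delta_0$, so $s$ is admissible for $s_A$ and hence $s_A\le s$. Letting $s\downarrow\tilde s(x)$ gives $s_A\le\tilde s(x)$, completing part 1. The point that makes this direction clean is that $(\delta,\rho)$ remain the free variables throughout, so no rescaling of the output range is needed.

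Part 2 is dual. Writing $s_{LA}=\dim_{\rm LA}F$ and $\tilde t(x)$ for the supremum in the statement, the bound $\cover(B_\delta(x)\cap F,\rho)\ge\inf_{y\in F}\cover(B_\delta(y)\cap F,\rho)$ gives $\tilde t(x)\ge s_{LA}$ at once. For the reverse inequality I combine \eqref{equihom inequality}, in the form $\inf_{y\in F}\cover(B_{c_1\delta}(y)\cap F,c_2\rho)\ge M^{-1}\cover(B_\delta(x)\cap F,\rho)$, with the single-point lower bound $\cover(B_\delta(x)\cap F,\rho)\ge C(\delta/\rho)^s$ valid for $s<\tilde t(x)$. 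This is where the real difficulty lies: equi-homogeneity now delivers the infimum at the shifted scales $(c_1\delta,c_2\rho)$, so to read off a lower bound for $\inf_{y}\cover(B_{\delta'}(y)\cap F,\rho')$ in the output scales $\delta'=c_1\delta$, $\rho'=c_2\rho$ I must invert the substitution, and $\rho=\rho'/c_2<\delta'/c_1=\delta$ forces $\delta'/\rho'>c_1/c_2$; thus the substitution only covers the regime of large ratios, where it yields $\inf_{y}\cover(B_{\delta'}(y)\cap F,\rho')\ge M^{-1}C(c_2/c_1)^s(\delta'/\rho')^s$. In the complementary regime $1<\delta'/\rho'\le c_1/c_2$ the target $(\delta'/\rho')^s$ is at most $(c_1/c_2)^s$, so the trivial inequality $\cover\ge 1$ already furnishes $\inf_{y}\cover(B_{\delta'}(y)\cap F,\rho')\ge(c_2/c_1)^s(\delta'/\rho')^s$. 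Taking the constant to be the minimum over the two regimes shows that $s$ is admissible for $s_{LA}$, hence $s_{LA}\ge s$, and letting $s\uparrow\tilde t(x)$ completes part 2. I expect this matching of the admissible scale-ranges, rather than the power-law bookkeeping (which is routine), to be the only genuine obstacle.
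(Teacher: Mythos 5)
Your proof is correct and follows essentially the same route as the paper's: the trivial comparison of the single-point count with the supremum/infimum in one direction, and equi-homogeneity combined with a rescaling of $(\delta,\rho)$ to $(c_1\delta,c_2\rho)$ (absorbing $(c_1/c_2)^s$ into the constant) in the other. The paper writes out only part 1 and states that part 2 ``follows similarly''; your explicit handling of the regime $1<\delta'/\rho'\le c_1/c_2$ via the trivial bound $\cover\ge 1$ is precisely the detail needed to make that ``similarly'' rigorous.
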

\begin{proof}
Let $s^{*}$ be the infimum in \ref{Assouad def for equihom}.
Clearly \eqref{Assouad scale} implies \eqref{Assouad scale equihom} for all $x\in F$, so $\dim_{\rm A} F \geq s^{*}$.
Next, let $\delta_{0}>0$ be arbitrary. As $F$ is equi-homogeneous there exist constants $c_{1},c_{2},M>0$ with $c_{2}\leq 1 \leq c_{1}$ such that for all $x\in F$
\begin{align}\label{equihom at x}
\sup_{x\in F} \cover \left(B_{\delta}\left(x\right)\cap F,\rho\right) \leq M \cover \left(B_{c_{1}\delta}\left(x\right)\cap F,c_{2}\rho\right) \quad \forall\ \delta,\rho \quad \text{with} \quad 0<\rho<\delta\leq\delta_{0}
\end{align}

If $s>s^{*}$ then there exists a constant $C>0$ such that
\begin{align*}
\cover\left(B_{\delta}\left(x\right)\cap F,\rho\right) &\leq C\left(\delta/\rho\right)^{s}\quad \forall\ \delta,\rho \quad \text{with} \quad 0<\rho<\delta\leq c_{1}\delta_{0}.
\intertext{As $0<\rho<\delta\leq\delta_{0}$ implies $0<c_{2}\rho<c_{1}\delta\leq c_{1}\delta_{0}$ it follows that}
M
\cover \left(B_{c_{1}\delta}\left(x\right)\cap F,c_{2}\rho\right) &\leq MC\left(c_{1}\delta/c_{2}\rho\right)^{s} \quad \forall\ \delta,\rho \quad \text{with} \quad 0<\rho<\delta\leq\delta_{0}
\intertext{hence from \eqref{equihom at x}}
\sup_{x\in F} \cover \left(B_{\delta}\left(x\right)\cap F,\rho\right) &\leq MC\left(c_{1}/c_{2}\right)^{s}\left(\delta/\rho\right)^{s} \quad \forall\ \delta,\rho \quad \text{with} \quad 0<\rho<\delta\leq\delta_{0}.
\end{align*}
It follows that $s\geq \dim_{\rm A} F$, but as $s>s^{*}$ was arbitrary we conclude that $s^{*}\geq \dim_{\rm A}F$, hence $s^{*}=\dim_{\rm A}F$. Part \ref{lower Assouad def for equihom} follows similarly.
\end{proof}

In fact, for equi-homogeneous sets whose box-counting dimensions are equal and attained (i.e. \eqref{box-counting growth bounds} holds with $\varepsilon=0$) then can find the Assouad and lower Assouad dimensions in terms of the more elementary box-counting dimensions, which do not account for variance in local detail.

\begin{theorem}\label{theorem - equi-homogeneous Assouad equal to box}
  If a totally bounded set $F\subset X$ is equi-homogeneous, $F$
  attains both its upper and lower box-counting dimensions, and
  $\dim_{\rm LB}F=\dim_{\rm B}F$, then
\begin{align}\label{attainment theorem equality}
\dim_{\rm A}F=\dim_{\rm B}F=\dim_{\rm LB}F=\dim_{\rm LA}F
\end{align}
\end{theorem}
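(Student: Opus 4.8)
The plan is to write $d:=\dim_{\rm B}F=\dim_{\rm LB}F$ and, in view of the chain \eqref{dimension inequalities 1}, reduce the claim to the two bounds $\dim_{\rm A}F\le d$ and $\dim_{\rm LA}F\ge d$; together with $\dim_{\rm LA}F\le\dim_{\rm LB}F=d=\dim_{\rm B}F\le\dim_{\rm A}F$ these force all four dimensions to equal $d$. The attainment hypothesis, applied with $\dim_{\rm LB}F=\dim_{\rm B}F=d$ in \eqref{box-counting growth bounds} at $\varepsilon=0$, provides for each $\delta_0>0$ a constant $C\ge1$ with
\[
C^{-1}\delta^{-d}\le\cover(F,\delta)\le C\delta^{-d}\qquad\text{for }0<\delta\le\delta_0,
\]
that is, two-sided control of the \emph{global} cover. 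The whole argument consists of transferring this control to the \emph{local} covers $\cover(B_\delta(x)\cap F,\rho)$ by means of equi-homogeneity.

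First I would record two ``global-to-local'' comparisons. The upper one is essentially Lemma \ref{lemma - refinement} with the outer ball taken large enough to contain $F$ (so that its intersection with $F$ equals $F$), giving
\[
\cover(F,\rho)\le\cover(F,\delta)\,\sup_{x\in F}\cover(B_\delta(x)\cap F,\rho),
\]
which rearranges to a \emph{lower} bound on the supremal local cover. The complementary statement,
\[
\cover(F,\rho)\ge\cover(F,2\delta)\,\inf_{x\in F}\cover(B_\delta(x)\cap F,2\rho),
\]
is the crux of the proof and I expect it to be the main obstacle. I would prove it by a packing argument: take $P=\pack(F,2\delta)$ disjoint $2\delta$-balls centred at $y_1,\dots,y_P\in F$, so that $d_X(y_i,y_j)>4\delta$; since $\rho<\delta$, no $\rho$-ball can meet two of the sets $B_\delta(y_i)\cap F$. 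Hence in any cover of $F$ by $\cover(F,\rho)$ centred $\rho$-balls the sub-families meeting distinct $B_\delta(y_i)\cap F$ are disjoint, and---after replacing each $\rho$-ball meeting $B_\delta(y_i)\cap F$ by a $2\rho$-ball centred at a point of $B_\delta(y_i)\cap F$ lying inside it---each such sub-family has at least $\cover(B_\delta(y_i)\cap F,2\rho)$ members. Summing over $i$ and using $\cover(F,2\delta)\le\pack(F,2\delta)$ from \eqref{geometric inequalities} yields the displayed inequality; the only technical care needed is precisely this radius-doubling, which converts the arbitrary-centred balls of the global cover into the centred balls required by the definition of $\cover$.

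Finally I would close the loop using equi-homogeneity in the form \eqref{equihom inequality}, with constants $c_2\le1\le c_1$. For $\dim_{\rm A}F\le d$ I combine the second comparison, rescaled so that $c_1\delta$ and $c_2\rho$ take the roles of $\delta$ and $2\rho$, with the attainment estimates to bound $\inf_{x}\cover(B_{c_1\delta}(x)\cap F,c_2\rho)$ above by a constant multiple of $(\delta/\rho)^d$, and then apply \eqref{equihom inequality} to deduce $\sup_x\cover(B_\delta(x)\cap F,\rho)\le C'(\delta/\rho)^d$ on $0<\rho<\delta\le\delta_0$; by Definition \ref{definition - Assouad dimension} this gives $\dim_{\rm A}F\le d$. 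For $\dim_{\rm LA}F\ge d$ I use the first comparison with attainment to bound $\sup_x\cover(B_\delta(x)\cap F,\rho)$ below by a constant multiple of $(\delta/\rho)^d$, feed this into the right-hand side of \eqref{equihom inequality}, and relabel $c_1\delta\mapsto\delta$, $c_2\rho\mapsto\rho$ to obtain $\inf_x\cover(B_\delta(x)\cap F,\rho)\ge C''(\delta/\rho)^d$; Definition \ref{definition - lower Assouad dimension} then gives $\dim_{\rm LA}F\ge d$. Throughout, the factors $2$, $c_1$, $c_2$ and powers of $C$ are absorbed into the final constants, and since all estimates hold for every $\delta_0>0$ the two dimension bounds follow. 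The two concluding steps could equivalently be phrased through Lemma \ref{lemma - Assouad def for equihom}, which already reduces the Assouad dimensions to the local cover at a single point.
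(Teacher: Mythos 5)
Your argument is essentially correct, and it is worth noting that the paper itself gives no self-contained proof of this statement: it simply cites Olson, Robinson \& Sharples \cite{ORS} for the first equality of \eqref{attainment theorem equality} and asserts that the last follows by a ``simple adaptation''. Your proof supplies exactly the missing content. The two global-to-local comparisons are the right tools: the upper one, $\cover(F,\rho)\le\cover(F,\delta)\,\sup_{x\in F}\cover(B_\delta(x)\cap F,\rho)$, is indeed just the argument of Lemma \ref{lemma - refinement} applied to the whole set, and the lower one is obtained by a packing/separation argument of the expected kind, with the radius-doubling step correctly identified as the point where arbitrary covering balls must be converted into the centred balls required by the definition of $\cover$. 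Feeding the attained two-sided bound $C^{-1}\delta^{-d}\le\cover(F,\delta)\le C\delta^{-d}$ into these comparisons and then invoking equi-homogeneity (or, equivalently, Lemma \ref{lemma - Assouad def for equihom}) gives $\dim_{\rm A}F\le d$ and $\dim_{\rm LA}F\ge d$, which together with \eqref{dimension inequalities 1} forces equality throughout.

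One step needs repair, though it is not fatal. You assert that disjointness of the closed balls $B_{2\delta}(y_1),\dots,B_{2\delta}(y_P)$ forces $d_X(y_i,y_j)>4\delta$. That is true in Euclidean space (or any length space, via the midpoint), but the theorem is stated for an arbitrary metric space, where disjointness of two closed $2\delta$-balls only guarantees $d_X(y_i,y_j)>2\delta$. The conclusion you actually need --- that a single $\rho$-ball cannot meet both $B_\delta(y_i)\cap F$ and $B_\delta(y_j)\cap F$ --- still follows directly from disjointness provided $\rho<\delta/2$: if $p\in B_\rho(z)\cap B_\delta(y_i)$ and $q\in B_\rho(z)\cap B_\delta(y_j)$, then $d_X(p,y_j)\le 2\rho+\delta<2\delta$, so $p\in B_{2\delta}(y_i)\cap B_{2\delta}(y_j)$, a contradiction. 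Restricting to $\rho<\delta/2$ costs nothing, since on the excluded range $\delta/2\le\rho<\delta$ one has $1\le\delta/\rho\le 2$ and the desired bounds hold trivially after adjusting the constant. (Alternatively, replace the packing by a maximal $4\delta$-separated subset of $F$, whose cardinality is at least $\cover(F,4\delta)$; this avoids both the separation issue and any reliance on the first inequality of \eqref{geometric inequalities}, at the price of replacing $2\delta$ by $4\delta$ in the constants.) With this adjustment your proof is complete.
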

\begin{proof}
See Olson, Robinson \& Sharples \cite{ORS} for the first equality of \eqref{attainment theorem equality}. A simple adaptation of their argument yields the last equality.
\end{proof}

Consequently, equi-homogeneous sets with equal, attained box-counting dimensions satisfy the strong dimensional equality $\dim_{\rm LA}F=\dim_{\rm A} F$.

In general, self-similar sets can have distinct Assouad and lower Assouad dimensions (see Fraser \cite{Fraser2014}), although these dimensions coincide for self-similar sets that satisfy the Moran open-set condition (see Corollary 2.11 of Fraser \cite{Fraser2013}). In Theorem \ref{theorem - homogeneous case} we show that sets in this large class are also equi-homogeneous, however we will now show that equi-homogeneity is not equivalent to the condition $\dim_{\rm A}F=\dim_{\rm LA}F$.

The Assouad and lower Assouad dimensions encode the scaling of the two distinct quantities $\sup_{x\in F} \cover(B_\delta(x)\cap F,\rho)$, and $\inf_{x\in F} \cover(B_\delta(x)\cap F,\rho)$, which can scale very differently as we have seen in the example of Proposition \ref{proposition - different scaling}.
However, in light of Lemma \ref{lemma - Assouad def for equihom}, for equi-homogeneous sets the Assouad and lower Assouad dimensions encode the scaling of the single quantity $\cover\left(B_{\delta}\left(x\right)\cap F,\rho\right)$ in a similar way to the box-counting dimension scaling law \eqref{box-counting growth bounds}.

Even though the minimal and maximal local covers scale identically for an equi-homogeneous set, the Assouad and the lower Assouad dimensions can be distinct. Rather than being the consequence of the different scaling of the minimal and maximal local cover (as in the example of Proposition \ref{proposition - different scaling}), any difference in these dimensions is due to an oscillation in the growth of the covers $\cover\left(B_{\delta}\left(x\right)\cap F,\rho\right)$. If this oscillation is sufficiently large then the upper and lower bounds on the growth of the covers (provided respectively by the Assouad and lower Assouad dimensions) are distinct. This is analogous to how distinct box-counting dimensions are due to a large oscillation in the growth of $\cover\left(F,\delta\right)$ (see Robinson \& Sharples \cite{RobinsonSharples13RAEX}).

We now give an example of the oscillating growth of local covers, and the resulting difference in the Assouad dimensions. As shown in the introduction, the generalised Cantor sets that were constructed in Section 3 of Olson, Robinson \& Sharples \cite{ORS} may also be obtained as the pullback attractors of the non-autonomous iterated function systems given by \eqref{gencant}.
These systems satisfy the hypothesis of Theorem \ref{theorem - homogeneous case} (with Moran open sets $U^k=(0,1)$ for all $k$) and are therefore equi-homogeneous (we give a direct proof that generalised Cantor sets are equi-homogeneous in Olson, Robinson \& Sharples \cite{ORS}).
At the same time, these sets can have distinct Assouad dimensions. In fact, even the Assouad and the upper box-counting dimension can differ for these sets. We give a simple example here to demonstrate this.

\begin{proposition}\label{proposition - equihom distinct dimension}
Let $F$ be the generalised Cantor set with contraction ratios
\begin{align*}
	c_k=
	\begin{cases}
	1/3 & \text{for}\quad k\in[2^{2(n-1)},2^{2n-1})\\
		1/9 & \text{for}\quad k\in [2^{2n-1},2^{2n}).
	\end{cases}
\end{align*}
Then
\[
\dim_{\rm B}F= \frac{3}{5} \frac{\log 2}{\log 3} < \frac{\log 2}{\log 3} \leq\dim_{\rm A}F.
\]
\end{proposition}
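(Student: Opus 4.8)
The plan is to prove the two inequalities encoded in the displayed chain: the upper bound $\dim_{\rm B}F<\log 2/\log 3$ (via an explicit computation of the box-counting dimension) and the lower bound $\dim_{\rm A}F\ge\log 2/\log 3$. Throughout I would use the natural interval structure of the construction: after $j$ applications of $\gen$ the set $F$ is covered by $2^{j}$ intervals of common length $\ell_{j}=\prod_{k=1}^{j}c_{k}$, and since every $c_{k}\in\{1/3,1/9\}$ satisfies $c_{k}\le 1/3<1/2$, two sibling level-$j$ intervals cannot be covered by a single ball of radius $\ell_{j}$; hence $\cover(F,\ell_{j})\asymp 2^{j}$. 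Writing $b_{j}=\cardinality\{k\le j:c_{k}=1/9\}$ we have $-\log\ell_{j}=(\log 3)(j+b_{j})$, so $\log\cover(F,\ell_{j})/(-\log\ell_{j})=\frac{\log 2}{\log 3}\cdot\frac{1}{1+b_{j}/j}$, and the whole computation reduces to locating the extreme values of the cumulative proportion $b_{j}/j$.

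Because each $1/9$-block $[2^{2n-1},2^{2n})$ has length $2^{2n-1}$, exactly twice that of the preceding $1/3$-block, a direct summation of the geometric series shows that $b_{j}/j$ oscillates between $1/3$, attained at the right ends $j=2^{2n-1}-1$ of the $1/3$-blocks, and $2/3$, attained at the right ends $j=2^{2n}-1$ of the $1/9$-blocks; the associated values of the ratio are $\frac34\frac{\log 2}{\log 3}$ and $\frac35\frac{\log 2}{\log 3}$. Both lie strictly below $\frac{\log 2}{\log 3}$, so the strict inequality $\dim_{\rm B}F<\frac{\log 2}{\log 3}$ needed for the proposition holds in any case, and the stated value $\frac35\frac{\log 2}{\log 3}$ is precisely the extreme arising at the $1/9$-endpoints. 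The delicate point, and where I expect the main effort to lie, is the attainment issue emphasised in the introduction (cf.\ Robinson \& Sharples \cite{RobinsonSharples13RAEX}): to pass from the values along the subsequence $\delta=\ell_{j}$ to the genuine box-counting dimension one must check that no intermediate scale $\delta\in(\ell_{j},\ell_{j-1})$ produces a ratio outside the range just described, which requires tracking how $\cover(F,\delta)$ transitions between $2^{j-1}$ and $2^{j}$ as $\delta$ decreases through $\ell_{j-1}/2$.

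For the lower bound on the Assouad dimension I would exploit the arbitrarily long pure-$1/3$ blocks. Fix the $n$-th $1/3$-block, of length $L_{n}=2^{2n-2}\to\infty$, with first and last indices $J_{0},J_{1}$, and take any $x\in F$ together with $\delta=\ell_{J_{0}-1}$ and $\rho=\ell_{J_{1}}$. Since $x$ lies in a single level-$(J_{0}-1)$ interval of length $\ell_{J_{0}-1}$, the ball $B_{\delta}(x)$ contains that whole interval, and over the next $L_{n}$ steps---all with ratio $1/3$---its intersection with $F$ refines exactly like the middle-thirds Cantor set into $2^{L_{n}}$ intervals of length $\rho$ with gaps of size $\rho$, so that $\cover(B_{\delta}(x)\cap F,\rho)\asymp 2^{L_{n}}$. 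As $\delta/\rho=\prod_{k=J_{0}}^{J_{1}}c_{k}^{-1}=3^{L_{n}}$, this reads $\cover(B_{\delta}(x)\cap F,\rho)\asymp(\delta/\rho)^{\log 2/\log 3}$ along ratios $\delta/\rho=3^{L_{n}}\to\infty$; by Definition \ref{definition - Assouad dimension} (or equivalently Lemma \ref{lemma - Assouad def for equihom}, as $F$ is equi-homogeneous by Theorem \ref{theorem - homogeneous case}) no exponent $s<\log 2/\log 3$ can bound these local covers, giving $\dim_{\rm A}F\ge\log 2/\log 3$. This step is comparatively routine once the scales $\delta,\rho$ are chosen at the two ends of a $1/3$-block. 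I would close by remarking that the matching upper bound also holds---a $1/3$-step contributes the steepest local scaling exponent $\log 2/\log 3$ whereas a $1/9$-step contributes only $\log 2/\log 9$---so in fact $\dim_{\rm A}F=\log 2/\log 3$, though only the displayed inequality is required.
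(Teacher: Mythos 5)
Your proposal follows essentially the same route as the paper on both halves. For the box-counting dimension you work with the ratio $s_j=\frac{j\log 2}{\log(1/\ell_j)}=\frac{\log 2}{\log 3}\cdot\frac{j}{j+b_j}$, which is exactly the paper's $s_k=k\log 2/\log(1/\pi_k)$ (the paper imports the identity $\dim_{\rm B}F=\limsup_k s_k$ from earlier work rather than re-deriving the interpolation across intermediate scales; that interpolation is harmless precisely because $\ell_{j-1}/\ell_j\le 9$ is bounded, so the ``attainment issue'' you worry about is routine). For the Assouad lower bound you place $\delta$ and $\rho$ at the two ends of the $n$-th $1/3$-block and count $2^{L_n}$ separated pieces against $\delta/\rho=3^{L_n}$; this is precisely the paper's choice $\delta_n=\pi_{2^{2(n-1)}}$, $\rho_n=\pi_{2^{2n-1}}$ followed by the same contradiction argument, so that half is fine.

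The one point you must not leave hedged is the value of $\dim_{\rm B}F$. Your (correct) observation that $b_j/j$ oscillates between $1/3$ at the right ends of the $1/3$-blocks and $2/3$ at the right ends of the $1/9$-blocks forces $\limsup_j s_j=\frac{3}{4}\frac{\log 2}{\log 3}$ and $\liminf_j s_j=\frac{3}{5}\frac{\log 2}{\log 3}$: each $1/3$-step replaces $s_{j-1}$ by a mediant with $\log 2/\log 3\ge s_{j-1}$, so $s_j$ \emph{increases} on the $1/3$-blocks and decreases on the $1/9$-blocks (check $s_4,\dots,s_7=\frac{4}{6},\frac{5}{7},\frac{6}{8},\frac{7}{9}$ in units of $\frac{\log 2}{\log 3}$), and its local maxima therefore sit at the ends of the $1/3$-blocks. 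The paper asserts the opposite monotonicity and evaluates along $k=4^n$, obtaining $\frac{3}{5}\frac{\log 2}{\log 3}$; but that subsequence realises the minimum of the oscillation, i.e.\ $\dim_{\rm LB}F$, while the upper box-counting dimension of Definition \ref{definition - box-counting dimensions} comes out as $\frac{3}{4}\frac{\log 2}{\log 3}$. Your remark that ``the strict inequality holds in any case'' is the right instinct --- both candidate values lie strictly below $\log 2/\log 3$, so the substance of the proposition (an equi-homogeneous set with $\dim_{\rm B}F<\dim_{\rm A}F$) survives --- but as written your argument neither proves nor refutes the stated equality $\dim_{\rm B}F=\frac{3}{5}\frac{\log 2}{\log 3}$. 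Commit to the computation: either identify $\frac{3}{5}\frac{\log 2}{\log 3}$ as the lower and $\frac{3}{4}\frac{\log 2}{\log 3}$ as the upper box-counting dimension, or exhibit a flaw in the mediant argument; at present the discrepancy is with the paper's monotonicity claim, not with your analysis.
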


\begin{proof}
Let $\pi_k=c_1\cdots c_k$.
By results from Olson, Robinson \& Sharples \cite{ORS}, see also Hua, Rao, Wen \& Wu~\cite{Hua2000}, the upper box-counting dimension of $F$ is given by
\[
	\dim_{\rm B}F=\limsup_{k\to\infty} s_k \wwords{where}
 s_k=\frac{k\log2}{\log(1/\pi_k)}.
\]

Since $s_k$ is a non-increasing function on $[2^{2(n-1)},2^{2n-1})$ and an increasing function on $[2^{2n-1},2^{2n})$, it follows that we may take the limit supremum along the subsequence $k=4^n$.
The calculation
\begin{align*}
	\log (1/\pi_{4^n})
	&=\sum_{j=1}^n \Big(2^{2(j-1)}\log 3+2^{2j-1}\log 9\Big)\\
	&=\frac{5}{4}\sum_{j=1}^n 4^{j}\log 3
	=\frac{5}{3} (4^{n}-1)\log 3
\end{align*}
therefore implies
\[
	\dim_{\rm B}F=\lim_{n\to\infty} \frac{4^n\log 2}{(5/3)(4^n-1)\log 3}
		=\frac{3}{5} \frac{\log 2}{\log 3}.
\]

On the other hand, $\dim_{\rm A}F\ge {\log 2/\log 3}$.
Suppose, for contradiction, that $\dim_{\rm A}F<s<\log 2/\log 3$.
Then, there would exist a constant $C$ such that
\[
	\cover(B_\delta(x)\cap F,\rho/2)\le C(\delta/\rho)^s
\words{for all} 0<\rho<\delta.
\]
Choose $\delta_n=\pi_{2^{2(n-1)}}$ and $\rho_n=\pi_{2^{2n-1}}$.
Then
\[
\delta_n/\rho_n=3^{2^{2(n-1)}}
\wwords{and}
	\cover(B_{\delta_n}(x)\cap F,\rho_n/2) \ge 2^{2^{2(n-1)}}
\]
would imply that
\[
1 \le C 2^{-2^{2(n-1)}}
	(3^{2^{2(n-1)}})^s
	= C (3^{2^{2(n-1)}})^{s-\log2/\log3} \to 0
\words{as} n\to\infty
\]
which is a contradiction.
\end{proof}

We note that the above set $F$ satisfies the Moran structure conditions of Wen \cite{Wen2001} with $c_*=\inf\{\, c_k : k\in\N\,\}=1/9>0$.
Therefore an additional assumption, such as~\eqref{hippo}, is needed in order to complete the proof of Li \cite{Li2013} and conclude that the Assouad and upper box-counting dimensions are equal.

We now give a simple example of a set that satisfies $\dim_{\rm A}F=\dim_{\rm LA}F$ but that is not
equi-homogeneous. Taken together these two examples demonstrate that the notion of equi-homogeneity is entirely distinct from the coincidence of these two dimensions.

\begin{proposition}\label{proposition - equal dimension not equih}
The set $F=\{0,1\}\cup \{\, 2^{-n} : n\in \mathbb{N}\,\}$ satisfies $\dim_{\rm A}F=\dim_{\rm LA}F=0$ but $F$ is not equi-homogeneous.
\end{proposition}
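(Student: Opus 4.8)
The statement bundles two independent assertions: a dimension computation and the failure of equi-homogeneity. Both are governed by a single observation, namely that because the points of $F$ are spaced exponentially, a local cover $\cover(B_\delta(x)\cap F,\rho)$ can grow only \emph{logarithmically} in the ratio $\delta/\rho$. The plan is to make this logarithmic bound precise and uniform in $x$, deduce the dimension values from it, and then exploit the contrast between the accumulation point $0$ and the isolated point $1$ to rule out equi-homogeneity.

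For the Assouad dimension I would first establish a constant $C_0$ with
\[
\sup_{x\in F}\cover(B_\delta(x)\cap F,\rho)\le \log_2(\delta/\rho)+C_0
\wwords{for all} 0<\rho<\delta.
\]
By \eqref{geometric inequalities} it is enough to bound the packing number $\pack(B_\delta(x)\cap F,\rho)$, i.e. the maximal number of points of $F\cap[x-\delta,x+\delta]$ that are pairwise more than $2\rho$ apart. Such a separated set contains at most two points lying in $[0,4\rho)$, while its remaining members are distinct elements of $\set{2^{-n}}$ exceeding $4\rho$ and lying in an interval of length at most $2\delta$; since these are geometrically spaced by a factor $2$, there are at most $\log_2(\delta/\rho)+C_0$ of them, uniformly in $x$. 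As $\log_2 t+C_0\le C_s t^{s}$ for every $s>0$ and all $t\ge1$, this yields $\sup_{x}\cover(B_\delta(x)\cap F,\rho)\le C_s(\delta/\rho)^s$, so $\dim_{\rm A}F\le s$ for every $s>0$ and therefore $\dim_{\rm A}F=0$. Since $1\in F$ is an isolated point (its nearest neighbour $1/2$ lies at distance $1/2$), the remark following Proposition \ref{proposition - different scaling} gives $\dim_{\rm LA}F=0$ directly; alternatively this follows from $0\le\dim_{\rm LA}F\le\dim_{\rm A}F$ via \eqref{dimension inequalities 1}.

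To show that $F$ is not equi-homogeneous I would use the normed-space criterion of Lemma \ref{lemma - euclideanequiextension}: since $F\subset\R$ and $\R$ is normed and locally totally bounded, equi-homogeneity is equivalent to the existence of $M\ge1$ and $\delta_1\ge1$ with $\sup_{x\in F}\cover(B_\delta(x)\cap F,\rho)\le M\inf_{x\in F}\cover(B_\delta(x)\cap F,\rho)$ for all $0<\rho<\delta\le\delta_1$. Fix $\delta=1/4$, which is admissible as $\delta_1\ge1$. Because $B_{1/4}(1)\cap F=\set{1}$, the infimum on the right equals $1$ for every $\rho\in(0,1/4)$, so the criterion would force $\cover(B_{1/4}(x)\cap F,\rho)\le M$ for all $x$ and all small $\rho$. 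But taking $x=0$, the set $B_{1/4}(0)\cap F$ contains all $2^{-n}\le 1/4$ together with $0$; among these the points exceeding $8\rho$ are pairwise more than $2\rho$ apart and their number diverges as $\rho\to0^{+}$, whence $\cover(B_{1/4}(0)\cap F,\rho)\to\infty$. No finite $M$ can absorb this, which contradicts equi-homogeneity.

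The only genuinely technical step is the uniform logarithmic estimate, and within it the check that the worst placement of the centre $x$ remains controlled; once phrased through the packing number this reduces to counting powers of $2$ in an interval of length $2\delta$, with the clustering of the tail near $0$ absorbed into the additive constant. Given that estimate, the dimension computation and the failure of equi-homogeneity both follow at once, the latter precisely because the point $0$ already supports arbitrarily large local covers while the isolated point $1$ pins the infimum at $1$.
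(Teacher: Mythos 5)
Your proof is correct and follows essentially the same route as the paper's: the failure of equi-homogeneity comes from contrasting the isolated point $1$ (which pins the infimum of the local covers at $1$ for $\delta=1/4$) against the logarithmically growing covers of $B_{1/4}(0)\cap F$, and $\dim_{\rm A}F=0$ comes from a bound $\cover(B_{\delta}(x)\cap F,\rho)\le \log_2(\delta/\rho)+C_0$ uniform in $x$. The only cosmetic difference is that you obtain the logarithmic estimate by counting a $2\rho$-separated set and invoking \eqref{geometric inequalities}, whereas the paper bounds the cover directly after splitting $B_\delta(x)\cap F$ into a piece near $0$ and a geometrically spaced tail ($G$ and $H$); both reduce to the same count of powers of $2$ in an interval of length $2\delta$.
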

We remark that the equality $\dim_{\rm A}F=0$ is stated as Fact 4.3 in~Olson \cite{EJO} without proof. We also remark that the logarithmic terms that occur in the course of the proof can also be used to show that $F$ does not
`attain' its box-counting dimension (i.e. that \eqref{box-counting growth bounds} does not hold with $\varepsilon=0$.).

\begin{proof}
Let $\delta=1/4$, then
\[
B_\delta(1)\cap F= \{1\} \quad\hbox{implies that}\quad \inf_{x\in F}
\cover(B_\delta(x)\cap F,\rho) = 1
\]
for every $\rho>0$.  On the other hand, for $0<\rho<1/4$, let $K$ be chosen so that
\[
2^{-K-1}\le \rho< 2^{-K}.
\]
Then $K\ge2$ and
\[
B_\delta(0)\cap F \supseteq \{\, 2^{-n} : n=2,\ldots, K\,\}.
\]
Moreover $2^{-n+1}-2^{-n}=2^{-n}\ge 2^{-K}>\rho$ for $n\le K$ implies that at least one set of diameter $\rho$ is required to cover each of the $K-1$ points above. Therefore
\[
\sup_{x\in F} \cover(B_\delta(x)\cap F,\rho) \ge K-1 \ge \frac{\log
  (1/\rho)}{\log 2} -2.
\]
This shows there is no value for $M$ independent of $\rho$ that could appear in Definition \ref{equihom} for this set, and so $F$ is not equi-homogeneous.

Clearly $\dim_{\rm LA}F=0$. To show that $\dim_{\rm A}F=0$ let $x\in [0,1]$ and $0<\rho<\delta<1/4$. Define
\[
	G=\{\, 2^{-n} : \max(0,x-\delta) < 2^{-n}\le \rho\,\}
\]
and
\[
	H=\{\, 2^{-n} : \max(\rho,x-\delta) < 2^{-n}< \min(x+\delta,1)\,\}.
\]
Then $B_\delta(x)\cap F\subseteq \{0,1\}\cup G\cup H.$ Now depending on $\rho$, $x$, and $\delta$ it may happen that either or both of the sets $H$ and $G$ are empty. As covering an empty set is trivial, we need only consider the cases when these sets are non-empty.

If $G\ne\emptyset$ then $x-\delta<\rho$, and it follows that
\begin{equation}\label{gest}
  \cover(G,\rho)\le \frac{\rho-\max(0,x-\delta)}{\rho} +1\le 2.
\end{equation}
Similarly if $H\ne\emptyset$ then
\[
\cover(H,\rho)\le \frac{1}{\log 2}
\log\bigg\{\frac{\min(x+\delta,1)}{\max(\rho,x-\delta)}\bigg\} +1.
\]
If $x+\delta\ge 1$ then $x-\delta\ge 1-2\delta\ge 1/2.$ Thus $\cover(H,\rho)\le 2$.  If $x-\delta\le \rho$ then $x+\delta\le \rho+2\delta<3\delta<1$.  Thus $\cover(H,\rho)\le (\log 2)^{-1}\log(3\delta/\rho)+1$. Otherwise, $\rho+\delta<x<1-\delta$.
On this interval $ x\mapsto \log\big\{(x+\delta)/(x-\delta)\big\} $ is a decreasing function. Therefore, in general,
\begin{equation}\label{hest}
  \cover(H,\rho)\le 2\log(\delta/\rho)+3.
\end{equation}

Combining \eqref{gest} with \eqref{hest} we obtain
\[
\cover(B_\delta(x)\cap F,\rho)\le 2\log(\delta/\rho)+7
\]
Since for every $s>0$ there exists $C>0$ such that
\[
2\log(\delta/\rho)+7\le C(\delta/\rho)^s \qquad\hbox{for every}\qquad
0<\rho<\delta<1/4.
\]
It follows from the remarks after Definition \ref{definition - lower Assouad dimension} that $\dim_{\rm A}F=0$.
\end{proof}

We finish this section by showing that Ahlfors-David regularity implies equi-homogeneity. Along the way we will show that Ahlfors-David regularity implies equality of the Assouad and lower Assouad dimensions.

\begin{theorem}\label{theorem - ahlfors implies equih}
Let $F\subset X$ be a bounded set such that either $F$ is totally bounded or $X$ is a locally totally bounded space. If $F \subset X$ is Ahlfors-David $s$-regular then $F$ is equi-homogeneous and $\dim_{\rm LA}F=\dim_{\rm A}F=s$.
\end{theorem}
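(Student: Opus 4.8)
The plan is to convert the measure bounds \eqref{Ahlfors-David inequality} into uniform two-sided covering estimates of the form $C_1(\delta/\rho)^s\le\cover(B_\delta(x)\cap F,\rho)\le C_2(\delta/\rho)^s$, with $C_1,C_2$ independent of the centre $x\in F$; both the equi-homogeneity and the dimension equalities then fall out quickly. For the lower bound I would fix $x\in F$ and $0<\rho<\delta$, let $y_1,\dots,y_N\in F$ be the centres of a minimal cover of $B_\delta(x)\cap F$ by $\rho$-balls (so $N=\cover(B_\delta(x)\cap F,\rho)$), and combine countable subadditivity of $\mathcal{H}^s$ with the upper regularity estimate applied to each $B_\rho(y_i)\cap F$:
\[
C^{-1}\delta^s\le\mathcal{H}^s(B_\delta(x)\cap F)\le\sum_{i=1}^N\mathcal{H}^s(B_\rho(y_i)\cap F)\le NC\rho^s,
\]
which yields $N\ge C^{-2}(\delta/\rho)^s$.

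For the matching upper bound I would pass to the packing quantity. By \eqref{geometric inequalities} we have $\cover(B_\delta(x)\cap F,\rho)\le\pack(B_\delta(x)\cap F,\rho)$, so let $B_\rho(y_1),\dots,B_\rho(y_P)$ be disjoint balls with centres $y_i\in B_\delta(x)\cap F$ realising the packing number. These balls are contained in $B_{2\delta}(x)$, and disjointness together with the lower regularity estimate gives
\[
P\,C^{-1}\rho^s\le\sum_{i=1}^P\mathcal{H}^s(B_\rho(y_i)\cap F)\le\mathcal{H}^s(B_{2\delta}(x)\cap F)\le C(2\delta)^s,
\]
so that $P\le C^2 2^s(\delta/\rho)^s$. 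These two estimates are valid for every $x\in F$ and every $\rho,\delta$ with $0<\rho<\delta$ small enough that $2\delta<\diameter F$, i.e.\ on a range $0<\rho<\delta\le\delta_1$.

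To finish, I would take the supremum and infimum over $x$ on this range to obtain
\[
\sup_{x\in F}\cover(B_\delta(x)\cap F,\rho)\le C^2 2^s(\delta/\rho)^s\le C^4 2^s\inf_{x\in F}\cover(B_\delta(x)\cap F,\rho),
\]
which is precisely \eqref{equihom equivalent} with $M=C^4 2^s$ and $c_1=c_2=1$. Since $F$ is totally bounded or $X$ is locally totally bounded, Lemma \ref{lemma - equiextension} upgrades this to equi-homogeneity at all scales, giving the first conclusion. For the dimensions, the uniform upper bound gives $\dim_{\rm A}F\le s$ directly from Definition \ref{definition - Assouad dimension}, while $\inf_{x\in F}\cover(B_\delta(x)\cap F,\rho)\ge C^{-2}(\delta/\rho)^s$ gives $\dim_{\rm LA}F\ge s$ from Definition \ref{definition - lower Assouad dimension} (using the equivalent formulations available in both cases, noted after Definition \ref{definition - lower Assouad dimension}, which permit restricting to a single scale range). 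Combining with $\dim_{\rm LA}F\le\dim_{\rm A}F$ from \eqref{dimension inequalities 1} pins down $\dim_{\rm LA}F=\dim_{\rm A}F=s$.

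The only genuinely delicate point is the scale restriction: the regularity hypothesis is assumed only for $\delta<\diameter F$, and the packing argument inflates the radius to $2\delta$, so the clean two-sided bound is available only for $\delta$ bounded away from $\diameter F$. This is exactly why the hypothesis that $F$ is totally bounded or $X$ is locally totally bounded enters — it is what lets Lemma \ref{lemma - equiextension} extend the local equi-homogeneity inequality to every scale. Everything else is routine once the covering--packing--measure dictionary is in place.
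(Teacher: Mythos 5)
Your proposal is correct and follows essentially the same route as the paper: subadditivity of $\mathcal{H}^s$ against the upper regularity bound for the lower covering estimate, a disjoint packing inside $B_{2\delta}(x)$ against the lower regularity bound (via $\cover\le\pack$ from \eqref{geometric inequalities}) for the upper estimate, and then Lemma \ref{lemma - equiextension} to pass from the restricted scale range $0<\rho<\delta\le\delta_1$ to full equi-homogeneity, with the dimension equalities read off from the two-sided bound exactly as in the paper. Your closing remark about why the total-boundedness hypothesis is needed to invoke Lemma \ref{lemma - equiextension} matches the paper's use of that hypothesis as well.
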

\begin{proof}
As $F$ is Ahlfors-David $s$-regular there exists a constant $C>0$ such that
\begin{align}\label{F Ahlfors-David}
C^{-1} \delta^{s} \leq \mathcal{H}^{s}\left(B_{\delta}\left(x\right)\cap F\right) &\leq C \delta^{s} & 0<\delta<\diameter F.
\end{align}
Let $\delta_{0}=\diameter\left(F\right) / 2 $, fix $x,y\in F$ and $\delta,\rho$ with $0<\rho<\delta<\delta_{0}$. Observe that $N:=\cover\left(B_{\delta}\left(x\right)\cap F,\rho\right)<\infty$ as either $F$ is totally bounded or $X$ is locally totally bounded so certainly $B_{\delta}\left(x\right)\cap F$ is totally bounded. Let $x_{i} \in B_{\delta}\left(x\right)\cap F$ for $i=1,\ldots, N$ be the centres of $\rho$-balls that cover $B_{\delta}\left(x\right)\cap F$. Clearly $\bigcup_{i=1}^{N} B_{\rho}\left(x_{i}\right) \supset B_{\delta}\left(x\right) \cap F$ so
\begin{align}
\mathcal{H}^{s}\left(\bigcup_{i=1}^{N} B_{\rho}\left(x_{i}\right)\cap F \right) &\geq \mathcal{H}^{s}\left(B_{\delta}\left(x\right)\cap F\right) \geq C^{-1}\delta^{s}\label{Ahlfors first inequality}
\intertext{from \eqref{F Ahlfors-David}, while}
\mathcal{H}^{s}\left(\bigcup_{i=1}^{N} B_{\rho}\left(x_{i}\right)\cap F \right) &\leq \sum_{i=1}^{N} \mathcal{H}^{s}\left(B_{\rho}\left(x_{i}\right)\cap F\right) \leq \sum_{i=1}^{N} C\rho^{s} = NC\rho^{s}. \label{Ahlfors second inequality}
\end{align}
Combining \eqref{Ahlfors first inequality} and \eqref{Ahlfors second inequality} we obtain
\begin{equation}\label{Ahlfors theorem lower bound}
\cover\left(B_{\delta}\left(x\right)\cap F,\rho\right) = N \geq C^{-2}\left(\delta/\rho\right)^{s}
\end{equation}
for all $\rho,\delta$ with $0<\rho<\delta\leq\delta_{0}$ and all $x\in F$. It follows that $\dim_{\rm LA} F \geq s$ (see the remarks after Definition \ref{definition - lower Assouad dimension}).

Next, observe that $P:= \pack \left(B_{\delta}\left(y\right)\cap F,\rho\right)<\infty$ and let $y_{i}\in B_{\rho}\left(y\right)\cap F$ for $i=1,\ldots, P$ be the centres of disjoint $\rho$-balls. As $B_{\rho}\left(y_{i}\right)\subset B_{2\delta}\left(y\right)$ for each $i$ it follows that $\bigcup_{i=1}^{P} B_{\rho}\left(y_{i}\right)\cap F \subset B_{2\delta}\left(y\right) \cap F$ so
\begin{align}
\mathcal{H}^{s}\left(\bigcup_{i=1}^{P} B_{\rho}\left(y_{i}\right)\cap F\right) \leq \mathcal{H}^{s}\left(B_{2\delta}\left(y\right)\cap F\right) \leq C2^{s}\delta^{s} \label{Ahlfors third inequality}
\intertext{from \eqref{F Ahlfors-David}. Further, as the $B_{\rho}\left(y_{i}\right)$ are disjoint it follows that}
\mathcal{H}^{s}\left(\bigcup_{i=1}^{P} B_{\rho}\left(y_{i}\right)\cap F\right) = \sum_{i=1}^{P} \mathcal{H}^{s}\left(B_{\rho}\left(y_{i}\right)\cap F\right) \geq \sum_{i=1}^{P} C^{-1}\rho^{s} = PC^{-1}\rho^{s}\label{Ahlfors fourth inequality}
\end{align}
Combining \eqref{Ahlfors third inequality} and \eqref{Ahlfors fourth inequality} with the geometric inequalities \eqref{geometric inequalities} we obtain
\begin{equation}\label{Ahlfors theorem upper bound}
\cover \left(B_{\delta}\left(y\right)\cap F,\rho\right) \leq \pack \left(B_{\delta}\left(y\right)\cap F,\rho\right) = P \leq C^{2}2^{s} \left(\delta/\rho\right)^{s}
\end{equation}
for all $\rho,\delta$ with $0<\rho<\delta\leq \delta_{0}$ and all $y\in F$. It follows that $\dim_{\rm A}F \leq s$ (see the remarks after Definition \ref{definition - lower Assouad dimension}). As we now have $s \leq \dim_{\rm LA}F\leq \dim_{\rm A}F \leq s$ we must have equality throughout. Finally, it follows from \eqref{Ahlfors theorem lower bound} and \eqref{Ahlfors theorem upper bound} that
\begin{align*}
\cover \left(B_{\delta}\left(y\right)\cap F,\rho\right) \leq C^{2}2^{s}\left(\delta/\rho\right)^{s} \leq C^{4}2^{s} \cover \left(B_{\delta}\left(x\right)\cap F,\rho\right)
\end{align*}
for all $\rho,\delta$ satisfying $0<\rho<\delta\leq \delta_{1}$ and arbitrary $x,y\in F$. Taking limits we conclude
\begin{align*}
\sup_{x\in F}\cover \left(B_{\delta}\left(x\right)\cap F,\rho\right) \leq C^{4}2^{s} \inf_{x\in F} \cover \left(B_{\delta}\left(x\right)\cap F,\rho\right)
\end{align*}
for all $\rho,\delta$ satisfying $0<\rho<\delta\leq \delta_{1}$, so $F$ is equi-homogeneous by Lemma \ref{lemma - equiextension}.
\end{proof}

\section{Equi-homogeneity and dynamical systems}

We now demonstrate that the notion of equi-homogeneity is not overly restrictive: it is enjoyed by all self-similar sets that satisfy the Moran open-set condition, generalised Cantor sets, and the pullback attractors for a certain class of non-autonomous iterated function systems that satisfy the (generalised) Moran open-set condition.

\subsection{Autonomous systems}\label{IFS1}

We will begin with self-similar sets, which are a much studied and canonical class of fractal sets. Classical (`autonomous') iterated function systems are precisely the non-autonomous iterated function systems that satisfy $\I_k=\I$ for each $k\in\N$. If we define the set function $\T$ and its iterates as
\[
	\T(B)=\bigcup_{i\in\I} f_i(B)
\wwords{and}
	\T^{n+1}(B)=\T\circ \T^{n}(B)
\]
where $\T^{0}(B)=B$ then it is well known (see Falconer \cite{BkFalconer14} for example,) that if the $f_{i}$ are contractions then
there exists a unique non-empty compact set $F$ such that
\begin{equation}
  F={\mathcal T}(F) \label{attract}
\end{equation}
called the attractor of the iterated function system. Further, the attractor satisfies $\rho_H(\T^l(B),F)\to 0$ as $l\to\infty$ for any bounded set $B\subset\R^d$. In this case, defining $F^k=F$ for all $k\in\N_0$ and noting that $\T^l=\S^{0,l}$ yields a collection of compact sets satisfying Definition \ref{definition - pullback attractor} so the collection $\set{F^{k}}$ is a pullback attractor.
Conversely, as pullback attractors are unique (which we prove in Theorem \ref{unique}) it follows that for classical iterated function systems the pullback attractor can be identified with the invariant set.

A set $F$ is said to be self-similar if it is the attractor of a classical system of contracting similarities. Reasonable self-similar sets result when we impose some separation properties on the iterated function system, see Falconer \cite{BkFalconer14} pp. 139 or Hutchinson \cite{Hutch}, for example. The simplest such property is the Moran open-set condition (see Moran \cite{Moran}): there exists an open set $U$ such that $F\subset\overline U$, $f_i(U)\subseteq U$, and
\[
f_i(U)\cap f_j(U)=\emptyset\qquad\mbox{when}\qquad i\neq j.
\]

If a classical (`autonomous') iterated function system satisfies the Moran open-set condition, then it satisfies the generalised Moran open-set condition of Definition \ref{definition - generalised MOSC} with $U^k=U$, $\I_k=\I$ and $F^k=F$ for every $k\in\N_0$. 
Thus, this definition of the generalised Moran open-set condition reduces to the classical one when the functions are the same at every step of the iteration, and hence there is no ambiguity in referring to the generalised Definition \ref{definition - generalised MOSC} simply as the `Moran open-set condition'.

These self-similar sets are equi-homogeneous, which is the content of Theorem \ref{theorem - self-similar} from the Introduction.

\mainzero*
\begin{proof}
Self-similar sets that satisfy the open-set condition are Ahlfors-David regular (Theorem 1(i), Section 5.3 of Hutchinson \cite{Hutch}) so by Theorem \ref{theorem - ahlfors implies equih} they are equi-homogeneous.
\end{proof}
In fact, the open set condition in Theorem \ref{theorem - self-similar} may be relaxed to allow for the images of the attractor $F\subset \R^{d}$ to overlap in a limited number of ways. This `weak separation' condition, made precise in Fraser et al. \cite{Fraser2014}, is sufficient for $F$ to be Ahlfors-David regular provided that the attractor is not contained in a $d-1$ dimensional hyperplane (Theorem 2.1 of Fraser et al. \cite{Fraser2014}).

\subsection{Non-autonomous systems}\label{non-auto-sec}

We now consider pullback attractors of non-autonomous iterated function systems. Using the notation given in the introduction we consider a collection of index sets $\I_k\subset\N$ with ${\rm card}\left(\I_{k}\right)<\infty$ for all $k\in\N$. We also introduce some additional notation: define
\[
\I^0=\bigcup_{n=1}^\infty \I_1\times\cdots\times\I_n,
\]
for $\alpha=(i_1,\ldots,i_n)\in\I^0$ let
\[
f_\alpha=f_{i_1}\circ \cdots\circ f_{i_n}
\wwords{and} \sigma_\alpha=\sigma_{i_1}\cdots
\sigma_{i_n},
\]
and, if $n\ge 2$, we denote the truncation $(i_1,\ldots,i_{n-1})$ by $\alpha'$.

Before we give conditions ensuring the existence of a pullback attractor, we first show that the definition is sufficient to ensure uniqueness.

\begin{theorem}\label{unique}
The pullback attractor of a non-autonomous iterated
function system, if it exists, is unique.
\end{theorem}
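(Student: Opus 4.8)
The plan is to prove uniqueness directly from Definition \ref{definition - pullback attractor}, exploiting the fact that a pullback attractor attracts bounded sets together with the invariance property. Suppose $\set{F^k}$ and $\set{G^k}$ are two pullback attractors of the same non-autonomous iterated function system. My first step would be to fix an arbitrary $k\in\N_0$ and aim to show $F^k=G^k$. Since both collections are uniformly bounded, each $G^l$ is contained in some fixed bounded set $B$ (independent of $l$), and similarly for $F^l$. This uniform boundedness is precisely the hypothesis that makes the argument work, so I would flag that as the pivotal ingredient.

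The key idea is to use invariance to rewrite $G^k$ in terms of the process operator applied to a distant fibre, and then invoke the attraction property of $F$ against that distant fibre. Concretely, iterating property \ref{pullback attractor property 2} gives $G^k=\S^{k,l}(G^l)$ for every $l\ge k$. Since the collection $\set{G^l}$ is uniformly bounded, there is a single bounded set $B$ with $G^l\subseteq B$ for all $l$; combined with monotonicity/containment properties of $\S^{k,l}$ this lets me compare $\S^{k,l}(G^l)$ with $\S^{k,l}(B)$. Then property \ref{pullback attractor property 3} for the attractor $\set{F^k}$ yields $\rho_H(\S^{k,l}(B),F^k)\to 0$ as $l\to\infty$. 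Putting these together shows
\[
\rho_H(G^k,F^k)=\rho_H\big(\S^{k,l}(G^l),F^k\big)\le \rho_H\big(\S^{k,l}(B),F^k\big)\to 0
\]
as $l\to\infty$, forcing $\rho_H(G^k,F^k)=0$, and hence (since $F^k$ is compact, so closed) $G^k\subseteq F^k$. By symmetry $F^k\subseteq G^k$, giving equality.

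The main obstacle I anticipate is the bookkeeping around the Hausdorff \emph{semi}-distance $\rho_H$, which is not symmetric and does not by itself give set equality from $\rho_H(X,Y)=0$. The quantity $\rho_H(G^k,F^k)=0$ only tells me that every point of $G^k$ lies in the closure of $F^k$; I therefore need compactness (hence closedness) of $F^k$ from property \ref{pullback attractor property 1} to conclude $G^k\subseteq F^k$, and I must run the identical argument with the roles of $F$ and $G$ reversed to obtain the reverse inclusion. A secondary technical point is justifying the containment $\S^{k,l}(G^l)\subseteq \S^{k,l}(B)$: since $\S^{k,l}$ is built from unions of the maps $f_i$ and these set operations are monotone under inclusion, $G^l\subseteq B$ indeed propagates through the composition, but I would state this monotonicity explicitly to avoid any gap. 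No growth or contraction estimates are needed here—the argument is purely topological and relies only on the definition—so the proof should be short once the semi-distance asymmetry is handled with care.
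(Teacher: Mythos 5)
Your proof is correct and follows essentially the same route as the paper: one inclusion via invariance of $\set{G^k}$, uniform boundedness, the attraction property of $\set{F^k}$ applied to a fixed bounded set, and closedness of $F^k$, then symmetry. The only cosmetic difference is that the paper takes as its bounded set $B$ a backward orbit $\{x_j\}$ of a single point $x_k\in G^k$ (so that $x_k\in\S^{k,l}(B)$ for all $l$), whereas you use the whole fibres $G^l$ together with monotonicity of $\S^{k,l}$; both are valid.
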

\begin{proof}
Let $\set{F^{k}}$ and $\set{G^{k}}$ be pullback attractors of a non-autonomous iterated function system.
Given $k\in\N_0$ fixed, consider any point $x_k\in G^k$.
By property \ref{pullback attractor property 2} of pullback attractors there is $x_{k+1}\in G^{k+1}$ and $i_{k+1}\in\I_{k+1}$ such that $x_k=f_{i_{k+1}}(x_{k+1})$.
By induction define $x_j\in F^j$ and $i_j\in\I_j$ such that $x_j=f_{i_{j+1}}(x_{j+1})$ for all $j>k$.
Since the $x_j\in  G^j$ and the $\{G^j\}$ are uniformly bounded then $B=\{\, x_j : j> k\,\}$ is a bounded set.
Moreover, $x_k\in\S^{k,l}(B)$ for every $l>k$.
Consequently
\[
\rho_H(\{x_k\}, F^k)\le \rho_H(\S^{k,l}(B), F^k)\to 0 \words{as} l\to\infty
\]
by property \ref{pullback attractor property 3} of pullback attractors.
Since $F^k$ is closed, it follows that $x_k\in F^k$.  Therefore $G^k\subseteq F^k$.  Switching the roles of $ F^k$ and $ G^k$ yields that $ F^k= G^k$.
\end{proof}

We now show under natural conditions that the pullback attractor of a non-autonomous iterated function system exists.
For each $i\in\N$ let $b_i$ be the unique fixed point of $f_i$ such that $f_i(b_i)=b_i$.
\begin{theorem}\label{exist}
If
\[
	M=\sup\{\,|b_i|:i\in\N\,\}<\infty \wwords{and} \sigma^*=\sup\{\, \sigma_i:i\in\N\}<1,
\]
then the pullback attractor exists.
\end{theorem}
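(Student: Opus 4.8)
The plan is to adapt Hutchinson's construction of an autonomous attractor, performing it separately for each index $k$. First I would exhibit a bounded region that is forward-invariant under every generator. Writing $f_i(x)-b_i=f_i(x)-f_i(b_i)$ and using $|f_i(x)-b_i|\le\sigma_i|x-b_i|\le\sigma^*|x-b_i|$ gives $|f_i(x)|\le(1+\sigma^*)M+\sigma^*|x|$, so the closed ball $\bar B=B_R(0)$ of radius $R\ge(1+\sigma^*)M/(1-\sigma^*)$ about the origin satisfies $f_i(\bar B)\subseteq\bar B$ for every $i$, and hence $\S^k(\bar B)\subseteq\bar B$ for every $k$. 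I then define the candidate attractor by $F^k=\bigcap_{l>k}\S^{k,l}(\bar B)$. The identity $\S^{k,l+1}=\S^{k,l}\circ\S^l$ together with $\S^l(\bar B)\subseteq\bar B$ and monotonicity of $\S^{k,l}$ under inclusion shows that, for fixed $k$, the compact sets $\S^{k,l}(\bar B)$ decrease with $l$ (each is compact as a finite union of continuous images of the compact $\bar B$, using that every $\I_k$ is finite). By Cantor's intersection theorem $F^k$ is non-empty and compact, and $F^k\subseteq\bar B$ gives uniform boundedness, so property \ref{pullback attractor property 1} holds.

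The heart of the argument is invariance (property \ref{pullback attractor property 2}), $\S^k(F^{k+1})=F^k$, which amounts to commuting $\S^k$ with an infinite intersection. The inclusion $\S^k(\bigcap_m A_m)\subseteq\bigcap_m\S^k(A_m)$ is trivial; the reverse is the main obstacle. Given $y\in\bigcap_m\S^k(A_m)$, write $y=f_{i_m}(x_m)$ with $x_m\in A_m$; since $\I_{k+1}$ is finite a single index $i^*$ recurs along a subsequence, and since the $x_m$ all lie in the compact $\bar B$ a further subsequence converges to some $x$, which lies in $\bigcap_m A_m$ because the $A_m$ are closed and decreasing, whence $f_{i^*}(x)=y$ by continuity. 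Applying this with $A_m=\S^{k+1,m}(\bar B)$ and using $\S^k\circ\S^{k+1,m}=\S^{k,m}$ gives $\S^k(F^{k+1})=\bigcap_{m\ge k+2}\S^{k,m}(\bar B)=F^k$, the last equality because the decreasing family makes the omitted $l=k+1$ term redundant. This step genuinely needs both the finiteness of the index sets and compactness, and is where care is required.

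For attraction I would first record the contraction estimate
\begin{equation}\label{lcontract}
\rho_H(\S^{k,l}(A),\S^{k,l}(A'))\le(\sigma^*)^{l-k}\rho_H(A,A'),
\end{equation}
which follows since each $f_i$ is a $\sigma_i$-contraction, $\rho_H(\bigcup_i X_i,\bigcup_i Y_i)\le\max_i\rho_H(X_i,Y_i)$, and $\sigma_i\le\sigma^*$. As the $\S^{k,l}(\bar B)$ form a decreasing sequence of compact sets with intersection $F^k$, one has $\rho_H(\S^{k,l}(\bar B),F^k)\to0$ as $l\to\infty$. The triangle inequality for $\rho_H$ together with \eqref{lcontract} then yields, for any bounded $B$,
\[
\rho_H(\S^{k,l}(B),F^k)\le(\sigma^*)^{l-k}\rho_H(B,\bar B)+\rho_H(\S^{k,l}(\bar B),F^k)\to0,
\]
since $\rho_H(B,\bar B)<\infty$; this is property \ref{pullback attractor property 3}. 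Uniqueness is already supplied by Theorem \ref{unique}, so $\set{F^k}$ is the pullback attractor.
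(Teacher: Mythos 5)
Your proof is correct, but it reaches the attractor by a genuinely different route than the paper. The paper works in the complete metric space of non-empty compact subsets of $\R^d$ with the Hausdorff metric: it shows each $\S^k$ is a $\sigma^*$-contraction there, proves that $K_m=\S^{k,k+m}(K)$ is a Cauchy sequence, defines $F^k$ as its limit, and then obtains invariance essentially for free from the continuity of $\S^k$ with respect to $\rho_H$ (estimating $\rho_H(F^k,\S^k(F^{k+1}))$ by passing through $K_m$). You instead take $F^k$ to be the monotone intersection $\bigcap_{l>k}\S^{k,l}(\bar B)$ of a forward-invariant ball, get non-emptiness and compactness from Cantor's intersection theorem, and prove invariance by the hands-on subsequence argument showing that $\S^k$ commutes with decreasing intersections of compacta --- this is the step that, in your version, carries the real work and genuinely needs finiteness of $\I_{k+1}$ plus compactness, whereas the paper never has to commute anything with an intersection. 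Your approach is more elementary in that it avoids invoking completeness of the Hausdorff metric, at the price of that fiddly commuting lemma; the paper's approach needs no monotonicity of the sets $K_m$. Your treatment of attraction is also slightly cleaner: since $\rho_H$ is only a semi-distance, $\rho_H(B,\bar B)<\infty$ for \emph{any} bounded $B$, so the contraction estimate applies directly and you can dispense with the paper's preliminary absorption step (choosing $l(k,B)$ so that $\S^{k,l}(B)\subseteq K$). Both arguments are standard adaptations of Hutchinson's construction and both are sound.
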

\begin{proof}
We first find a compact set $K$ such that for any bounded set $B\subset\R^d$ and any $k\in N$ there exists a corresponding value of $l(k,B)\ge k$ such that
\begin{equation}\label{cab}
	\S^{k,l}(B)\subseteq K \wwords{for all} l\ge l(k,B).
\end{equation}
Let $R= 2(1+\sigma^*) M/(1-\sigma^*)$ and $K$ be the closed ball of radius $R$ centred at the origin.
Consider a bounded set $B\subset\R^d$ with $\abs{x}\le L$ for all $x\in B$. If $\abs{x} \ge R$, then
\begin{align*}
	\abs{f_i(x)}&\le \abs{f_i(x)-f_i(b_i)}+\abs{b_i}
		\le \sigma^* \abs{x-b_i}+\abs{b_i}\\
		&\le \sigma^* \abs{x} + (1+\sigma^*) \abs{b_i}
		\le ((1+\sigma^*)/2) \abs{x}.
\end{align*}%
It follows that \eqref{cab} holds for $l(k,B)=k+\lceil\log(L/R)/\log(2/(1+\sigma^*))\rceil$.

Next we show that $\S^{k}$ is a contraction with respect to the Hausdorff metric.
Let $A,B\subset\R^d$ be compact. For each $a\in A$ choose $\pi(a)\in B$ such that
\[
	\abs{a-\pi(a)}=\min\{\, \abs{a-b} : b\in B\,\}.
\]
Any $x\in\S^{k}(A)$ may be written as $x=f_{i}(a)$
for some $a\in A$ and $i\in\I_k$.  Consequently
\[
	\rho_H(\{x\},\S^k(B))\le \abs{f_{i}(a)-f_{i}(\pi(a))}
		\le \sigma_i \abs{a-\pi(a)} \le \sigma^* \rho_H(A,B)
\]
implies
\begin{equation}\label{lcontract}
	\rho_H(\S^k(A),\S^k(B))\le \sigma^* \rho_H(A,B) \le \sigma^* \rho_H(A,B).
\end{equation}
Interchanging the roles of $A$ and $B$ yields
\[
	\rho_H(\S^k(A),\S^k(B))\le \sigma^* \rho_H(A,B).
\]

Given $k$ fixed, define $K_m=\S^{k,k+m}(K)$ for $m\in\N$ to obtain a sequence of compact subsets of $\R^d$.
If $m\le n$ then
\begin{align*}
	\rho_H(K_m,K_n)&=\rho_H(\S^{k,k+m}(K),\S^{k,k+m}\circ \S^{k+m,k+n}(K))\\
		&\le (\sigma^*)^m \rho_H(K,\S^{k+m,k+n}(K))
		\le (\sigma^*)^m 2R,
\end{align*}
which shows that $K_m$ is a Cauchy sequence. The completeness of the Hausdorff metric on the space of all compact subsets of $\R^d$ then yields a compact limit set, which we call $F^k$.

It remains to show that $ F^k$ satisfies the properties required of the pullback attractor.
Clearly $ F^k$ is compact.
Moreover, since $K_m\subseteq K$ for all $m$ then $F^k\subseteq K$ and so $ F^k$ is uniformly bounded.
To show the invariance property \ref{pullback attractor property 2} note that
\begin{align*}
	\rho_H( F^k&,\S^k( F^{k+1}))\\
	&\le \rho_H(F^k,\S^{k,k+m}(K)) + \rho_H(\S^k\circ\S^{k+1,k+m}(K),\S^k( F^{k+1}))\\
	&\le \rho_H(F^k,\S^{k,k+m}(K)) + \sigma^* \rho_H(\S^{k+1,k+m}(K), F^{k+1})\\
	&\to 0 \words{as} m\to\infty.
\end{align*}
Given $\varepsilon>0$ choose $m$ so large that $(\sigma^*)^m 2R<\varepsilon$.
Now choose $l\ge k+m$ so large that $\S^{k+m,l}(B)\subseteq K$. It follows that
\[
	\rho_H(\S^{k,l}(B), F^k) \le \rho_H(\S^{k,k+m}(K), F^k)=\rho_H(K_m, F^k) \le (\sigma^*)^m 2R <\varepsilon,
\]
and therefore property \ref{pullback attractor property 3} holds.
\end{proof}

We are now ready to formulate conditions on non-autonomous iterated function systems that guarantee that the resulting non-autonomous attractor is equi-homogeneous. We first prove that if the Moran open-set condition is satisfied then the pullback attractor is contained in the open sets, which is essentially the non-autonomous version of Falconer \cite{BkFalconer85} pp.122.

\begin{lemma}\label{finubar}
If a non-autonomous iterated function system satisfies the Moran open-set condition and the hypotheses of Theorem \ref{exist}, then $F^k\subseteq\overline{U^k}$.
\end{lemma}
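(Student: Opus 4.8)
The plan is to follow the classical argument (Falconer pp.~122) adapted to the non-autonomous setting, using the closed sets $\overline{U^k}$ as ``trapping regions''. First I would show that the Moran open-set condition forces
\[
\S^k(\overline{U^{k+1}})\subseteq\overline{U^k}
\wwords{and hence}
\S^{k,l}(\overline{U^l})\subseteq\overline{U^k}
\wwords{for all} l\ge k.
\]
The first inclusion follows from property~(i) of Definition~\ref{definition - generalised MOSC}: since $f_i(U^{k+1})\subseteq\S^k(U^{k+1})\subseteq U^k$ for each $i\in\I_{k+1}$, continuity of $f_i$ gives $f_i(\overline{U^{k+1}})\subseteq\overline{f_i(U^{k+1})}\subseteq\overline{U^k}$, and taking the (finite) union over $i\in\I_{k+1}$ yields $\S^k(\overline{U^{k+1}})\subseteq\overline{U^k}$. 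The iterated inclusion then follows by downward induction on the index, using that each $\S^j$ is monotone with respect to set inclusion.

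Next I would relate $F^k$ to these trapped sets. Iterating the invariance property~\ref{pullback attractor property 2} gives $F^k=\S^{k,l}(F^l)$ for all $l\ge k$, so the contraction estimate~\eqref{lcontract} (applied $l-k$ times) yields
\[
\rho_H\big(F^k,\S^{k,l}(\overline{U^l})\big)
=\rho_H\big(\S^{k,l}(F^l),\S^{k,l}(\overline{U^l})\big)
\le(\sigma^*)^{l-k}\rho_H(F^l,\overline{U^l}).
\]
Because the $F^l$ and the $U^l$ are all uniformly bounded (the former by Theorem~\ref{exist}, the latter by hypothesis), both families lie in a common ball of some radius $R$, so $\rho_H(F^l,\overline{U^l})\le 2R$ uniformly in $l$; here nonemptiness of each $U^l$ ensures the semidistance is taken over a nonempty set. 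Consequently $\rho_H\big(F^k,\S^{k,l}(\overline{U^l})\big)\le(\sigma^*)^{l-k}2R\to0$ as $l\to\infty$, since $\sigma^*<1$.

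Finally, fix $x\in F^k$. For each $l\ge k$ the previous estimate gives $\dist(x,\S^{k,l}(\overline{U^l}))\le(\sigma^*)^{l-k}2R$, and since $\S^{k,l}(\overline{U^l})\subseteq\overline{U^k}$ we obtain $\dist(x,\overline{U^k})\le(\sigma^*)^{l-k}2R$. Letting $l\to\infty$ gives $\dist(x,\overline{U^k})=0$, and as $\overline{U^k}$ is closed we conclude $x\in\overline{U^k}$; thus $F^k\subseteq\overline{U^k}$. I expect the only delicate points to be the two ``bookkeeping'' steps: verifying the iterated inclusion $\S^{k,l}(\overline{U^l})\subseteq\overline{U^k}$ (which requires care with the direction of the index running between $k$ and $l$ and the monotonicity of the $\S^j$), and confirming the iterated invariance identity $F^k=\S^{k,l}(F^l)$ so that the contraction estimate can be brought to bear. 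Neither should present a genuine obstacle.
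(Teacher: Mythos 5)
Your argument is correct and is essentially the same as the paper's: both rest on the iterated inclusion $\S^{k,l}(U^l)\subseteq U^k$ from the open-set condition, the invariance $F^k=\S^{k,l}(F^l)$, and the contraction estimate \eqref{lcontract} together with uniform boundedness to conclude $\rho_H(F^k,U^k)=0$ and hence $F^k\subseteq\overline{U^k}$. The only cosmetic difference is that you carry the closures $\overline{U^l}$ through the whole argument and finish with a pointwise distance computation, whereas the paper works with the open sets and the Hausdorff semidistance directly.
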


\begin{proof}
Let $L$ be a uniform bound on $U^k$ and $F^k$ for all $k\in\N$. Since $\S^{k,l}(U^l)\subseteq U^l$ and $\S^{k,l}(F^l)=F^k$ for $k\le l$ then
\begin{align*}
	\rho_H(F^k,U^k)&
	\le \rho_H(S^{k,l}(F^l), S^{k,l}(U^l))
	\le (\sigma^*)^{l-k} \rho_H(F^l,U^l)
	\le (\sigma^*)^{l-k} 2L.
\end{align*}
Taking $l\to\infty$ it follows that $\rho_H(F^k,U^k)=0$ and consequently $F^k\subseteq\overline{U^k}$.
\end{proof}

We now consider the case where the contractions $f_i$ are similarities with contraction ratio $\sigma_{i}$. We begin by proving Theorem \ref{theorem - homogeneous case} from the Introduction, which treats the simplest situation where all the contraction ratios are the same at each level $k$ of the iteration. Note that this class of non-autonomous iterated function systems includes those whose pullback attractors are generalised Cantor sets (see Section which appear in Olson, Robinson \& Sharples \cite{ORS}.
Intuitively, each step of the iteration corresponds to a different scale, and since all the maps contract in the same way at that scale, it is natural that the pullback attractor is equi-homogeneous.

\mainone*

\begin{proof}
Since all the hypotheses are uniform in $k$ it is sufficient to show that $F^0$ is equi-homogeneous.
Let
\[
\pi_n={\textstyle\prod_{j=1}^{n}} c_j\wwords{and}
\eta=\sup\{\,{\rm
  diam}(\overline {U^k}): k\in\N\,\}.
\]
For $\delta\le c_1 \eta$ there exists $n\ge 2$ such that $\pi_n\eta < \delta \le \pi_{n-1}\eta.$
Define
\[
\J_n= \I_1\times\cdots\times\I_{n}.
\]
From the open-set condition we have
\begin{equation}\label{oscorm1}
  f_\alpha(U^n)\cap f_\beta(U^n)=\emptyset \words{for} \alpha,\beta\in \J_n \words{with} \alpha\ne\beta
\end{equation}
and by property 2 for pullback attractors, we obtain
\begin{equation}\label{attractcorm1}
  {\textstyle \bigcup_{\alpha\in \J_n}} f_\alpha(F^n) =\S^{0,n}(F^n)=F^0.
\end{equation}

We now use \eqref{oscorm1} and \eqref{attractcorm1} to show that $F^0$ is equi-homogeneous.  Let $x\in F^0$ be arbitrary.  Then $x\in f_{\alpha}(F^n)$ for some $\alpha\in \J_n$ and consequently
\[
{\rm diam}(f_{\alpha}(F^n)) = \pi_n {\rm diam}(F^n) \le
\pi_n\eta<\delta,
\]
which implies that $f_\alpha(F^n)\subseteq B_\delta(x)$.  It follows that
\[
B_\delta(x)\cap F^0 =B_\delta(x)\cap \bigcup_{\beta\in \J_n} f_\beta(F^n) \supseteq B_\delta(x)\cap f_{\alpha}(F^n)=f_\alpha(F^n).
\]
Therefore
\[
\cover(B_\delta(x)\cap F^0,\rho) \ge \cover(f_\alpha(F^n),\rho) = \cover(F^n,\rho/\pi_n)
\]
implies that
\begin{equation}\label{rhinfm1}
\inf_{x\in F^0} \cover(B_\delta(x)\cap F^0,\rho)
	\ge \cover(F^n,\rho/\pi_n).
\end{equation}

Let $A_n=\{\,\alpha\in \J_n : B_\delta(x)\cap f_\alpha(\overline {U^n})\ne\emptyset\,\}.$
Then $\beta\in A_{n-1}$ implies that
\[
f_\beta(\overline {U^{n-1}})\subseteq B_{\delta +{\rm diam} f_\beta(\overline {U^{n-1}})}(x) \subseteq B_{2\eta\pi_{n-1}}(x).
\]
Therefore by \eqref{oscorm1} we obtain
\begin{align*}
	\lambda(B_{2\eta\pi_{n-1}}(x))&\ge\lambda\Big(\bigcup_{\beta\in A_{n-1}} f_\beta(U^{n-1})\Big)=\sum_{\beta\in A_{n-1}}\lambda\big(f_\beta(U^{n-1})\big)\\
    &= {\rm card}(A_{n-1}) (\pi_{n-1})^d \lambda(U^{n-1}) \ge {\rm card}(A_{n-1})(\pi_{n-1})^d\epsilon_0.
\end{align*}%
Consequently
\[
	{\rm card}(A_n)\le {\rm card}(\I_n){\rm card}(A_{n-1})\le M
\]
where $M=(2\eta)^d N \lambda(B_1(0))/\epsilon_0$ is independent of $n$.
Therefore
\begin{align*}
\cover(B_\delta(x)\cap F^0,\rho)&\le\sum_{\alpha\in A_n} N\big(f_\alpha(F^n),\rho\big) =\sum_{\alpha\in A_n} \cover(F^n,\rho/\pi_n)\\
     &={\rm card}(A_n) \cover(F^n,\rho/\pi_n)\le M \cover(F^n,\rho/\pi_n).
\end{align*}
Taking the supremum of $F^0$ and combining this
with \eqref{rhinfm1} we obtain
\[
\sup_{x\in F^0}
	\cover(B_\delta(x)\cap F^0,\rho)\le M \cover(F^n,\rho/\pi_n) \le \inf_{x\in F^0} \cover(B_\delta(x)\cap F^0,\rho)
\]
which completes the proof of the theorem.
\end{proof}

\begin{corollary}\label{main2good}
If $\set{F^{k}}$ is the pullback attractor of a non-autonomous iterated function system of similarities satisfying the Moran open-set condition and $\sigma_i=c_k$ for all $i\in\I_k$, and all $k\in\mathbb{N}$ and $\inf\set{\sigma_{i} : i \in\mathbb{N}}=\sigma_{*}>0$ then each set $F^{k}$ is equi-homogeneous.
\end{corollary}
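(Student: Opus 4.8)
The plan is to show that the extra hypothesis $\inf\{\sigma_i:i\in\N\}=\sigma_*>0$ forces the cardinalities ${\rm card}(\I_k)$ to be uniformly bounded, so that the corollary reduces immediately to Theorem \ref{theorem - homogeneous case}. In other words, I would not repeat the equi-homogeneity argument at all; I would merely verify the one hypothesis of Theorem \ref{theorem - homogeneous case} that is not assumed here, namely the existence of an $N$ with ${\rm card}(\I_k)\le N$ for all $k$, and then invoke that theorem directly.

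First I would extract a uniform upper bound on the measures of the Moran open sets. Since the $U^k$ are uniformly bounded there is an $R$ with $U^k\subseteq B_R(0)$ for every $k\in\N_0$, so $V\define\sup_k\lambda(U^k)\le\lambda(B_R(0))<\infty$; together with property (iii) of the Moran open-set condition this confines every $\lambda(U^k)$ to the interval $[\epsilon_0,V]$. Next I would run a single-level volume-packing estimate. Fix $k\in\N$. Property (i) (with $k$ replaced by $k-1$) gives $\bigcup_{i\in\I_k}f_i(U^k)=S^{k-1}(U^k)\subseteq U^{k-1}$, and property (ii) gives that the images $f_i(U^k)$ for $i\in\I_k$ are pairwise disjoint. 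Because the maps are similarities with $\sigma_i=c_k$ for every $i\in\I_k$, each image has the \emph{same} measure $\lambda(f_i(U^k))=c_k^d\,\lambda(U^k)$. Summing over the disjoint images and using the containment in $U^{k-1}$ yields
\[
{\rm card}(\I_k)\,c_k^d\,\lambda(U^k)=\sum_{i\in\I_k}\lambda\bigl(f_i(U^k)\bigr)\le\lambda(U^{k-1})\le V.
\]
Since $c_k=\sigma_i\ge\sigma_*$ for $i\in\I_k$ and $\lambda(U^k)\ge\epsilon_0$, rearranging gives ${\rm card}(\I_k)\le V/(\sigma_*^d\epsilon_0)$, a bound independent of $k$.

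Finally, setting $N\define V/(\sigma_*^d\epsilon_0)$ we obtain ${\rm card}(\I_k)\le N$ for all $k\in\N$, so every hypothesis of Theorem \ref{theorem - homogeneous case} is satisfied and each $F^k$ is therefore equi-homogeneous. I expect the only points requiring care to be the bookkeeping of the index convention in the Moran open-set condition (matching the superscript of $U$ to the index set $\I_k$, so that containment comes from property (i) shifted by one and disjointness from property (ii)) and the passage from uniform boundedness to the finite constant $V$; the rest is a direct comparison of $d$-dimensional Lebesgue volumes, made clean precisely because homogeneity within each level lets a single ratio $c_k$ be factored out.
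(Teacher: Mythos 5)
Your proposal is correct and follows essentially the same route as the paper: both reduce the corollary to Theorem \ref{theorem - homogeneous case} by a single-level volume-packing estimate on the disjoint images $f_i(U^k)\subseteq U^{k-1}$ to bound ${\rm card}(\I_k)$ uniformly. If anything, your bookkeeping is slightly more careful than the paper's, which writes $\lambda(f_i(U^k))\ge\sigma_*\lambda(U^k)$ where the correct scaling factor is $\sigma_i^d\ge\sigma_*^d$, exactly as you have it.
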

\begin{proof}
Let $B$ be a bounded set such that $U^k\subseteq B$ for every $k\in\N_0$.  The open-set condition then implies
\begin{align*}
      \lambda(B) &\ge \lambda(U^{k-1}) \ge \lambda({\textstyle \bigcup_{i\in\I_k}}f_i(U^k))\\
      &\ge {\textstyle \sum_{i\in\I_k}}\lambda(f_i(U^k)) \ge \sigma_*{\textstyle \sum_{i\in\I_k}}\lambda(U^k) \ge {\rm card}(I_k)\sigma_* \epsilon_0.
\end{align*}
Therefore ${\rm card}(\I_k)\le N$ where $N=\lambda(B)/(c_* \epsilon_0)$ and the result now follows from the application of Theorem \ref{theorem - homogeneous case}.
\end{proof}

We now examine the case when the $\sigma_i$ need not coincide for all the functions in each step of the iteration. This situation requires more refined analysis which we make easier by introducing the following notation.
Denote
\[
	\J^{k}={\textstyle \bigcup_{m\in\N}} \J^{k,k+m}
\wwords{where}
	\J^{k,n}=\I^{k+1}\times\cdots\times \I^{n}
\]
Given $\alpha\in\J^{k,n}$ denote $k_\alpha=k$ and $n_\alpha=n$. Note for $k_\alpha$ and $n_\alpha$ to be well defined, we assume as we may that $\I_k\cap\I_n=\emptyset$ for $k\ne n$. Our analysis will further make use of the set
\begin{equation}\label{Jdk}
	\J^k_\delta=\{\,\alpha\in\J^k: \sigma_\alpha\eta
		< \delta \le \sigma_{\alpha'}\eta\,\}.
\end{equation}
As in the proof of Theorem \ref{theorem - homogeneous case} we have the following facts:  if $\alpha,\beta\in\J^k_\delta$ then
\begin{align*}
	f_\alpha(U^{n_\alpha})\cap f_\beta(U^{n_\beta})&=\emptyset \wwords{when} \alpha\ne\beta
\intertext{and}
	{\textstyle \bigcup_{\alpha\in\J^k_\delta}} f_\alpha(F^{n_\alpha})&=F^k.
\end{align*}
We shall need, as before, an estimate on
the cardinality of
\[
	A^k_{\delta}=\{\,\alpha\in\J^k_\delta : f_\alpha(\overline{U^{n_\alpha}}) \cap B_{\delta}(x) \ne \emptyset \,\},
\]
which is provided by the following lemma.

\begin{lemma}  \label{Alemma}
If a non-autonomous iterated function system of similarities satisfies the Moran open-set condition and $\inf\set{\sigma_{i} : i \in\mathbb{N}}=\sigma_{*}>0$
then
\[
	{\rm card}(A^k_{\delta})\le \kappa_0,
\]
where $\kappa_0$ is independent of $k$ and $\delta$.
\end{lemma}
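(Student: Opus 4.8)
The plan is to mimic the volume-packing argument used in the proof of Theorem \ref{theorem - homogeneous case}, the only genuinely new ingredient being that we must control the \emph{varying} contraction ratios $\sigma_\alpha$ using the uniform lower bound $\sigma_*>0$. Throughout I fix $k$, $\delta$, and the centre $x$, and write $\eta=\sup\{\diameter(\overline{U^k}):k\in\N\}$ as before, noting that $A^k_\delta\subseteq\J^k_\delta$.

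First I would pin down the size of each block index $\alpha\in\J^k_\delta$. By the definition \eqref{Jdk} we have $\sigma_\alpha\eta<\delta\le\sigma_{\alpha'}\eta$, and since $\alpha$ is obtained from $\alpha'$ by appending a single index $i$ with $\sigma_i\ge\sigma_*$ we have $\sigma_\alpha=\sigma_{\alpha'}\sigma_i\ge\sigma_*\sigma_{\alpha'}$. Combining these yields the two-sided bound
\[
\frac{\sigma_*\delta}{\eta}\le\sigma_\alpha<\frac{\delta}{\eta}
\qquad\text{for every }\alpha\in\J^k_\delta.
\]
It is the lower bound here that genuinely needs the hypothesis $\sigma_*>0$, and this is the crux of the lemma: it is what ultimately makes the final count independent of $\delta$.

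Next I would localise the relevant images and run the packing estimate. For $\alpha\in A^k_\delta$ the set $f_\alpha(\overline{U^{n_\alpha}})$ is a copy of $\overline{U^{n_\alpha}}$ scaled by $\sigma_\alpha$, so $\diameter\big(f_\alpha(\overline{U^{n_\alpha}})\big)=\sigma_\alpha\diameter(\overline{U^{n_\alpha}})\le\sigma_\alpha\eta<\delta$; since this set also meets $B_{\delta}(x)$, it is contained in $B_{2\delta}(x)$. The sets $\{f_\alpha(U^{n_\alpha}):\alpha\in\J^k_\delta\}$ are pairwise disjoint (one of the facts recorded before the lemma), and each satisfies $\lambda(f_\alpha(U^{n_\alpha}))=\sigma_\alpha^d\lambda(U^{n_\alpha})\ge(\sigma_*\delta/\eta)^d\epsilon_0$ by property (iii) of the Moran open-set condition together with the lower bound on $\sigma_\alpha$. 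Summing over the disjoint copies contained in $B_{2\delta}(x)$ gives
\[
(2\delta)^d\lambda(B_1(0))=\lambda(B_{2\delta}(x))\ge\sum_{\alpha\in A^k_\delta}\lambda\big(f_\alpha(U^{n_\alpha})\big)\ge{\rm card}(A^k_\delta)\Big(\frac{\sigma_*\delta}{\eta}\Big)^d\epsilon_0,
\]
and the factors of $\delta^d$ cancel, leaving ${\rm card}(A^k_\delta)\le 2^d\eta^d\lambda(B_1(0))/(\sigma_*^d\epsilon_0)=:\kappa_0$, which is manifestly independent of both $k$ and $\delta$.

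The argument is essentially routine once the setup is in place; the only real obstacle is the first step, and more precisely recognising that the definition of $\J^k_\delta$ combined with $\sigma_*>0$ forces $\sigma_\alpha$ to be comparable to $\delta$. Without a uniform lower bound on the ratios the images $f_\alpha(U^{n_\alpha})$ could be arbitrarily small relative to $\delta$, arbitrarily many of them could then pack into $B_{2\delta}(x)$, and no uniform $\kappa_0$ would exist — which is exactly why the hypothesis $\sigma_*>0$ is indispensable to the statement.
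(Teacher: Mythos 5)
Your argument is correct and follows essentially the same volume-packing route as the paper: both proofs enclose the disjoint images $f_\alpha(U^{n_\alpha})$ in $B_{2\delta}(x)$, bound each of their measures below via $\sigma_\alpha\ge\sigma_*\sigma_{\alpha'}\ge\sigma_*\delta/\eta$ together with $\lambda(U^{n_\alpha})\ge\epsilon_0$, and compare with $\lambda(B_{2\delta}(x))$, arriving at the same constant $\kappa_0=\lambda(B_1(0))(2\eta/\sigma_*)^d/\epsilon_0$. Your write-up is in fact slightly more explicit than the paper's, since you justify the inclusion $f_\alpha(\overline{U^{n_\alpha}})\subseteq B_{2\delta}(x)$ via the diameter bound rather than asserting it.
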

\begin{proof}
For $\alpha\in A^k_{\delta}$ we have
$f_\alpha(\overline{U^{n_\alpha}})\subseteq B_{2\delta}(x).$
Therefore
\begin{align*}
	\lambda(B_{2\delta}(x)) \ge \lambda\Big({\textstyle \bigcup_{\alpha\in A^k_{\delta}}} f_\alpha(U^{n_\alpha})\Big)
	&={\textstyle \sum_{\alpha\in A^k_{\delta}}}\lambda\big(f_\alpha(U^{n_\alpha})\big)
\\
	&\geq {\textstyle \sum_{\alpha\in A^k_{\delta}}}\sigma_{\alpha}^{d}
	\epsilon_{0}\\
	&\geq {\textstyle \sum_{\alpha\in A^k_{\delta}}}\left(\sigma_{\alpha^{\prime}}\sigma_{*}\right)^{d}
	\epsilon_{0}
	\geq {\rm card}(A^k_{\delta}) (\delta\sigma_*/\eta)^d\epsilon_0
\end{align*}
implies
\[
	{\rm card}(A^k_{\delta}) \le \lambda(B_1(0))(2\eta)/\sigma_*)^d/\epsilon_0 =\kappa_0
\]
where $\kappa_0= \lambda(B_1(0)) (2\eta/\sigma_*)^d/\epsilon_0$.
\end{proof}

We also need a bound on the cardinality of $\J^k_\delta$ (as defined in \eqref{Jdk}). In order to obtain this bound we make the uniformity assumption that for some $s>0$
\begin{equation}\label{hippohere}
	{\textstyle \sum_{i\in \I_k}} \sigma_i^s=1
	\wwords{for all} k\in\N
\end{equation}
(this was \eqref{hippo} in the Introduction) and consider a sequence of probability measures $\mu^k$ with support on $F^k$ defined by the relationships
\[
	\mu^k(f_i(B))= \mu^{k+1}(B)
	\frac{{\rm diam}(f_i(B))^s}{\sum_{j\in\I_{k+1}} {\rm diam}(f_j(B))^s}
	= \mu^{k+1}(B) \sigma_i^s
\]
where $k\in\N_0$, $i\in\I_{k+1}$ and $B$ is any Borel set. Note for $\alpha\in\J^{k,n}$ that
\[
	\mu^k(f_\alpha(F^n))=\mu_n(F^n)(\sigma_\alpha)^s=
		(\sigma_\alpha)^s.
\]
We are now ready to estimate ${\rm card}(\J^k_\delta)$.

\begin{lemma}\label{Jlemma}
If $\set{F^{k}}$ is the pullback attractor of a non-autonomous iterated function system of similarities satisfying the Moran open-set condition, $\inf\set{\sigma_{i} : i \in\mathbb{N}}=\sigma_{*}>0$ and \eqref{hippohere} then
\[
	\kappa_1 \delta^{-s}\le {\rm card}(\J^k_{\delta})\le
	\kappa_2 \delta^{-s},
\]
where $\kappa_1$ and $\kappa_2$ are independent of $k$ and $\delta$.
\end{lemma}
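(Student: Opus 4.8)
The plan is to transfer the counting problem to the measure $\mu^k$, for which $\mu^k(f_\alpha(F^{n_\alpha}))=(\sigma_\alpha)^s$, and then to exploit the two structural facts recorded just before the lemma: the pieces $\{\,f_\alpha(F^{n_\alpha}):\alpha\in\J^k_\delta\,\}$ cover $F^k$, while the open images $f_\alpha(U^{n_\alpha})$ are pairwise disjoint. The idea is that these facts pin the total $\mu^k$-mass of the pieces to exactly $1$, after which uniform two-sided bounds on the individual masses $(\sigma_\alpha)^s$ convert directly into the desired bounds on $\operatorname{card}(\J^k_\delta)$.

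First I would record two-sided bounds on $\sigma_\alpha$ for $\alpha\in\J^k_\delta$. By the definition \eqref{Jdk} we have $\sigma_\alpha\eta<\delta\le\sigma_{\alpha'}\eta$, so at once $\sigma_\alpha<\delta/\eta$. For the matching lower bound, write $i$ for the final index of $\alpha$, so that $\sigma_\alpha=\sigma_{\alpha'}\sigma_i\ge\sigma_{\alpha'}\sigma_*\ge(\delta/\eta)\sigma_*$, using $\sigma_i\ge\sigma_*$ and $\sigma_{\alpha'}\ge\delta/\eta$. Raising to the power $s$ gives
\[
\Big(\tfrac{\sigma_*\delta}{\eta}\Big)^s\le(\sigma_\alpha)^s<\Big(\tfrac{\delta}{\eta}\Big)^s
\qquad\text{for every }\alpha\in\J^k_\delta.
\]

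Next I would establish $\sum_{\alpha\in\J^k_\delta}(\sigma_\alpha)^s=1$. Since $\bigcup_{\alpha\in\J^k_\delta}f_\alpha(F^{n_\alpha})=F^k$, subadditivity of $\mu^k$ together with $\mu^k(f_\alpha(F^{n_\alpha}))=(\sigma_\alpha)^s$ yields $\sum_{\alpha}(\sigma_\alpha)^s\ge\mu^k(F^k)=1$. For the reverse inequality I would use that the $f_\alpha(U^{n_\alpha})$ are pairwise disjoint and that $F^{n_\alpha}\subseteq\overline{U^{n_\alpha}}$ by Lemma \ref{finubar}, so distinct pieces $f_\alpha(F^{n_\alpha})$ meet only in $\mu^k$-null boundary sets; hence the masses add and $\sum_{\alpha}(\sigma_\alpha)^s\le\mu^k(F^k)=1$. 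Combining the two inequalities gives the claimed identity.

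Finally I would combine the displayed term-bounds with this identity. Summing the lower term-bound over $\J^k_\delta$ gives $1\ge{\rm card}(\J^k_\delta)(\sigma_*\delta/\eta)^s$, whence ${\rm card}(\J^k_\delta)\le(\eta/\sigma_*)^s\delta^{-s}$; summing the upper term-bound gives $1<{\rm card}(\J^k_\delta)(\delta/\eta)^s$, whence ${\rm card}(\J^k_\delta)\ge\eta^s\delta^{-s}$. Thus one may take $\kappa_1=\eta^s$ and $\kappa_2=(\eta/\sigma_*)^s$, both independent of $k$ and $\delta$. The one step requiring care is the measure-disjointness giving $\sum_\alpha(\sigma_\alpha)^s\le1$: this is precisely where the Moran open-set condition (disjointness of the $f_\alpha(U^{n_\alpha})$) and the containment $F^{n_\alpha}\subseteq\overline{U^{n_\alpha}}$ are used to guarantee that $\mu^k$ places no mass on the overlaps of the closed pieces. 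Equivalently, since every infinite word through the tree $\J^k$ has a unique prefix in $\J^k_\delta$ and (because each contraction ratio is bounded above by some $\sigma^*<1$) these prefixes have uniformly bounded length, the set $\J^k_\delta$ is a finite cut set and the conservation relation $\sum_{i\in\I_{n+1}}\sigma_i^s=1$ from \eqref{hippohere} propagates the total mass $1$ down to it.
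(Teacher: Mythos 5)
Your proof is correct, and for the upper bound on ${\rm card}(\J^k_\delta)$ it takes a genuinely different route from the paper's. The paper uses only subadditivity of $\mu^k$, which gives $\sum_{\alpha\in\J^k_\delta}(\sigma_\alpha)^s\ge 1$ and hence the lower cardinality bound exactly as you do; but for the upper bound it does \emph{not} establish $\sum_{\alpha}(\sigma_\alpha)^s\le 1$. Instead it invokes Lemma \ref{Alemma} to greedily extract a pairwise disjoint subfamily of the pieces $f_\alpha(F^{n_\alpha})$ of size at least ${\rm card}(\J^k_\delta)/\kappa_0$ and applies additivity only to that subfamily, which costs a factor $\kappa_0$ and yields $\kappa_2=\kappa_0(\eta/\sigma_*)^s$ versus your $(\eta/\sigma_*)^s$. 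Your exact identity $\sum_{\alpha\in\J^k_\delta}(\sigma_\alpha)^s=1$ is true and gives a cleaner argument with better constants, bypassing Lemma \ref{Alemma} entirely for this lemma. One caveat: of the two justifications you offer for $\sum_{\alpha}(\sigma_\alpha)^s\le 1$, the measure-theoretic one is the weaker. The assertion that distinct closed pieces meet only in $\mu^k$-null sets does not follow directly from disjointness of the open images $f_\alpha(U^{n_\alpha})$ together with $F^{n_\alpha}\subseteq\overline{U^{n_\alpha}}$ --- a priori $\mu^k$ could charge the boundaries $\partial f_\alpha(U^{n_\alpha})$, and the standard proof that these overlaps are null \emph{deduces} it from equality in subadditivity once $\sum(\sigma_\alpha)^s=1$ is already known, so as stated that route is close to circular. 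Your second, combinatorial justification is the one to rely on: $\J^k_\delta$ is a finite maximal cut set of uniformly bounded depth (this is where $\sigma^*<1$ is used), and repeated application of \eqref{hippohere} propagates the total weight $1$ from the root down to any such cut set, with no measure theory required at all.
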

\begin{proof}
For the lower bound we estimate
\begin{align*}
	1=\mu^k(F^k)
	&=
	\mu^k
		\Big(
		{\textstyle \bigcup_{\alpha\in\J^k_{\delta}}}
		f_\alpha(F^{n_\alpha}) \Big)
		\le
		{\textstyle \sum_{\alpha\in\J^k_{\delta}}}
		\mu^k\big(
		f_\alpha(F^{n_\alpha}) \big)\\
		&=
		{\textstyle \sum_{\alpha\in\J^k_{\delta}}}
		(\sigma_\alpha)^s
		<
		{\textstyle \sum_{\alpha\in\J^k_{\delta}}}
		(\delta/\eta)^s
		= {\rm card}(\J^k_\delta) (\delta/\eta)^s.
\end{align*}
Therefore
\[
	{\rm card}(\J^k_\delta) >\kappa_1\delta^{-s}
\]
where $\kappa_1=\eta^s$.  For the upper bound we will use Lemma \ref{Alemma} to count the non-empty intersections where
\[
f_\alpha(F^{n_\alpha})\cap f_\beta(F^{n_\beta})\ne \emptyset \wwords{and} \beta\ne\alpha.
\]
We will do this inductively. Let $J_1=\J^k_\delta$ and pick $\alpha_1\in J_1$.
Define
\[
	J_{i+1}=\{\, \beta\in J_i :
		f_{\alpha_i}(F^{n_{\alpha_i}})
		\cap f_\beta(F^{n_\beta})= \emptyset
	\,\}.
\]
Since $f_{\alpha_k}(F^{n_{\alpha_k}})\subseteq B_\delta(x)$ for some $x$ it follows from Lemma \ref{Alemma} that
\[
	{\rm card}(J_{i+1})\ge {\rm card}(J_i) -
		{\rm card}(A^k_\delta)
		\ge {\rm card}(J_i)-\kappa_0
		\ge {\rm card}(J_1)-i\kappa_0.
\]
We can continue choosing $a_{i+1}\in J_{i+1}$ until $i=i_0$ where
\[
(i_0-1)\kappa_0< {\rm card}(J_1)\le i_0\kappa_0.
\]
By construction, it follows that
\[
	f_{\alpha_i}(F^{n_{\alpha_i}})\cap
	f_{\alpha_j}(F^{n_{\alpha_j}})=\emptyset
\wwords{for}
	i\ne j.
\]
Therefore
\begin{align*}
	1=\mu^k(F^k)
	&\ge \mu^k\Big(
		{\textstyle\bigcup_{i=1}^{i_0}} f_{\alpha_i}(F^{n_{\alpha_i}})
		\Big)
	=
		{\textstyle\sum_{i=1}^{i_0}} \mu^k\big(
			f_{\alpha_i}(F^{n_{\alpha_i}})\big)\\
	&=
		{\textstyle\sum_{i=1}^{i_0}} (\sigma_{\alpha_i})^s
	\ge
		{\textstyle\sum_{i=1}^{i_0}} (\sigma_* \delta/\eta)^s
	\ge  {\rm card}(\J^k_\delta) (\sigma_*\delta/\eta)^s/\kappa_0.
\end{align*}%
It follows that
\[
	{\rm card}(\J^k_\delta)\le \kappa_0 (\eta/\sigma_*)^s \delta^{-s}.
\]
Taking $\kappa_2=\kappa_0 (\eta/\sigma_*)^s$ finishes the proof.
\end{proof}

We are now ready to prove sufficient conditions for the equi-homogeneity of pullback attractors in the case when the contraction ratios need not coincide within each stage of the iteration. First we prove Theorem \ref{theorem - general} from the introduction, which has a uniformity assumption on the contraction ratios.

\mainthree*

\begin{proof}
We first estimate 
\[
\inf_{x\in F^0}
	\cover(F^0\cap B_\delta(x),\rho).
\]
Let $x\in F_0$ be arbitrary. There is an $\alpha\in \J^0_\delta$ such that $x\in f_\alpha(F^{n_\alpha})$ and consequently $F^0\cap B_\delta(x)\supseteq f_\alpha(F^{n_\alpha})$.
Define
\[
	\tilde A^k_\delta=\{\, \alpha\in \J^k_\delta :
		f_\alpha(\overline{U^{\alpha_k}})
	\cap B_{2\delta}(x)\ne \emptyset\,\}.
\]
Following the same proof as in Lemma \ref{Alemma} there exists $\tilde\kappa_0$ which is independent of $\delta$ and $k$ such that ${\rm card} (\tilde A^k_\delta)\le\tilde \kappa_0$.
Following the same proof as in Lemma \ref{Jlemma} we can find a sequence $\gamma_i\in \J^{n_\alpha}_{\rho/\sigma_\alpha}$ up to $i=\tilde \imath_0$ where
\[
	(\tilde \imath_0 -1)\tilde\kappa_0
	<{\rm card}(\J^{n_\alpha}_{\rho/\sigma_\alpha})
	\le \tilde \imath_0\tilde\kappa_0
\]
such that
$f_{\gamma_i}(F^{n_{\gamma_i}})\subseteq B_\delta(x_i)$ for $x_i\in f_{\gamma_i}(F^{n_{\gamma_i}})$ and
\[
B_{2\delta(x_i)}\cap F_{\gamma_j}(F^{n_{\gamma_j}})=\emptyset \wwords{for} i\ne j.
\]
In particular, we have found $x_i\in F^{n_\alpha}$ such that
\[
B_\delta(x_i)\cap B_\delta(x_j)=\emptyset
\wwords{for} i\ne j.
\]
It follows that
\begin{align*}
\pack(F^0\cap &B_\delta(x),\rho)
	\ge \pack(f_\alpha(F^{n_\alpha}),\rho)
	=\pack(F^{n_\alpha},\rho/\sigma_\alpha)
	\ge \tilde \imath_0\\
	&\ge {\rm card}(\J^{n_\alpha}_{\rho/\sigma_\alpha})/\tilde k_0
	\ge (\kappa_1/\tilde\kappa_0) (\sigma_\alpha/\rho)^s
	\ge (\kappa_1/\tilde\kappa_0)\sigma_*^s (\delta/\rho)^s
\end{align*}%
Therefore
\[
	\inf_{x\in F^0}
	\cover(F^0\cap B_\delta(x),\rho)\ge \kappa_3 (\delta/\rho)^s
\]
where $\kappa_3=\kappa_1 (2\sigma_*)^s/\tilde \kappa_0$.

We now estimate 
\[
\sup_{x\in F^0} \cover(F^0\cap B_\delta(x),\rho).
\]
  Let $x\in F^0$.  Applying Lemma~\ref{Jlemma} and Lemma~\ref{Alemma} we obtain
\begin{align*}
	\cover(F^0\cap B_\delta(x),\rho)
	&\le {\textstyle \sum_{\beta\in A^0_\delta}}
	\cover(f_\beta(F^{n_\beta}),\rho)
	={\textstyle \sum_{\beta\in A^0_\delta}}
	\cover(F^{n_\beta},\rho/\sigma_\beta)\\
	&={\textstyle \sum_{\beta\in A^0_\delta}}
	\cover\Big({\textstyle
		\bigcup_{\gamma\in\J^{n_\beta}_{\rho/\gamma_\beta}}}
		f_\gamma(F^{n_\gamma}),\rho/\sigma_\beta\Big)\\
	&\le {\textstyle \sum_{\beta\in A^0_\delta}}
{\textstyle \sum_{\gamma\in\J^{n_\beta}_{\rho/\gamma_\beta}}}
	\cover\big( F^{n_\gamma},\rho/(\sigma_\beta\sigma_\gamma)\big)\\
	&= {\textstyle \sum_{\beta\in A^0_\delta}}
{\textstyle \sum_{\gamma\in\J^{n_\beta}_{\rho/\gamma_\beta}}}
	\cover\big( F^{n_\gamma},\eta\big)\\
	&\le {\textstyle \sum_{\beta\in A^0_\delta}}
		{\rm card}(\J^{n_\beta}_{\rho/\gamma_\beta})
	\le {\textstyle \sum_{\beta\in A^0_\delta}}
		\kappa_2 (\sigma_\beta/\rho)^s\\
	&\le \kappa_0\kappa_2\eta^{-s} (\delta/\rho)^s.
\end{align*}
Taking $\kappa_4=\kappa_0\kappa_2\eta^{-s}$ and $M=\kappa_4/\kappa_3$ yields
\[
	\sup_{x\in F^0} \cover(F^0\cap B_\delta(x),\rho)
		\le \kappa_4(\delta/\rho)^s
		\le M
	\inf_{x\in F^0} \cover(F^0\cap B_\delta(x),\rho).
\]
We finish by noting that the above inequality also shows $\dim_{\rm A}F^k=s$.
\end{proof}

It is worth remarking that the proof of Theorem \ref{theorem - general}, in addition to proving that $F^k$ is equi-homogeneous, also shows that $F^k$ attains its upper and lower box-counting dimensions.
Thus, Theorem \ref{theorem - equi-homogeneous Assouad equal to box} implies that $\dim_{\rm B}F^k=\dim_{\rm A}F^{k}$ for all $k$.

For the final result in this section we note that the hypothesis \eqref{hippo} on $s$ in Theorem~\ref{theorem - general} can be weakened without changing the details of the proof, which is the content of Theorem \ref{theorem - general averaged} from the Introduction.

\mainfour*

Note that if $\sup\{\,\sigma_i : i\in\N\,\}=\sigma^{*}<1$ then an $s$ that satisfies \eqref{ahippo} is unique. Indeed, suppose $s_0$ satisfies \eqref{ahippo} and $\delta\ne 0$.
If $\delta>0$ then
\[
	{\textstyle\sum_{\alpha\in\J_{k,k+n}}}
\sigma_\alpha^{s_0+\delta}
		\le
(\sigma^*)^{\delta n}
	{\textstyle\sum_{\alpha\in\J_{k,k+n}}}
\sigma_\alpha^{s_0}
	\le (\sigma^*)^{\delta n} L\to 0
\words{as}
	n\to\infty
\]
shows that the lower bound in \eqref{ahippo} could not hold for $s=s_0+\delta$.  On the other hand, if $\delta<0$ then
\[
	{\textstyle\sum_{\alpha\in\J_{k,k+n}}}
\sigma_\alpha^{s_0+\delta}
		\ge
(\sigma^*)^{\delta n}
	{\textstyle\sum_{\alpha\in\J_{k,k+n}}}
\sigma_\alpha^{s_0}
	\ge (\sigma^*)^{\delta n} L\to \infty
\words{as}
	n\to\infty
\]
shown that the upper bound could not hold.  We conclude that there is at most one value for $s$ such that \eqref{ahippo} holds.

The bounds \eqref{ahippo} essentially state that \eqref{hippo} holds uniformly when averaged over long enough
sequences of iterations.

\section{Conclusion}
We have demonstrated that the equi-homogeneous sets include a large class of attractors of iterated functions systems, both autonomous and non-autonomous, in addition to the generalised Cantor sets and homogeneous Moran sets considered in Olson, Robinson and Sharples \cite{ORS}. Further, as equi-homogeneous sets have identical dimensional detail at all points at each fixed length scale, we have shown that the calculation of their Assouad dimensions can be much simplified. Finally, we have demonstrated that equi-homogeneity is independent of any previously defined notion of dimensional equivalence, establishing equi-homogeneity as a novel and useful tool in the analysis of fractal sets.

\bibliographystyle{abbrv}
\bibliography{references.bib}

\end{document}